\DeclareMathOperator{\gr}{graph}
\renewcommand{\epsilon}{\varepsilon}
\newcommand{\sub}[1]{_{\textup{\tiny{\fontfamily{cmr}\selectfont #1}}}}
\newcommand{\uhr}{\upharpoonright}
\renewcommand{\phi}{\varphi}
\renewcommand{\geq}{\geqslant}
\renewcommand{\leq}{\leqslant}
\renewcommand{\nleq}{\nleqslant}
\newcommand{\open}[1]{\llbracket#1\rrbracket}
\newcommand{\converges}{\mathord{\downarrow}}
\newcommand{\diverges}{\mathord{\uparrow}}
\newtheorem{thm}{Theorem}[section]
\newtheorem{prop}[thm]{Proposition}
\newtheorem{lem}[thm]{Lemma}
\newtheorem{cor}[thm]{Corollary}
\theoremstyle{definition}
\newtheorem{defn}[thm]{Definition}
\newtheorem{oq}[thm]{Open Question}
\begin{document}

\title[Coarse Reducibility and Algorithmic Randomness]{Coarse
Reducibility and \\ Algorithmic Randomness}

\date{\today}

\author[D. R. Hirschfeldt]{Denis R. Hirschfeldt}

\address{Department of Mathematics\\ University of Chicago}

\email{drh@math.uchicago.edu}

\thanks{Hirschfeldt was partially supported by grant
DMS-1101458 from the National Science Foundation of the United
States.}

\author[C. G. Jockusch, Jr.]{Carl G. Jockusch, Jr.}

\address{Department of Mathematics\\University of Illinois at
Urbana-Cham\-paign}

\email{jockusch@math.uiuc.edu}

\author[R. Kuyper]{Rutger Kuyper}

\thanks{Kuyper's research was supported by NWO/DIAMANT grant
613.009.011 and by John Templeton Foundation grant 15619: ``Mind,
Mechanism and Mathematics: Turing Centenary Research Project''.}

\address{Department of Mathematics\\Radboud University Nijmegen}

\email{r.kuyper@math.ru.nl}

\author[P. E. Schupp]{Paul E. Schupp}

\address{Department of Mathematics\\University of Illinois at
Urbana-Cham\-paign}

\email{schupp@illinois.edu}

\keywords{Coarse reducibility, algorithmic randomness, $K$-triviality}

\subjclass[2010]{Primary 03D30; Secondary 03D28, 03D32}

\begin{abstract}
\vspace{1.2cm} 
A \emph{coarse description} of a set $A \subseteq \omega$ is a set $D
\subseteq \omega$ such that the symmetric difference of $A$ and $D$
has asymptotic density $0$. We study the extent to which noncomputable
information can be effectively recovered from all coarse descriptions
of a given set $A$, especially when $A$ is effectively random in some
sense. We show that if $A$ is $1$-random and $B$ is computable from
every coarse description $D$ of $A$, then $B$ is $K$-trivial, which
implies that if $A$ is in fact weakly $2$-random then $B$ is
computable. Our main tool is a kind of compactness theorem for
cone-avoiding descriptions, which also allows us to prove the same
result for $1$-genericity in place of weak $2$-randomness. In the
other direction, we show that if $A \leq\sub{T} \emptyset'$ is a
$1$-random set, then there is a noncomputable c.e.\ set computable
from every coarse description of $A$, but that not all $K$-trivial
sets are computable from every coarse description of some $1$-random
set.  We study both uniform and nonuniform  notions of coarse reducibility. 
A set  $Y$ is \emph{uniformly coarsely reducible} to $X$
 if there is a Turing functional $\Phi$ such that if
$D$ is a coarse description of $X$, then $\Phi^D$ is a coarse
description of $Y$.  A set  $B$ is \emph{nonuniformly coarsely reducible} to $A$
if every coarse description of $A$ computes a coarse description of
$B$.   We  show that a certain natural embedding of the Turing degrees into the
  coarse degrees (both uniform and nonuniform) is not surjective. 
We also show that if
two sets are mutually weakly $3$-random, then their coarse
degrees form a minimal pair, in both the uniform and nonuniform cases,
but that the same is not true of every
pair of relatively $2$-random sets, at least in the nonuniform coarse
degrees.
\end{abstract}

\maketitle

\section{Introduction}

There are many natural problems with high worst-case complexity that
are nevertheless easy to solve in most instances. The notion of
``generic-case complexity''  was introduced by Kapovich, Myasnikov, Schupp,
and Shpilrain \cite{KMSS} as a notion that is more  tractable than  average-case
complexity but  still allows a  somewhat nuanced analysis of such problems.
That paper also introduced the idea of  generic computability, which 
captures the idea of having a partial algorithm that  correctly
computes $A(n)$ for ``almost all'' $n$, while never giving an
incorrect answer. Jockusch and Schupp \cite{JS} began the general computability
theoretic investigation of generic computability and also defined the idea of
coarse computability,  which   captures the idea
of having a total algorithm that always answers and  may make mistakes, but correctly computes $A(n)$ for
``almost all'' $n$.  We are here concerned with this latter
concept. We first need a good notion of ``almost all''
natural numbers.

\begin{defn}
Let $A \subseteq \omega$. The \emph{density of $A$ below $n$}, denoted by
$\rho_n(A)$, is $\frac{|A \uhr n|}{n}$. The \emph{upper density}
$\overline{\rho}(A)$ of $A$ is $\limsup_n \rho_n(A)$. The
\emph{lower density} $\underline{\rho}(A)$ of $A$ is $\liminf_n
\rho_n(A)$. If $\overline{\rho}(A)=\underline{\rho}(A)$ then we call
this quantity the \emph{density} of $A$, and denote it by $\rho(A)$.

We say that $D$ is a \emph{coarse description} of $X$ if $\rho(D
\triangle X)=0$, where $\triangle$ denotes symmetric difference. A set
$X$ is \emph{coarsely computable} if it has a computable coarse
description.
\end{defn}

This idea leads to  natural notions of reducibility.

\begin{defn}
We say that $Y$ is \emph{uniformly coarsely reducible} to $X$, and write $Y
\leq\sub{uc} X$, if there is a Turing functional $\Phi$ such that if
$D$ is a coarse description of $X$, then $\Phi^D$ is a coarse
description of $Y$. This reducibility induces an equivalence relation
$\equiv\sub{uc}$ on $2^\omega$. We call the equivalence class of $X$
under this relation the \emph{uniform coarse degree} of $X$.
\end{defn}

Uniform coarse reducibility, generic reducibility (defined in \cite{JS}), and
several related reducibilities have been termed \emph{notions of
robust information coding} by Dzhafarov and Igusa \cite{DI}. Work on
such notions has mainly  focused on their uniform versions. (One
exception is a  result on nonuniform ii-reducibility in Hirschfeldt
and Jockusch \cite{HJ}.)  However, their nonuniform versions also seem
to be of interest. In particular, we will  work with  the following
nonuniform version of coarse reducibility.

\begin{defn}
We say that $Y$ is \emph{nonuniformly coarsely reducible} to $X$, and
write $Y \leq\sub{nc} X$, if every coarse description of $X$ computes
a coarse description of $Y$. This reducibility induces an equivalence
relation $\equiv\sub{nc}$ on $2^\omega$. We call the equivalence class
of $X$ under this relation the \emph{nonuniform coarse degree} of $X$.
\end{defn}

Note that the coarsely computable sets form the least degree in both
the uniform and nonuniform coarse degrees. Uniform coarse reducibility
clearly implies nonuniform coarse reducibility. We will show in the
next section that, as one might expect, the converse fails.  The
development of the theory of notions of robust information coding and
related concepts have led to interactions with computability theory
(as in Jockusch and Schupp \cite{JS}; Downey, Jockusch, and Schupp
\cite{DJS}; Downey, Jockusch, McNicholl, and Schupp \cite{DJMS}; and
Hirschfeldt, Jockusch, McNicholl, and Schupp \cite{HJMS}), reverse
mathematics (as in Dzhafarov and Igusa \cite{DI} and Hirschfeldt and
Jockusch \cite{HJ}), and algorithmic randomness (as in Astor
\cite{A}).

In this paper, we investigate connections between coarse reducibility
and algorithmic randomness. In Section \ref{embed}, we describe
natural embeddings of the Turing degrees into the uniform and 
nonuniform coarse degrees, and discuss some of their basic
properties. In Section \ref{Ktriv}, we show that no weakly $2$-random
set can be in the images of these embeddings by showing that if $X$ is
weakly $2$-random and $A$ is noncomputable, then there is some coarse
description of $X$ that does not compute $A$. More generally, we show
that if $X$ is $1$-random and $A$ is computable from every coarse
description of $X$, then $A$ is $K$-trivial. Our main tool is a kind
of compactness theorem for cone-avoiding descriptions. We also show
that there do exist noncomputable sets computable from every coarse
description of some $1$-random set, but that not all $K$-trivial sets
have this property. In Section \ref{further}, we give further examples
of classes of sets that cannot be in the images of our embeddings. In
Section \ref{minpairs}, we show that if two sets are relatively weakly
$3$-random then their coarse degrees form a minimal pair, in both the
uniform and nonuniform cases,
but that, at least for the nonuniform coarse degrees, the same is not
true of every pair of relatively $2$-random sets. These results are
analogous to the fact that, for the Turing degrees, two relatively
weakly $2$-random sets always form a minimal pair, but two relatively
$1$-random sets may not. In Section \ref{questions}, we conclude with
a few open questions.

We assume familiarity with basic notions of computability theory (as
in \cite{So}) and algorithmic randomness (as in \cite{DH} or
\cite{N}). For $S \subseteq 2^{<\omega}$, we write $\open{S}$ for the
open subset of $2^\omega$ generated by $S$; that is, 
$\open{S} = \{X : \exists n \,(X \uhr n \in S)\}$. 
We denote the uniform measure on $2^\omega$ by $\mu$.

\section{Coarsenings and embeddings of the Turing degrees}
\label{embed}

We can embed the Turing degrees into both the uniform  and
nonuniform coarse degrees, and our first connection between coarse
computability and algorithmic randomness  comes  from considering such
embeddings. While there may be several ways to define such embeddings,
a natural way to proceed is to define a map $\mathcal{C} : 2^\omega \rightarrow
2^\omega$ such that $\mathcal{C}(A)$ contains the same information as $A$, but
coded in a ``coarsely robust'' way. That is, we would like $\mathcal{C}(A)$ to
be computable from $A$, and $A$ to be computable from any coarse
description of $\mathcal{C}(A)$.

In the case of the uniform coarse degrees, one might think that the
latter reduction should be uniform, but that condition would be too
strong: If $\Gamma^D=A$ for every coarse description $D$ of $\mathcal
C(A)$ then $\Gamma^\sigma(n)\converges \; \Rightarrow \;
\Gamma^\sigma(n)=A(n)$ (since every string can be extended to a coarse
description of $\mathcal C(A)$), which, together with the fact that for
each $n$ there is a $\sigma$ such that $\Gamma^\sigma(n)\converges$,
implies that $A$ is computable. Thus we relax the uniformity condition
slightly in the following definition.

\begin{defn}
\label{coarseningdefn}
A map $\mathcal{C} : 2^\omega \rightarrow 2^\omega$ is a \emph{coarsening} if
for each $A$ we have $\mathcal{C}(A) \leq\sub{T} A$, and for each coarse
description $D$ of $\mathcal{C}(A)$, we have $A \leq\sub{T} D$. A coarsening $\mathcal{C}$
is \emph{uniform} if there is a binary Turing functional $\Gamma$ with
the following properties for every coarse description $D$ of
$\mathcal{C}(A)$:
\begin{enumerate}[\rm 1.]

\item  $\Gamma^D$ is total. 

\item  Let $A_s(n)=\Gamma^D(n,s)$. Then $A_s=A$ for cofinitely many $s$.
\end{enumerate}

\end{defn}

\begin{prop}\label{coarseningsprop}
Let $\mathcal{C}$ and $\mathcal{F}$ be coarsenings and $A$ and $B$ be sets. Then
\begin{enumerate}[\rm 1.]

\item $B \leq\sub{T} A$ if and only if $\mathcal{C}(B) \leq\sub{nc} \mathcal{C}(A)$.

\item If  $\mathcal{C}$ is uniform then $B \leq\sub{T} A$ if and only if $\mathcal{C}(B) \leq\sub{uc}
\mathcal{C}(A)$.

\item $\mathcal{C}(A) \equiv\sub{nc} \mathcal{F}(A)$, and

\item if $\mathcal{C}$ and $\mathcal{F}$ are both uniform then $\mathcal{C}(A) \equiv\sub{uc} \mathcal{F}(A)$.

\end{enumerate}
\end{prop}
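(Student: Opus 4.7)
The plan is to prove clauses (1) and (3) by a direct appeal to the coarsening property, and to deduce the forward directions of clauses (2) and (4) from a single uniform construction built out of clause (2) of Definition \ref{coarseningdefn}; the backward directions then fall out because $\leq\sub{uc}$ implies $\leq\sub{nc}$. For (1) forward, if $B \leq\sub{T} A$ and $D$ is any coarse description of $\mathcal{C}(A)$, then $A \leq\sub{T} D$ by the coarsening property, so $\mathcal{C}(B) \leq\sub{T} B \leq\sub{T} A \leq\sub{T} D$, and hence $\mathcal{C}(B)$ itself is a $D$-computable coarse description of $\mathcal{C}(B)$. Conversely, $\mathcal{C}(A)$ is trivially a coarse description of itself, so $\mathcal{C}(B) \leq\sub{nc} \mathcal{C}(A)$ yields a coarse description $D$ of $\mathcal{C}(B)$ with $D \leq\sub{T} \mathcal{C}(A) \leq\sub{T} A$, and a second application of the coarsening property gives $B \leq\sub{T} D \leq\sub{T} A$. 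Clause (3) follows by the same pattern: any coarse description of $\mathcal{F}(A)$ computes $A$ and hence $\mathcal{C}(A)$, which is its own coarse description, and symmetrically with $\mathcal{C}$ and $\mathcal{F}$ exchanged.

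The uniform construction I need is the following. Suppose $\mathcal{H}$ is a uniform coarsening with associated functional $\Gamma$, and $C \leq\sub{T} A$ via a fixed Turing reduction $\Psi$, so that $C = \Psi^A$. Given any coarse description $D$ of $\mathcal{H}(A)$, set $A_s(n) = \Gamma^D(n,s)$ and let $s_0$ be such that $A_s = A$ for all $s \geq s_0$ (which exists by clause (2) of Definition \ref{coarseningdefn}). Define a functional $\Phi$ so that $\Phi^D(n)$ is the value of $\Psi^{A_m}(n)$ at the least $m \geq n$ for which $\Psi^{A_m}(n)$ halts in at most $m$ steps. This search always terminates: once $m \geq \max(n, s_0, r(n))$, where $r(n)$ is the runtime of $\Psi^A(n)$, we have $A_m = A$ and convergence within $m$ steps, so $\Phi^D$ is total. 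Moreover, for every $n \geq s_0$ the witness $m$ satisfies $m \geq n \geq s_0$, so $A_m = A$ and $\Phi^D(n) = \Psi^A(n) = C(n)$. Thus $\Phi^D$ agrees with $C$ on a cofinite set, and is therefore a coarse description of $C$.

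For clause (2), applying this construction with $\mathcal{H} = \mathcal{C}$ and $C = \mathcal{C}(B)$ (using $\mathcal{C}(B) \leq\sub{T} B \leq\sub{T} A$) yields $\mathcal{C}(B) \leq\sub{uc} \mathcal{C}(A)$; the reverse direction is immediate since $\leq\sub{uc}$ implies $\leq\sub{nc}$ and we can invoke (1). For clause (4), applying the construction once with $\mathcal{H} = \mathcal{C}$ and $C = \mathcal{F}(A)$ and once with $\mathcal{H} = \mathcal{F}$ and $C = \mathcal{C}(A)$ gives both directions of $\mathcal{C}(A) \equiv\sub{uc} \mathcal{F}(A)$. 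The main obstacle, anticipated in the remark preceding Definition \ref{coarseningdefn}, is that $A$ cannot be uniformly computed from any finite portion of a coarse description of $\mathcal{C}(A)$, so we cannot directly simulate $\Psi^A$; the ``search with $m \geq n$'' device is precisely what converts the eventual-stability guarantee $A_s = A$ for cofinitely many $s$ into a single Turing functional that outputs correct values on a cofinite set.
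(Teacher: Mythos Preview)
Your proof is correct and follows essentially the same approach as the paper: parts (1) and (3) are handled by direct applications of the coarsening property, and parts (2) and (4) via the ``search for $s \geq n$ with $\Psi^{A_s}(n)\converges$'' construction that converts the eventual stability of the $A_s$ into a uniform coarse reduction. Your explicit isolation of this construction as a single lemma applied to a generic uniform coarsening $\mathcal{H}$ and target $C \leq\sub{T} A$ is a mild reorganization, but the argument is the same as the paper's.
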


\begin{proof}
1. Suppose that $\mathcal{C}(B) \leq\sub{nc} \mathcal{C}(A)$. Then $\mathcal{C}(A)$ computes a
coarse description $D_1$ of $\mathcal{C}(B)$. Thus $B \leq\sub{T} D_1 \leq\sub{T}
\mathcal{C}(A) \leq\sub{T} A$.

Now suppose that $B \leq\sub{T} A$ and let $D_2$ be a coarse description
of $\mathcal{C}(A)$. Then $\mathcal{C}(B) \leq\sub{T} B \leq\sub{T} A \leq\sub{T} D_2$. Thus
$\mathcal{C}(B) \leq\sub{nc} \mathcal{C}(A)$.

2. Suppose that $\mathcal{C}$ is uniform and that $B \leq\sub{T} A$.  Let
$D_2$ be a coarse description of $\mathcal{C}(A)$.  Let $A_s$ be as in
Definition \ref{coarseningdefn}, with $D=D_2$.  Then $\mathcal{C}(B) \leq\sub{T}
B \leq\sub{T} A$, so let $\Phi$ be such that $\Phi^A=\mathcal{C}(B)$.  Let $X
\leq\sub{T} D_2$ be defined as follows. Given $n$, search for an $s>n$
such that $\Phi^{A_s}(n)\converges$ and let
$X(n)=\Phi^{A_s}(n)$. (Note that such an $s$ must exist.) Then
$X(n)=\Phi^A(n)=\mathcal{C}(B)(n)$ for almost all $n$, so $X$ is a coarse
description of $\mathcal{C}(B)$. Since $X$ is obtained uniformly from $D_2$, we
have $\mathcal{C}(B) \leq\sub{uc} \mathcal{C}(A)$.  The converse follows immediately from
1.

3. Let $D_3$ be a coarse description of $\mathcal{F}(A)$. Then $\mathcal{C}(A) \leq\sub{T}
A \leq\sub{T} D_3$. Thus $\mathcal{C}(A) \leq\sub{nc} \mathcal{F}(A)$. By symmetry, $\mathcal{C}(A)
\equiv\sub{nc} \mathcal{F}(A)$.

4. If $\mathcal{F}$ is uniform then the same argument as in the proof of 2 shows
that we can obtain a coarse description of $\mathcal{C}(A)$ uniformly from $D_3$,
whence $\mathcal{C}(A) \leq\sub{uc} \mathcal{F}(A)$. If $\mathcal{C}$ is also uniform then
 $\mathcal{C}(A) \equiv\sub{uc} \mathcal{F}(A) $  by symmetry.
\end{proof}

Thus uniform coarsenings all induce the same natural embeddings. It
remains to show that uniform coarsenings exist.
One  example  is given by Dzhafarov and Igusa \cite{DI}.
 We give a similar  example.  Let
$I_n=[n!,(n+1)!)$ and let $\mathcal{I}(A)=\bigcup_{n \in A} I_n$; this map first appeared in Jockusch and Schupp \cite{JS}. Clearly $\mathcal{I}(A)
\leq\sub{T} A$, and it is easy to check that if $D$ is a coarse
description of $\mathcal{I}(A)$ then $D$ computes $A$. Thus $\mathcal{I}$ is a
coarsening.

To construct a uniform coarsening, let
$\mathcal{H}(A)=\{\langle n,i \rangle : n \in A\; \wedge\; i \in \omega\}$ and
define $\mathcal{E}(A)=\mathcal{I}(\mathcal{H}(A))$.  The notation $\mathcal{E}$  denotes 
this particular coarsening throughout the paper.

\begin{prop}\label{Euniform}
The map $\mathcal{E}$ is a uniform coarsening.
\end{prop}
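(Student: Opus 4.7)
The plan is to observe first that $\mathcal{E}(A) \leq\sub{T} A$ is immediate, since $\mathcal{H}$ and $\mathcal{I}$ are both computable operations on sets. The bulk of the work is to exhibit a single binary functional $\Gamma$ witnessing uniformity; the remaining coarsening condition (that every coarse description $D$ of $\mathcal{E}(A)$ computes $A$) then falls out, because once $A_s = A$ for cofinitely many $s$ we may hard-code any sufficiently large $s_0$ into a Turing reduction and compute $A(n) = \Gamma^D(n,s_0)$.

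The key tool is majority voting on the intervals $I_m$. On each $I_m$, the set $\mathcal{E}(A)$ is constantly equal to $\mathcal{H}(A)(m)$, and $|I_m|/(m+1)! = m/(m+1) \to 1$, so $I_m$ eventually occupies almost all of the initial segment $[0,(m+1)!)$. Consequently, if $D$ is a coarse description of $\mathcal{E}(A)$ and $K$ is chosen so that $|(D \triangle \mathcal{E}(A)) \uhr k| < k/4$ for all $k \geq K$, then a short calculation shows that for every $m$ with $m \geq \max(K,2)$ the discrepancy between $D$ and $\mathcal{E}(A)$ inside $I_m$ is strictly less than $|I_m|/2$. Thus the majority bit of $D$ on $I_m$ correctly recovers $\mathcal{H}(A)(m)$.

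With this in hand, define $\Gamma^D(n,s)$ to be the majority bit of $D$ on $I_{\langle n, s \rangle}$, which is clearly total and uniformly computable from $D$. By construction of $\mathcal{H}$ we have $\mathcal{H}(A)(\langle n, s\rangle) = A(n)$ for every $n$ and $s$, and since the standard pairing satisfies $\langle n, s\rangle \geq s$, as soon as $s \geq \max(K,2)$ the index $\langle n, s\rangle$ lies in the good range for every $n$ simultaneously. Hence $A_s(n) = \Gamma^D(n,s) = A(n)$ for all $n$ once $s$ is large enough, so $A_s = A$ for cofinitely many $s$.

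The main obstacle is precisely the requirement that $A_s$ \emph{equal} $A$ rather than merely coarsely approximate it for cofinitely many $s$. If one tried to use $\mathcal{I}$ in place of $\mathcal{E}$ there would be only one interval carrying each bit $A(n)$, and a malicious coarse description could flip the majority on a small but unpredictable finite set of initial intervals with no way for a uniform procedure to detect and correct them. Composing $\mathcal{I}$ with $\mathcal{H}$ supplies infinitely many intervals encoding each bit $A(n)$, and indexing these by $\langle n, s\rangle$ turns the phrase ``large enough $s$'' into ``large enough interval for every bit at once'', which is exactly what the uniformity clause demands.
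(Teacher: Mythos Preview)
Your proof is correct and follows essentially the same approach as the paper: define $\Gamma^D(n,s)$ to be the majority bit of $D$ on the interval $I_{\langle n,s\rangle}$, observe that majority voting recovers $\mathcal{H}(A)(m)$ for all but finitely many $m$, and conclude that $A_s=A$ for cofinitely many $s$. The paper's version is more terse (it simply asserts that the set $G$ of majority bits satisfies $G =^* \mathcal{H}(A)$ and then reads off columns), whereas you supply the explicit density estimate and invoke $\langle n,s\rangle \geq s$ rather than just the bijectivity of the pairing; these are cosmetic differences only.
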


\begin{proof}
Clearly $\mathcal{E}(A) \leq\sub{T} A$. Now let $D$ be a coarse description of
$\mathcal{E}(A)$. Let $G =\{m : |D \cap I_m|>\frac{|I_m|}{2}\}$ and let $A_s=\{n
: \langle n,s \rangle \in G\}$. Then $G =^*\mathcal{H}(A)$, so $A_s=A$ for all
but finitely many $s$, and the $A_s$ are obtained uniformly from $D$.
\end{proof}

A first natural question is whether uniform coarse reducibility and non\-uniform
coarse reducibility are indeed different. We give a positive answer by
showing that, unlike in the nonuniform case, the mappings $\mathcal{E}$ and $\mathcal{I}$ are not
equivalent up to uniform coarse reducibility. Recall that a set $X$ is
\emph{autoreducible} if there exists a Turing functional $\Phi$ such
that for every $n \in \omega$ we have $\Phi^{X \setminus \{n\}}(n) =
X(n)$. Equivalently, we could require that $\Phi$ not ask whether its
input belongs to its oracle.  We now introduce a $\Delta^0_2$-version
of this notion.

\begin{defn}
A set $X$ is \emph{jump-autoreducible} if there exists a Turing
functional $\Phi$ such that for every $n \in \omega$ we have $\Phi^{(X
\setminus \{n\})'}(n) = X(n)$.
\end{defn}

\begin{prop}
\label{cjumpauto}
Let $X$ be such that $\mathcal{E}(X) \leq\sub{uc} \mathcal{I}(X)$. Then $X$ is
jump-autoreducible.
\end{prop}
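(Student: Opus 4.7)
The plan is to exploit the key observation that $\mathcal{I}(X\setminus\{n\})$ is always a coarse description of $\mathcal{I}(X)$: the two sets differ on at most the finite interval $I_n$, which has density $0$. Hence, fixing a Turing functional $\Phi$ witnessing $\mathcal{E}(X)\leq\sub{uc}\mathcal{I}(X)$, the set $C_n:=\Phi^{\mathcal{I}(X\setminus\{n\})}$ is total and is a coarse description of $\mathcal{E}(X)$. Crucially, $C_n$ is computed uniformly from $X\setminus\{n\}$, with no knowledge of $X(n)$ required.

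To recover $X(n)$ from $C_n$, I would observe that $\mathcal{E}(X)$ is constantly equal to $X(n)$ on each interval $I_{\langle n,i\rangle}$, because $\mathcal{H}$ encodes the bit $X(n)$ at every position of the form $\langle n,i\rangle$. The key density lemma is: for cofinitely many $i$, strictly more than half of the elements of $I_{\langle n,i\rangle}$ match $X(n)$ in $C_n$. If this failed for infinitely many $i$, then setting $m=\langle n,i\rangle$ we would have $|(C_n\triangle\mathcal{E}(X))\cap I_m|\geq|I_m|/2$, and since $|I_m|=m\cdot m!$, this gives $\rho_{(m+1)!}(C_n\triangle\mathcal{E}(X))\geq\tfrac{m}{2(m+1)}$. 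Letting $i\to\infty$ forces $\limsup_k\rho_k(C_n\triangle\mathcal{E}(X))\geq 1/2$, contradicting $\rho(C_n\triangle\mathcal{E}(X))=0$.

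Thus if $v_i(n)$ denotes the majority value of $C_n$ on $I_{\langle n,i\rangle}$, then $X(n)=\lim_i v_i(n)$, and $v_i$ is computable uniformly in $n,i$ from $X\setminus\{n\}$ (since $\Phi^{\mathcal{I}(X\setminus\{n\})}$ is total, the finitely many oracle computations in $I_{\langle n,i\rangle}$ all converge). By the uniform form of the Shoenfield limit lemma, there is a single functional $\Psi$ with $\Psi^{(X\setminus\{n\})'}(n)=X(n)$ for every $n$, which is precisely jump-autoreducibility.

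The main obstacle is the density lemma; once one is led to compare $|I_m|$ against $(m+1)!$, the averaging argument pins down the majority value correctly for all but finitely many $i$, and everything else is routine bookkeeping about uniformity of the reduction and the limit lemma.
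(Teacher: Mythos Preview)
Your proof is correct and follows essentially the same approach as the paper's. The only difference is that where the paper invokes Proposition~\ref{Euniform} (that $\mathcal{E}$ is a uniform coarsening) as a black box to pass from a coarse description of $\mathcal{E}(X)$ to a sequence $A_s$ eventually equal to $X$, you inline that proposition's proof via your majority-vote density lemma; the resulting argument is otherwise identical.
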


\begin{proof}
We must give a procedure for computing $X(n)$ from $(X \setminus
\{n\})'$ that is uniform in $X$.  Given an oracle for $X \setminus
\{n\}$, we can uniformly compute $\mathcal{I}(X \setminus \{n\})$.  Now $\mathcal{I}(X
\setminus \{n\}) =^* \mathcal{I}(X)$, so $\mathcal{I}(X \setminus \{n\})$ is a coarse
description of $\mathcal{I}(X)$.  Since $\mathcal{E}(X) \leq\sub{uc} \mathcal{I}(X)$ by assumption, 
from $\mathcal{I}(X \setminus \{n\})$ we can uniformly compute a coarse description $D$ of
$\mathcal{E}(X)$. Since $\mathcal{E}$ is a uniform coarsening by Proposition
\ref{Euniform}, from $D$ we can uniformly obtain sets $A_0, A_1,
\dots$ with $A_s = X$ for all sufficiently large $s$.  Composing these
various reductions, from $X \setminus \{n\}$ we can uniformly compute
sets $A_0, A_1, \dots$ with $A_s = X$ for all sufficiently large $s$.
Then from $(X \setminus \{n\})'$ we can uniformly compute $\lim_s
A_s(n) = X(n)$, as needed.
\end{proof}

We will now show that $2$-generic sets are not jump-autoreducible, which
will give us a first example separating uniform coarse reducibility and
nonuniform coarse reducibility.  For this we first show that no
$1$-generic set is autoreducible, which is an easy exercise.

\begin{prop}
If $X$ is $1$-generic, then $X$ is not autoreducible.
\end{prop}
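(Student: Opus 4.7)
The plan is to argue by contradiction, exploiting the density/forcing characterization of $1$-genericity against an appropriate c.e.\ set of witness strings. I would use the equivalent formulation of autoreducibility noted in the excerpt: $X$ is autoreducible iff there is a Turing functional $\Phi$ such that $\Phi^X(n) = X(n)$ for every $n$ and such that the computation of $\Phi^X(n)$ never queries its oracle at position $n$. Fix such a $\Phi$ and suppose for contradiction that $X$ is $1$-generic.

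Next, I would define the c.e.\ set
\[
W = \bigl\{ \sigma \in 2^{<\omega} : \exists\, n < |\sigma|\ \text{with}\ \Phi^\sigma(n) \converges,\ \text{the computation not querying position}\ n,\ \text{and}\ \Phi^\sigma(n) \neq \sigma(n) \bigr\}.
\]
By $1$-genericity, either $X \uhr k \in W$ for some $k$, or some $X \uhr m$ has no extension in $W$. The first alternative can be ruled out immediately: it would produce an $n < k$ with $\Phi^{X \uhr k}(n) \converges \neq X(n)$ via a computation with no query at position $n$, whence by use $\Phi^X(n) = \Phi^{X \uhr k}(n) \neq X(n)$, contradicting the autoreducibility equation.

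So I would focus on the second alternative. Pick any $n \geq m$; since $\Phi^X(n) \converges$ without querying position $n$, there is an initial segment $\tau \prec X$ with $|\tau| > n$ computing $\Phi^\tau(n)$ via the same computation. As $\tau$ extends $X \uhr m$ and hence $\tau \notin W$, we must have $\Phi^\tau(n) = \tau(n)$. Now let $\tau'$ be obtained from $\tau$ by flipping its $n$th bit. Since $n \geq m$, the string $\tau'$ still extends $X \uhr m$, and because position $n$ is not queried in the computation, $\Phi^{\tau'}(n) = \Phi^\tau(n) = \tau(n) \neq \tau'(n)$, putting $\tau'$ in $W$ and contradicting the assumption that no extension of $X \uhr m$ lies in $W$.

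The only subtlety, and the reason the paper emphasizes the equivalent formulation, is arranging that position $n$ is not queried, so that flipping the $n$th bit of a witness string preserves the computation. With that observation in hand, the rest is a routine forcing-style diagonalization against a c.e.\ dense set, so I expect no real obstacle beyond the careful statement of $W$.
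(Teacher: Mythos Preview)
Your proof is correct and follows essentially the same approach as the paper's: define a c.e.\ set of strings witnessing a disagreement, use $1$-genericity to pass to an initial segment with no extension in the set, and then flip the bit at an unqueried position to manufacture such an extension. The only cosmetic difference is that the paper works with the formulation $\Phi^{X\setminus\{n\}}(n)=X(n)$ (so its witness set $S$ consists of $\tau$ with $\Phi^{\tau\setminus\{n\}}(n)\converges\neq\tau(n)$), while you use the equivalent ``no query at $n$'' formulation; the bit-flip step is the same key idea in both.
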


\begin{proof}
Suppose for the sake of a contradiction that $X$ is $1$-generic and is
autoreducible via $\Phi$. For a string $\sigma$, let $\sigma^{-1}(i)$
be the set of $n$ such that $\sigma(n)=i$.  If $\tau$ is a binary
string, let $\tau \setminus \{n\}$ be the unique binary string $\mu$
of the same length such that $\mu^{-1}(1) = \tau^{-1}(1) \setminus
\{n\}$. Let $S$ be the set of strings $\tau$ such that $\Phi^{\tau
\setminus \{n\}}(n) \converges \neq \tau(n) \converges$ for some $n$.
Then $S$ is a c.e.\ set of strings and $X$ does not meet $S$. Since
$X$ is $1$-generic, there is a string $\sigma \prec X$ that has no
extension in $S$. Let $n = |\sigma|$, and let $\tau \succ \sigma$ be a
string such that $\Phi^{\tau \setminus \{n\}}(n) \converges$.  Such a
string $\tau$ exists because $\sigma \prec X$ and $\Phi$ witnesses
that $X$ is autoreducible. Furthermore, we may assume that $\tau(n)
\neq \Phi^{\tau \setminus \{n\}}$, since changing the value of
$\tau(n)$ does not affect any of the conditions in the choice of
$\tau$.  Hence $\tau$ is an extension of $\sigma$ and $\tau \in S$,
which is the desired contradiction.
\end{proof}

\begin{prop}
\label{genjumpauto}
If $X$ is $2$-generic, then $X$ is not jump-autoreducible.
\end{prop}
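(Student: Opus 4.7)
The plan is to adapt the preceding autoreducibility proof by replacing its c.e.\ set of ``bad'' strings with a $\Sigma^0_2$ set $V$ and invoking $2$-genericity in place of $1$-genericity. Assume for contradiction that $X$ is $2$-generic and $\Phi^{(X \setminus \{n\})'}(n) = X(n)$ for all $n$. The set $V$ will be built so that (i) any $\tau \prec X$ lying in $V$ directly refutes jump-autoreducibility, while (ii) if $X$ avoids $V$, then $1$-genericity lets us extend any $\sigma \prec X$ witnessing the avoidance to a string in $V$, contradicting the $2$-generic dichotomy.

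Let $V$ be the set of $\tau$ for which there exist $n < |\tau|$, a bit $v \neq \tau(n)$, and a finite string $\alpha$ with $\Phi^\alpha(n) \converges = v$, such that $\tau$ forces $\alpha$ to be an initial segment of $(Y \setminus \{n\})'$ for every $Y \succ \tau$: for each $i < |\alpha|$ with $\alpha(i) = 1$, $\Phi_i^{\tau \setminus \{n\}}(i) \converges$ (a $\Sigma^0_1$ condition on $\tau$, preserved under oracle extension); and for each $i < |\alpha|$ with $\alpha(i) = 0$, no $\rho \succeq \tau$ yields $\Phi_i^{\rho \setminus \{n\}}(i) \converges$ (a $\Pi^0_1$ condition). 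Existentially quantifying over the finite data $(n, v, \alpha)$ places $V$ in $\Sigma^0_2$. If some $\tau \prec X$ lies in $V$ with such a witness, then $\alpha \prec (X \setminus \{n\})'$ and hence $\Phi^{(X \setminus \{n\})'}(n) = v \neq \tau(n) = X(n)$, the desired contradiction.

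Otherwise, by $2$-genericity some $\sigma \prec X$ has no extension in $V$. Set $n = |\sigma|$ and let $\alpha$ be the actual initial segment of $(X \setminus \{n\})'$ used in the computation $\Phi^{(X \setminus \{n\})'}(n) = X(n)$. For each $i < |\alpha|$ with $\alpha(i) = 0$, the c.e.\ set $S_i = \{\rho : \Phi_i^{\rho \setminus \{n\}}(i) \converges\}$ is not met by $X$, so by $1$-genericity some $\rho_i \prec X$ has no extension in $S_i$. Choose $\sigma' \prec X$ extending $\sigma$ and all the $\rho_i$ and long enough to witness $\Phi_i^{\sigma' \setminus \{n\}}(i) \converges$ for every $i < |\alpha|$ with $\alpha(i) = 1$, and let $\tau$ be $\sigma'$ with bit $n$ flipped. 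Since $\tau \setminus \{n\} = \sigma' \setminus \{n\}$, the positive convergences transfer to $\tau$; and for any $\rho \succeq \tau$, re-flipping bit $n$ of $\rho$ yields $\hat\rho \succeq \rho_i$ with $\hat\rho \setminus \{n\} = \rho \setminus \{n\}$, forcing $\Phi_i^{\rho \setminus \{n\}}(i) \diverges$. Thus $\tau \in V$ and $\tau \succeq \sigma$, contradicting the choice of $\sigma$.

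The main obstacle is the $\Pi^0_1$ ``negative information'' clause needed to force $\alpha(i) = 0$ in finite form: this is what pushes $V$ from $\Sigma^0_1$ up to $\Sigma^0_2$ and makes full $2$-genericity (rather than just $1$-genericity) essential for the dichotomy. The bit-flipping trick at position $n$---exploiting that $\tau \setminus \{n\}$ is insensitive to $\tau(n)$---is the technical device that reconciles the requirement $\tau(n) \neq X(n)$ with the use of initial segments of $X$ to witness divergence.
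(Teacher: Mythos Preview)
Your proof is correct and takes a genuinely different route from the paper's. The paper argues indirectly: since $X$ is $2$-generic it is $1$-generic relative to $\emptyset'$, so by relativizing the preceding proposition $X$ is not autoreducible relative to $\emptyset'$; it then uses that $1$-generic sets are uniformly $\mathrm{GL}_1$ (so $(X\setminus\{n\})'$ is uniformly computable from $(X\setminus\{n\})\oplus\emptyset'$, noting that $X\setminus\{n\}$ is also $1$-generic) to convert a putative jump-autoreduction into an autoreduction relative to $\emptyset'$, contradicting the previous sentence. Your argument instead builds the $\Sigma^0_2$ set of ``bad'' strings by hand and invokes the $2$-generic dichotomy directly, essentially unwinding both the relativization and the $\mathrm{GL}_1$ translation in one explicit forcing move. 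The paper's proof is shorter and reuses off-the-shelf facts about genericity; yours is self-contained and makes transparent exactly why $\Sigma^0_2$ (and hence $2$-genericity rather than $1$-genericity) is required---the $\Pi^0_1$ ``no extension makes $\Phi_i$ converge'' clauses that force $\alpha(i)=0$ are precisely what push $V$ above $\Sigma^0_1$.
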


\begin{proof}
Since $X$ is $2$-generic, $X$ is $1$-generic relative to $\emptyset'$.
Hence, by relativizing the proof of the previous proposition to
$\emptyset'$, we see that $X$ is not autoreducible relative to
$\emptyset'$.  However, the class of $1$-generic sets is uniformly
$\mathrm{GL}_1$, i.e., there exists a single Turing functional $\Psi$
such that for every $1$-generic $X$ we have $\Psi^{X \oplus
\emptyset'} = X'$, as can be verified by looking at the usual proof
that every $1$-generic is $\mathrm{GL}_1$ (see \cite[Lemma
2.6]{J}). Of course, if $X$ is $1$-generic, then $X \setminus \{n\}$
is also $1$-generic for every $n$.  Thus from an oracle for $(X
\setminus \{n\}) \oplus \emptyset'$ we can uniformly compute $(X
\setminus \{n\})'$. Now, if $X$ is jump-autoreducible, from $(X
\setminus \{n\})'$ we can uniformly compute $X(n)$.  Composing these
reductions shows that $X(n)$ is uniformly computable from $(X
\setminus \{n\}) \oplus \emptyset'$, which contradicts our previous
remark that $X$ is not autoreducible relative to $\emptyset'$.
\end{proof}

\begin{cor}
If $X$ is $2$-generic, then $\mathcal{E}(X) \leq\sub{nc} \mathcal{I}(X)$ but $\mathcal{E}(X)
\nleq\sub{uc} \mathcal{I}(X)$.
\end{cor}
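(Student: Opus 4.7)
The plan is to derive the corollary as a routine consequence of three earlier results: Proposition \ref{coarseningsprop}(3), Proposition \ref{cjumpauto}, and Proposition \ref{genjumpauto}. No new technical ideas are needed; the point is just to package these together.

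For the first assertion, $\mathcal{E}(X) \leq\sub{nc} \mathcal{I}(X)$, I would simply observe that both $\mathcal{E}$ and $\mathcal{I}$ are coarsenings (the former by Proposition \ref{Euniform}, the latter by the paragraph preceding it). Part 3 of Proposition \ref{coarseningsprop} then yields $\mathcal{E}(X) \equiv\sub{nc} \mathcal{I}(X)$ for every set $X$ whatsoever, and in particular for every $2$-generic $X$. Note that this direction uses nothing about $X$.

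For the second assertion, $\mathcal{E}(X) \nleq\sub{uc} \mathcal{I}(X)$, I would argue by contradiction: assume $\mathcal{E}(X) \leq\sub{uc} \mathcal{I}(X)$. Proposition \ref{cjumpauto} applies verbatim to conclude that $X$ is jump-autoreducible. But Proposition \ref{genjumpauto} asserts that no $2$-generic set is jump-autoreducible, which gives the required contradiction.

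Since both ingredients have already been established, the main obstacle is essentially nothing — really just checking that the hypotheses of Propositions \ref{cjumpauto} and \ref{genjumpauto} match. The only remark worth making in the write-up is the observation that this corollary is what separates $\leq\sub{uc}$ from $\leq\sub{nc}$, since it exhibits a concrete pair of sets (namely $\mathcal{E}(X)$ and $\mathcal{I}(X)$ for any $2$-generic $X$) witnessing the strict inclusion of the uniform in the nonuniform coarse reducibility, as promised in the discussion preceding Proposition \ref{cjumpauto}.
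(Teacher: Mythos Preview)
Your proposal is correct and matches the paper's own proof essentially line for line: the paper invokes Proposition \ref{coarseningsprop} for the nonuniform direction and Propositions \ref{cjumpauto} and \ref{genjumpauto} for the uniform direction, exactly as you do. The only cosmetic difference is that you frame the second part as a contradiction, whereas the paper states it as a direct consequence.
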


\begin{proof}
We know that $\mathcal{E}(X) \leq\sub{nc} \mathcal{I}(X)$ from
Proposition \ref{coarseningsprop}. The fact that $\mathcal{E}(X) \nleq\sub{uc}
\mathcal{I}(X)$ follows from Propositions \ref{cjumpauto} and
\ref{genjumpauto}.
\end{proof}

It is natural to ask whether the same result holds for $2$-random
sets. In the proof above we used the fact that the $2$-generic sets
are uniformly $\mathrm{GL}_1$. For $2$-random sets this fact is almost
true, as expressed by the following lemma. The proof is adapted from
Monin \cite{M}, where a generalization for higher levels of randomness
is proved. Let $U_0,U_1,\ldots$ be a fixed universal Martin-L\"of test
relative to $\emptyset'$. The \emph{$2$-randomness deficiency} of a
$2$-random $X$ is the least $c$ such that $X \notin U_c$.

\begin{lem}
There is a Turing functional $\Theta$ such that, for a $2$-random $X$
and an upper bound $b$ on the $2$-randomness deficiency of $X$, we
have $\Theta^{X \oplus \emptyset',b} = X'$.
\end{lem}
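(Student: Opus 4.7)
The plan is to build, using $\emptyset'$, a Martin-L\"of test $(V_c)_c$ relative to $\emptyset'$ that captures, for each $e$, the ``slow-to-converge'' oracles of $\Phi_e(e)$. Once we know $X$ avoids $U_b$, it will also avoid an appropriate $V_{b+k}$, and this will turn the $\Sigma^0_1$ question ``does $\Phi_e^X(e)\converges$?'' into a finite clopen question decided by $X \oplus \emptyset'$, with $b$ serving as the advice needed to pick the stage at which to stop looking.

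Concretely, for each $e$ I would let $W_e = \{Y : \Phi_e^Y(e)\converges\}$ with its standard clopen approximations $W_e[s]$, and set $p_e = \mu(W_e)$, a uniformly left-c.e.\ real and hence $\emptyset'$-computable. Using $\emptyset'$, compute for every pair $c,e$ a stage $m_{c,e}$ with $p_e - \mu(W_e[m_{c,e}]) < 2^{-c-e-2}$, and define
\[V_c = \bigcup_e \bigl( W_e \setminus W_e[m_{c,e}] \bigr).\]
Then $V_c$ is uniformly $\Sigma^0_1(\emptyset')$ with $\mu(V_c) \le \sum_e 2^{-c-e-2} \le 2^{-c-1}$, so $(V_c)_c$ is a Martin-L\"of test relative to $\emptyset'$. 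By the universality of $(U_c)_c$, fix once and for all an absolute constant $k$ such that $V_{c+k} \subseteq U_c$ for all $c$, and hard-code $k$ into $\Theta$. Define $\Theta^{X \oplus \emptyset', b}(e)$ to equal $1$ if $X \in W_e[m_{b+k,e}]$ and $0$ otherwise.

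The verification is then immediate: if $X$ is $2$-random with $2$-randomness deficiency at most $b$, then $X \notin U_b$, hence $X \notin V_{b+k}$. In the ``$1$'' case correctness holds by definition of $W_e$; in the ``$0$'' case, if nonetheless $\Phi_e^X(e)\converges$, then $X \in W_e \setminus W_e[m_{b+k,e}] \subseteq V_{b+k}$, a contradiction. I expect the only mild point of care to be the clause ``compute $m_{c,e}$ from $\emptyset'$'': since $p_e$ is only left-c.e., $\emptyset'$ is needed to recognize when the clopen approximation $W_e[s]$ has come within $2^{-c-e-2}$ of its limit, which is routine. The rest is just the universality bookkeeping that contributes the additive constant $k$, which can be absorbed into the advice $b$.
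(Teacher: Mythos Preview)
Your proof is correct and follows essentially the same approach as the paper: both use $\emptyset'$ to find, for each $e$, a stage $s$ at which $\mu(\{Y : e \in Y'\} \setminus \{Y : e \in Y'\}[s])$ is small, and then invoke universality of $(U_c)_c$ together with the bound $b$ to conclude that $X$ lies outside the residual set. The only cosmetic difference is that you bundle all $e$ into a single $\emptyset'$-test $(V_c)_c$ and apply universality once, whereas the paper keeps a separate $\emptyset'$-test for each $e$ and applies universality uniformly in $e$.
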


\begin{proof}
Let $\mathscr{V}_e=\{Z : e \in Z'\}$. The $\mathscr{V}_e$ are uniformly $\Sigma^0_1$
classes, so we can define a function $f \leq\sub{T} \emptyset'$ such
that $\mu(\mathscr{V}_e \setminus \mathscr{V}_e[f(e,i)])<2^{-i}$ for all $e$ and $i$. Then
each sequence $\mathscr{V}_e \setminus \mathscr{V}_e[f(e,0)],\mathscr{V}_e \setminus
\mathscr{V}_e[f(e,1)],\ldots$ is an $\emptyset'$-Martin L\"of test, and from $b$
we can compute a number $m$ such that if $X$ is $2$-random and $b$
bounds the $2$-randomness deficiency of $X$, then $X \notin \mathscr{V}_e
\setminus \mathscr{V}_e[f(e,m)]$. Then $X \in \mathscr{V}_e$ if and only if $X \in \mathscr{V}_e[f(e,m)]$,
which we can verify $(X\oplus\emptyset')$-computably.
\end{proof}

\begin{prop}
If $X$ is $2$-random, then $X$ is not jump-autoreducible.
\end{prop}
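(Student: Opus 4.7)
The plan is to adapt the argument of Proposition~\ref{genjumpauto} to the randomness setting, with the preceding lemma playing the role of uniform $\mathrm{GL}_1$. The analog of ``no $1$-generic is autoreducible'' is the known fact that no $1$-random set is autoreducible. Relativizing, no set that is $1$-random relative to $\emptyset'$ is autoreducible relative to $\emptyset'$; since a $2$-random $X$ is $1$-random relative to $\emptyset'$, it suffices to show that if $X$ is $2$-random and jump-autoreducible, then $X$ is autoreducible relative to $\emptyset'$.

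Suppose $X$ is $2$-random and jump-autoreducible via $\Phi$. For each $n$, the set $X\setminus\{n\}$ is a finite modification of $X$, hence $2$-random, so by the lemma $\Theta^{(X\setminus\{n\})\oplus\emptyset',\,b_n}=(X\setminus\{n\})'$ whenever $b_n$ upper-bounds the $2$-randomness deficiency of $X\setminus\{n\}$. The main technical step is a deficiency inequality of the form $\mathrm{def}(X\setminus\{n\})\leq \mathrm{def}(X)+2\log(n+2)+O(1)$. I would prove this by introducing the $\emptyset'$-Martin-L\"of test $V_k=\bigcup_n\{Z:Z\setminus\{n\}\in U_{k+2\log(n+2)+C}\}$, where $(U_k)$ is the universal $\emptyset'$-ML test. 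The map $Z\mapsto Z\setminus\{n\}$ is at most $2$-to-$1$, giving the estimate $\mu(V_k)\leq 2^{1-k-C}\sum_n(n+2)^{-2}$, which is less than $2^{-k}$ for $C$ sufficiently large; universality of $(U_k)$ then yields a constant $C'$ with $V_k\subseteq U_{k-C'}$ for all $k$, and contraposition gives the desired inequality.

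With the deficiency inequality in hand, choose a constant $c$ that upper-bounds $\mathrm{def}(X)+C+C'$, and define a Turing functional $\Psi$ by $\Psi^{Z\oplus\emptyset'}(n)=\Phi^{\Theta^{Z\oplus\emptyset',\,c+2\log(n+2)}}(n)$. For every $n$, taking $Z=X\setminus\{n\}$, the choice of $c$ ensures that $c+2\log(n+2)$ bounds the $2$-randomness deficiency of $X\setminus\{n\}$, so the inner $\Theta$-computation returns $(X\setminus\{n\})'$, and the outer $\Phi$-computation then returns $X(n)$ by jump-autoreducibility. Hence $\Psi$ witnesses that $X$ is autoreducible relative to $\emptyset'$, contradicting the relativized form of ``no $1$-random is autoreducible.'' The principal obstacle is establishing the deficiency inequality; the remainder is a routine composition of reductions, closely parallel to the proof of Proposition~\ref{genjumpauto}.
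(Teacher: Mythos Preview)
Your proof is correct and follows essentially the same approach as the paper's: both derive a contradiction by showing that a jump-autoreducible $2$-random $X$ would be autoreducible relative to $\emptyset'$, against the relativized Figueira--Miller--Nies result, using the lemma's $\Theta$ to pass from $(X\setminus\{n\})\oplus\emptyset'$ to $(X\setminus\{n\})'$. The paper simply asserts that ``it can be directly verified that there is a computable function $f$ such that $f(n)$ bounds the randomness deficiency of $X\setminus\{n\}$,'' whereas you supply an explicit such bound $c+2\log(n+2)$ via the auxiliary test $(V_k)$ and level-by-level universality; apart from this added detail (and the trivial matter of rounding $2\log(n+2)$ to an integer index), the arguments coincide.
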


\begin{proof}
Because $X$ is $2$-random, it is not autoreducible relative to
$\emptyset'$, as can be seen by relativizing the proof of Figueira,
Miller, and Nies \cite{FMN} that no $1$-random set is autoreducible.
To obtain a contradiction,  assume that $X$ is jump-autoreducible
through some functional $\Phi$. It can be directly verified that there
is a computable function $f$ such that $f(n)$ bounds the randomness
deficiency of $X \setminus \{n\}$. Now let $\Psi^{Y \oplus
\emptyset'}(n) = \Phi^{\Theta^{Y \oplus \emptyset',f(n)}}(n)$. Then
$X$ is autoreducible relative to $\emptyset'$ through $\Psi$, a
contradiction.
\end{proof}

\begin{cor}
If $X$ is $2$-random, then $\mathcal{E}(X) \leq\sub{nc} \mathcal{I}(X)$ but $\mathcal{E}(X)
\nleq\sub{uc} \mathcal{I}(X)$.
\end{cor}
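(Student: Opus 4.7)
The plan is to mirror the proof of the earlier corollary for $2$-generic sets, substituting the newly established fact about $2$-random sets for the corresponding fact about $2$-generics. The corollary decomposes into two independent claims, and each one has already been reduced to something we have in hand.

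For the positive direction, I would simply invoke Proposition \ref{coarseningsprop}(3), which tells us that any two coarsenings agree up to $\equiv\sub{nc}$. Since both $\mathcal{E}$ and $\mathcal{I}$ are coarsenings (the former shown uniform in Proposition \ref{Euniform}, the latter verified in the paragraph following Proposition \ref{coarseningsprop}), we get $\mathcal{E}(X) \equiv\sub{nc} \mathcal{I}(X)$ unconditionally, and in particular $\mathcal{E}(X) \leq\sub{nc} \mathcal{I}(X)$ with no use of the randomness hypothesis.

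For the negative direction, I would argue by contraposition. Proposition \ref{cjumpauto} asserts that if $\mathcal{E}(X) \leq\sub{uc} \mathcal{I}(X)$ then $X$ is jump-autoreducible. But by the preceding proposition (the one immediately above the corollary), no $2$-random set is jump-autoreducible. Hence if $X$ is $2$-random, then $\mathcal{E}(X) \nleq\sub{uc} \mathcal{I}(X)$.

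There is really no obstacle here: both the compactness-style content and the technical work have already been done, namely (i) relativizing Figueira--Miller--Nies to get non-autoreducibility relative to $\emptyset'$, (ii) the $\Theta$-lemma that supplies $X'$ from $X \oplus \emptyset'$ together with a deficiency bound, and (iii) the general reduction of uniform coarse equivalence of $\mathcal{E}(X)$ and $\mathcal{I}(X)$ to jump-autoreducibility. The corollary is just the bookkeeping that combines them, exactly as in the $2$-generic case.
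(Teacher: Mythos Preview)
Your proposal is correct and is exactly the intended argument: the paper leaves this corollary without proof because it is word-for-word parallel to the $2$-generic case, invoking Proposition~\ref{coarseningsprop}(3) for $\leq\sub{nc}$ and combining Proposition~\ref{cjumpauto} with the immediately preceding proposition for $\nleq\sub{uc}$.
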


Although we will not discuss generic reducibility after this section,
it is worth noting that our maps $\mathcal{E}$ and $\mathcal{I}$ also allow us to
distinguish generic reducibility from its nonuniform analog. Let us
briefly review the relevant definitions from \cite{JS}. A
\emph{generic description} of a set $A$ is a partial function that
agrees with $A$ where defined, and whose domain has density $1$. A set
$A$ is \emph{generically reducible} to a set $B$, written $A
\leq\sub{g} B$, if there is an enumeration operator $W$ such that if
$\Phi$ is a generic description of $B$, then $W^{\gr(\Phi)}$ is the
graph of a generic description of $A$. We can define the notion of
\emph{nonuniform generic reducibility} in a similar way: $A
\leq\sub{ng} B$ if for every generic description $\Phi$ of $B$, there
is a generic description $\Psi$ of $A$ such that $\gr(\Psi)$ is
enumeration reducible to $\gr(\Phi)$.

It is easy to see that $\mathcal{E}(X) \leq\sub{ng} \mathcal{I}(X)$ for all $X$. On the
other hand, we have the following fact.

\begin{prop}
If $\mathcal{E}(X) \leq\sub{g} \mathcal{I}(X)$ then $X$ is autoreducible.
\end{prop}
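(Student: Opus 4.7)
The plan is to mimic the proof of Proposition~\ref{cjumpauto}, substituting generic descriptions for coarse descriptions. Given an oracle for $X \setminus \{n\}$, I would first uniformly compute $\mathcal{I}(X \setminus \{n\})$. Because the intervals $I_k$ are pairwise disjoint, $\mathcal{I}(X)$ and $\mathcal{I}(X \setminus \{n\})$ agree on $\omega \setminus I_n$. Hence restricting the characteristic function of $\mathcal{I}(X \setminus \{n\})$ to $\omega \setminus I_n$ produces a partial function $\Phi_n$ with cofinite (so density-$1$) domain that agrees with $\mathcal{I}(X)$ wherever it is defined; so $\Phi_n$ is a generic description of $\mathcal{I}(X)$, and $\gr(\Phi_n)$ is computable from $X \setminus \{n\}$, uniformly in $n$.

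Let $W$ be an enumeration operator witnessing $\mathcal{E}(X) \leq\sub{g} \mathcal{I}(X)$. Then $W^{\gr(\Phi_n)}$ is the graph of a generic description $\Psi$ of $\mathcal{E}(X)$, and is c.e.\ in $X \setminus \{n\}$. I would then observe that $\dom(\Psi)$ meets $I_{\langle n, i\rangle}$ for some $i$: otherwise, since $|I_{\langle n, i\rangle}|/(\langle n, i\rangle + 1)! \to 1$ as $i \to \infty$, the complement of $\dom(\Psi)$ would have upper density $1$, contradicting $\rho(\dom(\Psi)) = 1$.

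The autoreduction then enumerates $W^{\gr(\Phi_n)}$ from $X \setminus \{n\}$ until it finds a pair $(m, v)$ with $m \in I_{\langle n, i\rangle}$ for some $i$, and outputs $v$. Since $I_{\langle n, i\rangle} \subseteq \mathcal{E}(X)$ when $n \in X$ and $I_{\langle n, i\rangle} \cap \mathcal{E}(X) = \emptyset$ when $n \notin X$, we have $v = \mathcal{E}(X)(m) = X(n)$. The procedure is uniform in $n$ and never queries bit $n$ of the oracle, so $X$ is autoreducible. The only delicate step is arranging that $\Phi_n$ is a generic description of $\mathcal{I}(X)$ rather than of the set $\mathcal{I}(X \setminus \{n\})$ we actually compute; that is handled by simply leaving $\Phi_n$ undefined on the finite set $I_n$, which costs nothing in density. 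The rest is a routine density calculation together with the fact that $\mathcal{H}$ redundantly codes each bit of $X$ into infinitely many blocks of $\mathcal{E}(X)$.
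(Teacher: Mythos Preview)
Your proof is correct and follows essentially the same approach as the paper's: define a generic description of $\mathcal{I}(X)$ from $X\setminus\{n\}$ by leaving the values on $I_n$ undefined, apply the witnessing enumeration operator $W$, and then read off $X(n)$ from any value of the resulting generic description of $\mathcal{E}(X)$ that lands in some $I_{\langle n,i\rangle}$. You give a bit more detail on the density argument showing such a value must appear, but the structure is the same.
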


\begin{proof}
Let $I_n$ be as in the definition of $\mathcal{I}$. Suppose that $W$ witnesses
that $\mathcal{E}(X) \leq\sub{g} \mathcal{I}(X)$. We can assume that $W^Z$ is the graph of
a partial function for every oracle $Z$. Define a Turing functional
$\Theta$ as follows. Given an oracle $Y$ and an input $n$, let
$\Phi(k)=Y(m)$ if $k \in I_m$ and $m \neq n$, and let
$\Phi(k)\diverges$ if $k \in I_n$. Let $\Psi$ be the partial function
with graph $W^{\gr(\Phi)}$. Search for an $i$ and a $k \in I_{\langle
n,i \rangle}$ such that $\Psi(k)\converges$. If such numbers are
found then let $\Theta^Y(n)=\Psi(k)$. If $Y=X \setminus \{n\}$ then
$\Phi$ is a generic description of $\mathcal{I}(X)$, so $\Psi$ is a generic
description of $\mathcal{E}(X)$, and hence $\Theta^Y(n)\converges=X(n)$. Thus
$X$ is autoreducible.
\end{proof}

We finish this section by showing that, for both the uniform
and the nonuniform coarse degrees, coarsenings of the appropriate type preserve
joins but do not always preserve existing meets.

\begin{prop}
Let $\mathcal{C}$ be a coarsening. Then $\mathcal{C}(A \oplus B)$ is the least upper bound
of $\mathcal{C}(A)$ and $\mathcal{C}(B)$ in the nonuniform coarse degrees. The same holds
for the uniform coarse degrees if $\mathcal{C}$ is a uniform coarsening.
\end{prop}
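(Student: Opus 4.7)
The plan is to handle the upper-bound direction and the least-upper-bound direction separately, in each of the nonuniform and uniform settings, leveraging Proposition \ref{coarseningsprop} throughout. For the upper-bound direction, since $A \leq\sub{T} A \oplus B$ and $B \leq\sub{T} A \oplus B$, part 1 of Proposition \ref{coarseningsprop} gives $\mathcal{C}(A), \mathcal{C}(B) \leq\sub{nc} \mathcal{C}(A \oplus B)$ immediately, and under the uniform coarsening hypothesis part 2 of the same proposition upgrades these to $\leq\sub{uc}$.

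For the nonuniform least-upper-bound direction, suppose $\mathcal{C}(A) \leq\sub{nc} Z$ and $\mathcal{C}(B) \leq\sub{nc} Z$. Given any coarse description $D$ of $Z$, I would use these hypotheses to obtain coarse descriptions $D_A$ and $D_B$ of $\mathcal{C}(A)$ and $\mathcal{C}(B)$ computable from $D$. Since $\mathcal{C}$ is a coarsening, $A \leq\sub{T} D_A$ and $B \leq\sub{T} D_B$, whence $A \oplus B \leq\sub{T} D$. Therefore $\mathcal{C}(A \oplus B) \leq\sub{T} A \oplus B \leq\sub{T} D$, so $D$ in fact computes $\mathcal{C}(A \oplus B)$ exactly, which is trivially a coarse description of itself. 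This gives $\mathcal{C}(A \oplus B) \leq\sub{nc} Z$, as required.

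For the uniform case more care is needed to preserve uniformity. Assume $\mathcal{C}$ is a uniform coarsening with associated binary functional $\Gamma$, and suppose $\Phi_A$ and $\Phi_B$ witness $\mathcal{C}(A) \leq\sub{uc} Z$ and $\mathcal{C}(B) \leq\sub{uc} Z$. Given a coarse description $D$ of $Z$, the sets $\Phi_A^D$ and $\Phi_B^D$ are coarse descriptions of $\mathcal{C}(A)$ and $\mathcal{C}(B)$; applying $\Gamma$ I would extract from $\Phi_A^D$ approximations $A_0, A_1, \dots$ with $A_s = A$ for cofinitely many $s$, and similarly approximations $B_s$ with $B_s = B$ cofinitely, so that $A_s \oplus B_s = A \oplus B$ for cofinitely many $s$. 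Fixing a Turing functional $\Psi$ (depending on $A$ and $B$, but not on $D$) with $\Psi^{A \oplus B} = \mathcal{C}(A \oplus B)$, I would then mimic the search argument in the proof of Proposition \ref{coarseningsprop}(2): on input $n$, output $\Psi^{A_s \oplus B_s}(n)$ for the least $s > n$ such that this converges. The result agrees with $\mathcal{C}(A \oplus B)$ cofinitely, hence is a coarse description of it, and the whole process is uniform in $D$. The only thing to be careful about is bookkeeping of uniformities: the fixed $\Phi_A$, $\Phi_B$, $\Gamma$, $\Psi$ do not depend on the variable oracle $D$, so their composition is a single Turing functional witnessing $\mathcal{C}(A \oplus B) \leq\sub{uc} Z$. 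I do not anticipate any genuine obstacle beyond this tracking.
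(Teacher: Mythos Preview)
Your proposal is correct and follows essentially the same approach as the paper's proof: invoke Proposition~\ref{coarseningsprop} for the upper-bound direction, chase Turing reductions through a coarse description for the nonuniform least-upper-bound direction, and in the uniform case use the approximating sequences $A_s,B_s$ from the uniform coarsening together with a fixed functional computing $\mathcal{C}(A\oplus B)$ from $A\oplus B$ and the same ``search for the least $s>n$ with convergence'' trick. Your explicit remark that $\Phi_A$, $\Phi_B$, $\Gamma$, and $\Psi$ are fixed independently of $D$ is exactly the uniformity bookkeeping the paper leaves implicit.
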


\begin{proof}
By Proposition \ref{coarseningsprop} we know that $\mathcal{C}(A \oplus B)$ is
an upper bound for $\mathcal{C}(A)$ and $\mathcal{C}(B)$ in both the uniform  and
 nonuniform coarse degrees.  Let us show that it is the least upper
bound. If $\mathcal{C}(A),\mathcal{C}(B) \leq\sub{nc} G$ then every coarse description $D$
of $G$ computes both $A$ and $B$, so $D \geq\sub{T} A \oplus B
\geq\sub{T} \mathcal{C}(A \oplus B)$. Thus $G \geq\sub{nc} \mathcal{C}(A \oplus B)$.

Finally, assume that $\mathcal{C}$ is a uniform coarsening and let
$\mathcal{C}(A),\mathcal{C}(B) \leq\sub{uc} G$. Let $\Phi$ be a Turing
functional such that $\Phi^{A \oplus B}=\mathcal C(A \oplus B)$.
Every coarse description $H$ of $G$ uniformly
computes coarse descriptions $D_1$ of $\mathcal{C}(A)$ and $D_2$ of $\mathcal{C}(B)$.  
Since $\mathcal{C}$ is uniform, there are Turing functionals $\Gamma$ and $\Delta$
such that, letting $A_s(n)=\Gamma^{D_1}(n,s)$ and
$B_s(n)=\Gamma^{D_2}(n,s)$, we have that $A \oplus B = A_s \oplus B_s$
for all sufficiently large $s$. Let $E$ be
defined as follows. Given $n$, search for an $s \geq n$ such that
$\Phi^{A_s \oplus B_s}(n)\converges$, and let $E(n)=\Phi^{A_s \oplus
B_s}(n)$. If $n$ is sufficiently large, then $E(n)=\Phi^{A \oplus
B}(n)=\mathcal C(A \oplus B)(n)$, so $E$ is a coarse description of
 $\mathcal C(A \oplus B)$. Since $E$ is obtained uniformly from $H$,
   we have that $\mathcal{C}(A \oplus B) \leq\sub{uc} G$.
\end{proof}

\begin{lem}
Let $\mathcal{C}$ be a uniform coarsening and let $Y \leq\sub{T} X$. Then $Y
\leq\sub{uc} \mathcal{C}(X)$.
\end{lem}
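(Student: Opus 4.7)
The plan is to essentially reuse the argument from part 2 of Proposition \ref{coarseningsprop}, but with $Y$ in place of $\mathcal{C}(B)$. Since $\mathcal{C}$ is a uniform coarsening, fix the binary Turing functional $\Gamma$ from Definition \ref{coarseningdefn}; and since $Y \leq\sub{T} X$, fix a Turing functional $\Psi$ with $\Psi^X = Y$. The goal is to manufacture a single functional $\Phi$ that, applied to any coarse description $D$ of $\mathcal{C}(X)$, outputs a coarse description of $Y$.

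The natural definition is: on oracle $D$ and input $n$, compute $X_s(k) = \Gamma^D(k,s)$ and search for the least $s \geq n$ with $\Psi^{X_s}(n) \converges$, and then set $\Phi^D(n) = \Psi^{X_s}(n)$. To see that this works when $D$ is a coarse description of $\mathcal{C}(X)$, use the defining property of $\Gamma$ to fix $s_0$ with $X_s = X$ for all $s \geq s_0$. For any $n \geq s_0$, every $s$ encountered in the search satisfies $X_s = X$, and the search must terminate since $\Psi^X(n) = Y(n)$ converges; whatever $s$ is found thus yields $\Phi^D(n) = \Psi^X(n) = Y(n)$. For $n < s_0$ the search still terminates (by the same argument once $s \geq s_0$), so $\Phi^D$ is total. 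Hence $\Phi^D$ agrees with $Y$ on all $n \geq s_0$, and is in particular a coarse description of $Y$.

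Since $\Phi$ is defined uniformly in $D$ (it depends only on the fixed $\Gamma$ and $\Psi$, not on $D$ or on the threshold $s_0$), this gives $Y \leq\sub{uc} \mathcal{C}(X)$. There is no real obstacle here: the only subtlety is making sure the search always terminates, which is immediate from the fact that $X_s = X$ cofinitely and $\Psi^X$ is total.
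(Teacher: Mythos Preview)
Your approach is exactly the paper's, and the core argument is correct. There is one technical slip, however: ``search for the least $s \geq n$ with $\Psi^{X_s}(n)\converges$'' is not a computable procedure as written, since you cannot certify divergence of $\Psi^{X_s}(n)$ before moving on to $s+1$; if $\Psi^{X_n}(n)\diverges$ for some $n < s_0$ (which can happen, since $X_n$ need not equal $X$), your search hangs and $\Phi^D(n)$ is undefined. The paper fixes this by dovetailing: it searches for the least \emph{pair} $\langle s,t\rangle$ with $s \geq n$ and $\Psi^{X_s}(n)[t]\converges$, which is a computable search and still yields $\Psi^X(n)=Y(n)$ for all $n \geq s_0$. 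With that adjustment your proof is identical to the paper's.
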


\begin{proof}
Let $\Phi$ be a Turing functional such that $\Phi^X = Y$. Let $D$ be a
coarse description of $\mathcal{C}(X)$ and let $A_s$ be as in Definition
\ref{coarseningdefn}. Now define $G(n)$ to be the value of
$\Phi^{A_s}(n)$ for the least pair $\langle s,t \rangle$ such that $s
\geq n$ and $\Phi^{A_s}(n)[t]\converges$.  Then $G =^* Y$, so $G$ is a
coarse description of $Y$.
\end{proof}

\begin{prop}
Let $\mathcal{C}$ be a coarsening. Then $\mathcal{C}$ does not always preserve existing
meets in the nonuniform coarse degrees.  The same holds for the uniform coarse
degrees if $\mathcal{C}$ is a uniform coarsening.
\end{prop}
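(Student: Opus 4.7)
The plan is to produce a Turing minimal pair $A, B$ of non-coarsely-computable sets satisfying $\rho(A \triangle B) = 0$. Setting $Y := A$, we have that $A \leq\sub{T} A$ is trivially a coarse description of $Y$, and $B \leq\sub{T} B$ is a coarse description of $Y$ since $\rho(Y \triangle B) = \rho(A \triangle B) = 0$. The proposition then follows from the key characterization: for any coarsening $\mathcal{C}$ and any sets $X, Z$, we have $Z \leq\sub{nc} \mathcal{C}(X)$ if and only if $Z$ has some coarse description computable from $X$. The forward direction uses that $\mathcal{C}(X) \leq\sub{T} X$ is itself a coarse description of $\mathcal{C}(X)$; the backward direction uses that every coarse description of $\mathcal{C}(X)$ computes $X$, since $\mathcal{C}$ is a coarsening. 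In the uniform case, the preceding lemma applied to a coarse description $Z_0 \leq\sub{T} X$ of $Z$ gives $Z_0 \leq\sub{uc} \mathcal{C}(X)$, and since $Z$ and $Z_0$ share the same coarse descriptions (by subadditivity of density under symmetric difference), this upgrades to $Z \leq\sub{uc} \mathcal{C}(X)$.

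Granting the construction, by the characterization $Y \leq\sub{nc} \mathcal{C}(A)$ and $Y \leq\sub{nc} \mathcal{C}(B)$ (and $\leq\sub{uc}$ in the uniform case). Since $Y = A$ is not coarsely computable, $Y$ lies strictly above the least coarse degree, whereas $\mathcal{C}(A \wedge B) = \mathcal{C}(\mathbf{0}_{\textup{T}})$ is the least coarse degree. Hence $Y$ witnesses that $\mathcal{C}(A \wedge B)$ is not the coarse meet of $\mathcal{C}(A)$ and $\mathcal{C}(B)$, so $\mathcal{C}$ does not preserve this existing meet.

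The construction of $A, B$ will be a finite-injury priority argument satisfying the minimal-pair requirements $P_{e,i}$ (if $\Phi_e^A = \Phi_i^B$ is total then the common function is computable), the non-coarse-computability requirements $N_e$ ($\rho(A \triangle R_e) > 0$ for each computable $R_e$), and the invariant $\rho(A \triangle B) = 0$. Disagreements between $A$ and $B$ will be restricted to sparse positions, say along a sequence $n_0 < n_1 < \cdots$ with $n_k \geq k!$, making $A \triangle B$ automatically of density $0$; the non-coarse-computability of $A$ will be secured at agreement positions by arranging that $A$ disagrees with each computable $R_e$ on a positive-density set. An additional structural point is that $A \triangle B$ must come out noncomputable, for otherwise $T := \omega \setminus (A \triangle B)$ would be computable, the shared restriction $A \uhr T = B \uhr T$ would be Turing-below both $A$ and $B$ and hence computable by the minimal-pair property, and $A$ would then be density-$0$-close to the extension of this computable restriction, contradicting $N_e$. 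The main obstacle is carrying out the priority argument to balance these competing requirements: the density-$0$ ``disagreement budget'' is enough to accommodate infinitely many minimal-pair diagonalizations, the density-$1$ agreement set gives ample room to defeat each computable coarse description of $A$, and the noncomputability of $A \triangle B$ emerges naturally from the priority structure, since the stages at which the various $P_{e,i}$ act depend noncomputably on the priority list.
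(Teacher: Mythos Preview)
Your reduction is sound: if a Turing minimal pair $A,B$ with $\rho(A\triangle B)=0$ and $A$ not coarsely computable exists, then $A\leq\sub{nc}\mathcal C(A),\mathcal C(B)$ (and $\leq\sub{uc}$ in the uniform case, via $A\equiv\sub{uc}B$), so $\mathcal C$ fails to preserve the Turing meet $\mathbf 0$. This is a genuinely different route from the paper, which instead takes $X,Y$ relatively $2$-random and $\Delta^0_3$ (automatically a Turing minimal pair) and invokes the forward reference to Theorem~\ref{2randthm} to get a non--coarsely-computable set below both $\mathcal C(X)$ and $\mathcal C(Y)$.

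However, the construction you sketch has a real gap. Your own observation that $A\triangle B$ must be noncomputable is too weak: in fact $A\triangle B$ cannot be contained in \emph{any} computable density-$0$ set $S_0$, since then $A\cap\overline{S_0}=B\cap\overline{S_0}$ would be computable from both $A$ and $B$, hence computable by the minimal-pair property, and would coarsely describe $A$. So the phrase ``disagreements restricted to sparse positions $n_0<n_1<\cdots$ with $n_k\geq k!$'' cannot mean a sequence fixed in advance or produced by any effective procedure with a fixed budget --- and ``finite-injury priority argument'' strongly suggests just that. More concretely, the standard minimal-pair step for $P_{e,i}$ locates extensions $\sigma'\succeq\sigma_s$, $\tau'\succeq\tau_s$ with $\Phi_e^{\sigma'}(n)\neq\Phi_i^{\tau'}(n)$; these may differ on an arbitrarily long block, and you give no mechanism for achieving the diagonalization while keeping $\rho_n(A\triangle B)$ small for \emph{all} $n$ in the affected range (padding afterward does not control the intermediate densities). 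Restricting to single-bit-different extensions does not obviously suffice either, since the ``no splitting'' case then only constrains such extensions, not the eventual $A,B$.

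Such $A,B$ do exist --- for instance, take $A$ to be $1$-generic and use (an iterated form of) the cone-avoiding compactness Theorem~\ref{compthm} to build a coarse description $B$ of $A$ that avoids every noncomputable set below $A$ --- but that is a substantially different construction from the one you outline, and it leans on machinery comparable in weight to what the paper uses. As written, the priority-argument sketch does not close.
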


\begin{proof}
Let $X,Y$ be relatively $2$-random and $\Delta^0_3$. Then $X$ and $Y$
form a minimal pair in the Turing degrees, while $X$ and $Y$ do not
form a minimal pair in the nonuniform coarse degrees by Theorem
\ref{2randthm} below. Since every coarse description of $\mathcal{C}(X)$
computes $X$ we see that $\mathcal{C}(X) \geq\sub{nc} X$ and $\mathcal{C}(Y) \geq\sub{nc}
Y$. Therefore $\mathcal{C}(X)$ and $\mathcal{C}(Y)$ also do not form a minimal pair in the
nonuniform coarse degrees.

Next, let $\mathcal{C}$ be a uniform coarsening. We have seen above that there
exists some $A \leq\sub{nc} \mathcal{C}(X),\mathcal{C}(Y)$ that is not coarsely
computable. Then $A \leq\sub{T} X,Y$, so $A \leq\sub{uc} \mathcal{C}(X),\mathcal{C}(Y)$ by
the previous lemma. Thus, $\mathcal{C}(X)$ and $\mathcal{C}(Y)$  do not form a minimal
pair in the uniform coarse degrees.
\end{proof}

\section{Randomness, $K$-triviality, and robust information coding}
\label{Ktriv}

It is reasonable to expect that the embeddings induced by $\mathcal{E}$
(or equivalently, by any uniform coarsening) are not
surjective. Indeed, if $ \mathcal{E}(A) \leq\sub{uc} X$ then the information
represented by $A$ is coded into $X$ in a fairly redundant way. If $A$
is noncomputable, it should follow that $X$ cannot be random. As we
will see, we can make this intuition precise.

\begin{defn}
Let $X^{\mathfrak c}$ be the set of all $A$ such that $A$ is
computable from every coarse description of $X$.
\end{defn}

We will show that if $X$ is weakly $2$-random then $X^{\mathfrak c} =
{\bf 0}$, and hence $ \mathcal{E}(A) \nleq\sub{nc} X$ for all noncomputable $A$
(since every coarse description of $\mathcal{E}(A)$ computes $A$). Since no
$1$-random set can be coarsely computable, it will follow that $X
\not\equiv\sub{nc} \mathcal{E}(B)$ and $X \not\equiv\sub{uc} \mathcal{E}(B)$ for all
$B$. We will first prove the following theorem. Let $\mathcal K$ be
the class of $K$-trivial sets. (See \cite{DH} or \cite{N} for more on
$K$-triviality.)

\begin{thm}
\label{1randandK}
If $X$ is $1$-random then $X^{\mathfrak c} \subseteq \mathcal K$.
\end{thm}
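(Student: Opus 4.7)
The plan is to prove the contrapositive: assume $A \notin \mathcal{K}$, and construct a coarse description $D$ of $X$ with $A \not\leq\sub{T} D$, contradicting $A \in X^{\mathfrak c}$. I would build $D$ by a tree-based diagonalization against all Turing functionals $\Phi_e$: for each $e$, ensure either that $\Phi_e^D$ is not total or that $\Phi_e^D \neq A$. The underlying tree consists of ``partial coarse descriptions'' -- finite strings $\sigma$ of length $m$ whose Hamming distance from $X \uhr m$ is at most $\epsilon m$, for a rational tolerance $\epsilon > 0$ that is tightened as the construction proceeds -- so that every path along which the tolerance shrinks to $0$ is a coarse description of $X$.

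The crux is the ``compactness theorem for cone-avoiding descriptions'' alluded to in the introduction: for any $\Phi$ and any node $\sigma$ in the tree, there is an extension $\tau$ in the tree above $\sigma$ such that every path $D$ through the tree extending $\tau$ satisfies $\Phi^D \neq A$, either by partiality or by explicit disagreement on some bit. Granting this, the theorem follows by iterating the lemma over $\Phi_0, \Phi_1, \ldots$, each time refining to a subtree where the corresponding requirement is met, and extracting any path through the resulting tree.

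To prove the cone-avoidance lemma, argue by contradiction: suppose that above some $\sigma$ no such $\tau$ exists, so every path $D$ through the tree above $\sigma$ satisfies $\Phi^D = A$. The key counting input is that the number of length-$m$ binary strings within Hamming distance $\epsilon m$ of any fixed string is at most $2^{H(\epsilon) m}$, where $H$ denotes binary entropy, and $H(\epsilon) \to 0$ as $\epsilon \to 0$. Using the assumed uniformity of $\Phi$ across this Hamming ball around $X \uhr m$, I would build a prefix-free machine $M$ that, on input a description of $X \uhr m$ together with a short selector of length at most $H(\epsilon) m$ identifying a particular string in the ball, outputs $A \uhr n$ via $\Phi$. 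Because $X$ is $1$-random, its initial segments satisfy $K(X \uhr m) \geq m - c$, so the $X$-dependent part of the description is already paid for by $X$'s own incompressibility; careful choice of the parameters $\epsilon$ and $m$ then reduces the residual cost of describing $A \uhr n$ to $K(n) + O(1)$, witnessing $A \in \mathcal{K}$ and yielding a contradiction.

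The principal technical difficulty is the quantitative compression argument: obtaining exactly the $K(n) + O(1)$ bound required for $K$-triviality (rather than a looser bound like $K(n) + O(\log n)$) calls for careful synchronization between $\epsilon$, $m$, and $n$, typically via a Solovay-style uniform family of machines combined with the universal Martin-L\"of test to absorb all non-$O(1)$ overhead. A secondary subtlety is verifying that the ``forcing'' in the cone-avoidance lemma can indeed be extracted from the hypothesis ``every path above $\sigma$ computes $A$ via $\Phi$'' in a way compatible with the shrinking-tolerance structure of the tree.
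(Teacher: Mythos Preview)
Your approach diverges from the paper's, and the compression step at its core does not work as described.

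The paper argues in two stages. First, Theorem~\ref{compthm} is a pure forcing result with no randomness or complexity hypothesis: if for every $\epsilon>0$ there is a set $D_\epsilon$ with $\overline{\rho}(X\triangle D_\epsilon)\le\epsilon$ and $A\nleq\sub{T} D_\epsilon$, then some genuine coarse description of $X$ fails to compute $A$. Contraposing, if $A\in X^{\mathfrak c}$ then for some fixed $k$ we have $A\leq\sub{T} X^k_{\neq i}$ for every $i<k$ (notation of Definition~\ref{partdef}). Second, van Lambalgen's Theorem (Theorem~\ref{vlthm}) gives that each $X^k_i$ is $1$-random relative to $X^k_{\neq i}\equiv\sub{T} X^k_{\neq i}\oplus A$, so $X$ is $1$-random relative to $A$; since $A\leq\sub{T} X$, the set $A$ is a base for $1$-randomness and hence $K$-trivial by Hirschfeldt--Nies--Stephan. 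No direct bound on $K(A\uhr n)$ is ever attempted.

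Your plan instead tries to derive $K(A\uhr n)\le K(n)+O(1)$ directly from the failure of the cone-avoidance lemma, and this is where the gap lies. You write that because $K(X\uhr m)\ge m-c$, ``the $X$-dependent part of the description is already paid for by $X$'s own incompressibility.'' This is backwards: incompressibility makes $X\uhr m$ \emph{expensive} to describe, not free. A description of $A\uhr n$ built from a description of $X\uhr m$ plus an $H(\epsilon)m$-bit selector has length at least $K(X\uhr m)+H(\epsilon)m\approx (1+H(\epsilon))m$, which is far larger than $K(n)$; there is no mechanism here that converts a \emph{lower} bound on $K(X\uhr m)$ into an \emph{upper} bound on $K(A\uhr n)$. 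A secondary problem: the negation of ``some $\tau\succeq\sigma$ forces $\Phi^D\neq A$ along every path through $\tau$'' is only ``every $\tau\succeq\sigma$ admits \emph{some} path $D$ with $\Phi^D=A$,'' not ``every path above $\sigma$ satisfies $\Phi^D=A$,'' so your contradiction hypothesis is stronger than the lemma's failure actually yields. The paper's Theorem~\ref{compthm} sidesteps both issues by fixing an external witness $D_{\epsilon/5}$ close to $X$ but not computing $A$, and showing that if neither forcing case applies then $A\leq\sub{T} D_{\epsilon/5}$---a contradiction requiring no complexity estimates at all.
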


By Downey, Nies, Weber, and Yu \cite{DNWY}, if $X$ is weakly
$2$-random then it cannot compute any noncomputable $\Delta^0_2$
sets. Since $\mathcal K \subset \Delta^0_2$, our desired result
follows from Theorem \ref{1randandK}.

\begin{cor}
\label{weak2randcor}
If $X$ is weakly $2$-random then $X^{\mathfrak c} = {\bf 0}$, and
hence $ \mathcal{E}(A) \nleq\sub{nc} X$ for all noncomputable $A$. In
particular, in both the uniform and nonuniform coarse degrees, the
degree of $X$ is not in the image of the embedding induced by
$\mathcal{E}$.
\end{cor}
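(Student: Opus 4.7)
The plan is to assemble Theorem \ref{1randandK} with the cited result of Downey, Nies, Weber, and Yu \cite{DNWY}, together with the standard fact that no $1$-random set is coarsely computable. First I would observe that weak $2$-randomness implies $1$-randomness, so Theorem \ref{1randandK} delivers $X^{\mathfrak c} \subseteq \mathcal K$. Now any $A \in X^{\mathfrak c}$ is automatically computable from $X$ itself (taking $X$ as its own coarse description), and is $K$-trivial, hence $\Delta^0_2$. The cited theorem of \cite{DNWY} says that a weakly $2$-random set computes no noncomputable $\Delta^0_2$ set, so $A$ must be computable. This yields $X^{\mathfrak c} = {\bf 0}$.

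To deduce $\mathcal{E}(A) \nleq\sub{nc} X$ for noncomputable $A$, I would unpack the definition of nonuniform coarse reducibility and use the observation, made immediately after Proposition \ref{Euniform}, that every coarse description of $\mathcal{E}(A)$ computes $A$. If $\mathcal{E}(A) \leq\sub{nc} X$, then every coarse description $D$ of $X$ would compute some coarse description of $\mathcal{E}(A)$ and therefore also $A$; but then $A \in X^{\mathfrak c} = {\bf 0}$, contradicting the hypothesis that $A$ is noncomputable.

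For the final sentence I would argue by contradiction. Suppose $X \equiv\sub{uc} \mathcal{E}(B)$ or $X \equiv\sub{nc} \mathcal{E}(B)$ for some $B$. In either case $\mathcal{E}(B) \leq\sub{nc} X$, since uniform coarse reducibility implies nonuniform coarse reducibility. By the previous step, $B$ must then be computable, so $\mathcal{E}(B)$ is computable and in particular coarsely computable, placing it in the least coarse degree. On the other hand, $X$ is $1$-random, and no $1$-random set is coarsely computable, so $X$ is not in the least coarse degree in either framework, a contradiction.

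There is no genuine obstacle here beyond invoking the preceding results correctly; the two points worth highlighting are that $X$ is trivially a coarse description of itself, so membership in $X^{\mathfrak c}$ genuinely yields Turing computability from $X$, and that every coarse description of $\mathcal{E}(A)$ decodes $A$ exactly, which is precisely what makes $\mathcal{E}$ a coarsening and what allows the $K$-trivial bound from Theorem \ref{1randandK} to be promoted to the statement about the embedding induced by $\mathcal{E}$.
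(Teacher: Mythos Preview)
Your proposal is correct and follows essentially the same route as the paper: apply Theorem \ref{1randandK} to get $X^{\mathfrak c} \subseteq \mathcal K \subset \Delta^0_2$, invoke the result of \cite{DNWY} that weakly $2$-random sets compute no noncomputable $\Delta^0_2$ set (using that $X$ is a coarse description of itself), then unwind the definition of $\mathcal{E}$ as a coarsening and the fact that no $1$-random set is coarsely computable to obtain the remaining assertions. The paper presents exactly this argument in the discussion surrounding the corollary rather than as a separate formal proof.
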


To prove Theorem \ref{1randandK}, we use the fact, established by
Hirschfeldt, Nies, and Stephan \cite{HNS}, that $A$ is $K$-trivial
if and only if $A$ is a base for $1$-randomness, that is, $A$ is  computable in a set that
is $1$-random relative to $A$. The basic idea is to show that if $X$
is $1$-random and $A \in X^{\mathfrak c}$, then for each $k > 1$ there is a way to
partition $X$ into $k$ many ``slices'' $X_0,\ldots,X_{k-1}$ such that for each
$i<k$, we have $A \leq\sub{T} X_0 \oplus \cdots \oplus X_{i-1} \oplus X_{i+1}
\oplus \cdots \oplus X_{k-1}$ (where the right hand side of this
inequality denotes $X_1 \oplus \cdots \oplus X_{k-1}$ when $i=0$ and
$X_0 \oplus \cdots \oplus X_{k-2}$
when $i = k-1$). It will then follow by van Lambalgen's
Theorem (which will be discussed below) that each $X_i$ is $1$-random
relative to $X_0 \oplus \cdots \oplus X_{i-1} \oplus X_{i+1} \oplus \cdots
\oplus X_{k-1} \oplus A$, and hence, again by van Lambalgen's Theorem,
that $X$ is $1$-random relative to $A$. Since $A \in X^{\mathfrak c}$
implies that $A \leq\sub{T} X$, we will conclude that $A$ is a base
for $1$-randomness, and hence is $K$-trivial. We begin with some notation
for certain partitions of $X$.

\begin{defn}
\label{partdef}
Let $X \subseteq \omega$. For an infinite subset $Z=\{z_0<z_1<\cdots\}$ of $\omega$, 
let $X \uhr Z = \{n : z_n \in X\}$. For $k>1$ and $i<k$, define $$X^k_i = X
\uhr \{n : n \equiv i \bmod k\}\quad \textrm{and}\quad X^k_{\neq i} =
X \uhr \{n : n \not\equiv i \bmod k\}.$$ Note that $X^k_{\neq i}
\equiv\sub{T} X \setminus \{n : n \equiv i \bmod k\}$ and
$\overline{\rho}(X \triangle (X \setminus \{n : n \equiv i \bmod k\}))
\leq \frac{1}{k}$.
\end{defn}

Van Lambalgen's Theorem \cite{vL} states that $Y \oplus Z$ is
$1$-random if and only if $Y$ and $Z$ are relatively $1$-random. The
proof of this theorem shows, more generally, that if $Z$ is
computable, infinite, and coinfinite, then $X$ is $1$-random if and
only if $X \uhr Z$ and $X \uhr \overline{Z}$ are relatively
$1$-random. Relativizing this fact and applying induction, we get the
following version of van Lambalgen's Theorem.

\begin{thm}[van Lambalgen \cite{vL}]
\label{vlthm}
The following are equivalent for all sets $X$ and $A$, and all $k>1$.
\begin{enumerate}[\rm 1.]

\item $X$ is $1$-random relative to $A$.

\item For each $i<k$, the set $X^k_i$ is $1$-random relative to
$X^k_{\neq i} \oplus A$.

\end{enumerate}
\end{thm}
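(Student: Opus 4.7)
The plan is to reduce the theorem to the classical (two-fold) van Lambalgen's theorem by passing through a computable rearrangement of coordinates on $2^\omega$.

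First, I would observe that the map $X \mapsto (X^k_0, X^k_1, \ldots, X^k_{k-1})$ from $2^\omega$ to $(2^\omega)^k$ is a computable, measure-preserving bijection (its inverse recovers $X(n)$ as $X^k_{n \bmod k}(\lfloor n/k \rfloor)$). Consequently, $X$ is $1$-random relative to $A$ if and only if $X^k_0 \oplus X^k_1 \oplus \cdots \oplus X^k_{k-1}$ is $1$-random relative to $A$. For the same reason, $X^k_{\neq i} \equiv\sub{T} \bigoplus_{j \neq i} X^k_j$ uniformly in $i$.

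Second, I would prove the iterated version of van Lambalgen: $Y_0 \oplus \cdots \oplus Y_{k-1}$ is $1$-random relative to $A$ if and only if, for every $i < k$, the set $Y_i$ is $1$-random relative to $\bigl(\bigoplus_{j \neq i} Y_j\bigr) \oplus A$. This is established by induction on $k$, the base case $k=2$ being the classical relativized van Lambalgen theorem. Setting $Y_i = X^k_i$ and combining with the first step gives the desired equivalence of (1) and (2), since by the first step $X^k_{\neq i} \oplus A$ is Turing equivalent, uniformly, to $\bigl(\bigoplus_{j \neq i} X^k_j\bigr) \oplus A$, and relativizing $1$-randomness respects Turing equivalence of the oracle.

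The only real work is the inductive step of the iterated van Lambalgen statement, and it is routine. Going from $k$ to $k+1$, one applies the two-fold version to split off $Y_k$, obtaining that $Y_k$ is $1$-random relative to $\bigl(\bigoplus_{j<k} Y_j\bigr) \oplus A$ and that $\bigoplus_{j<k} Y_j$ is $1$-random relative to $Y_k \oplus A$; then the inductive hypothesis applied relative to $Y_k \oplus A$ peels off the remaining summands, yielding the full list of relative randomness conditions. The main potential pitfall is keeping track of which oracles appear at each relativization step, but no new ideas beyond the two-fold theorem are needed.
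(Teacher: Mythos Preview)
Your proposal is correct and follows essentially the same approach as the paper: the paper does not give a detailed proof but simply remarks that the classical van Lambalgen theorem (in the form that $X$ is $1$-random iff $X \uhr Z$ and $X \uhr \overline{Z}$ are relatively $1$-random for any computable infinite coinfinite $Z$) yields the result by relativizing and applying induction, which is exactly what you carry out in detail. Your first step (the computable measure-preserving bijection) is just a rephrasing of the paper's generalized two-fold statement, and your inductive argument is the standard one the paper alludes to.
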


The last ingredient we need for the proof of Theorem \ref{1randandK}
is a kind of compactness principle, which will also be used to yield
further results in the next section, and is of independent interest
given its connection with the following concept defined in \cite{HJMS}.

\begin{defn}
\label{bounddef}
Let $r \in [0,1]$. A set $X$ is \emph{coarsely computable at density
$r$} if there is a computable set $C$ such that $\overline{\rho}(X
\triangle C) \leq $1$-r$. The \emph{coarse computability
bound} of $X$ is $$\gamma(X) = \sup\{r : X \textrm{ is coarsely
computable at density } r\}.$$
\end{defn}

As noted in \cite{HJMS}, there are sets $X$ such that $\gamma(X)=1$
but $X$ is not coarsely computable. In other words, there is no principle of
``compactness of computable coarse descriptions''. (Although
 Miller (see \cite[Theorem 5.8]{HJMS}) showed that  one can in fact
recover such a principle by adding a further effectivity condition to
the requirement that $\gamma(X)=1$.) The following theorem shows that
if we replace ``computable'' by ``cone-avoiding'', the situation is
different.

\begin{thm}
\label{compthm}
Let $A$ and $X$ be arbitrary sets. Suppose that for each $\epsilon >
0$ there is a set $D_\epsilon$ such that $\overline{\rho}(X \triangle
D_\epsilon) \leq \epsilon$ and $A \nleq\sub{T} D_\epsilon$. Then
there is a coarse description $D$ of $X$ such that $A \nleq\sub{T} D$.
\end{thm}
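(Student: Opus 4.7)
The plan is to build $D$ through a finite-extension forcing construction in stages $e = 0, 1, \ldots$, where stage $e$ diagonalizes against the Turing functional $\Phi_e$ while tightening a density bound. Forcing conditions will be pairs $(\sigma, \epsilon)$ with $\sigma \in 2^{<\omega}$ and rational $\epsilon > 0$, witnessed by a \emph{realization}: an infinite $\hat D \succeq \sigma$ with $\overline{\rho}(\hat D \triangle X) \leq \epsilon$ and $A \nleq\sub{T} \hat D$. First I would verify that every such pair is automatically a condition: from the hypothesized $D_\epsilon$, the set $\hat D := \sigma \cup (D_\epsilon \uhr [|\sigma|, \infty))$ differs from $D_\epsilon$ only finitely, hence has the same Turing degree and the same value of $\overline{\rho}(\cdot \triangle X)$.

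At stage $e$ with current string $\sigma_e$, I would first do the standard case split on $\Phi_e$: in Case A, extend $\sigma_e$ to some $\tau$ on which $\Phi_e^\tau(n)\converges \neq A(n)$ for some $n$; in Case B, there is no such extension, so $\Phi_e^\tau(n)\converges$ implies $\Phi_e^\tau(n) = A(n)$ for all $\tau \succeq \sigma_e$ and $n$. Since $A$ must be noncomputable (else $A \leq\sub{T} D_\epsilon$, contradicting the hypothesis), the c.e.\ set $\{n : \exists \tau \succeq \sigma_e,\ \Phi_e^\tau(n)\converges\}$ cannot exhaust $\omega$, so Case B yields some $n$ witnessing $\Phi_e^D(n)\diverges$ for every $D \succeq \sigma_e$. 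Call the resulting string $\tau_e$, of length $L_e'$. Next, in the density substep, I would pick a realization $\hat D_e$ of $(\tau_e, 2^{-(e+2)})$, let $M_e := \max(L_e',\ \max\{m : \rho_m(\hat D_e \triangle X) > 2^{-(e+1)}\} + 1)$ (finite, by the asymptotic density bound), and form $\hat D_e'$ by modifying $\hat D_e$ on the finite range $[L_e', M_e)$ to agree with $X$. Since this is a finite modification, $\hat D_e' \equiv\sub{T} \hat D_e$, so $A \nleq\sub{T} \hat D_e'$, and the initial segment $\tau_e$ is preserved. A direct calculation yields $\rho_m(\hat D_e' \triangle X) \leq \epsilon_e' + 2^{-(e+1)}$ for all $m \geq L_e'$, where $\epsilon_e' := |\tau_e \triangle X \uhr L_e'|/L_e'$. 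Set $\sigma_{e+1} := \hat D_e' \uhr L_{e+1}$ for some $L_{e+1} \geq \max(M_e, 2^{e+2} L_e')$, which drives the committed density bound to $0$ via the recursion $\epsilon_{e+1} \leq \epsilon_e' L_e'/L_{e+1} + 2^{-(e+1)}$.

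Letting $D := \bigcup_e \sigma_e$, the diagonalization ensures $A \nleq\sub{T} D$, and the bounds $\rho_m(D \triangle X) \leq \epsilon_e' + 2^{-(e+1)}$ on $[L_e', L_{e+1}]$ force $\overline{\rho}(D \triangle X) = 0$, so $D$ is a coarse description of $X$. The main obstacle is the intermediate-density issue: a realization's density bound is only asymptotic, so naively setting $\sigma_{e+1}$ to be an initial segment of $\hat D_e$ would leave the transient range before $\hat D_e$'s density settles with potentially bad $\rho_m(D \triangle X)$. The observation that unlocks the proof is that this transient is a finite set, so patching $\hat D_e$ with $X$'s bits on this finite window is a Turing-trivial modification that yields pointwise density control from $L_e'$ onward, without disturbing either the cone-avoidance of $\hat D_e$ or the diagonalization commitment in $\tau_e$.
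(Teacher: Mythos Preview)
Your diagonalization step (Case A) is where the argument breaks. You allow $\tau_e$ to be \emph{any} extension of $\sigma_e$ with $\Phi_e^{\tau_e}(n)\converges \neq A(n)$; nothing forces $\tau_e$ to stay close to $X$ on $[L_e, L_e')$. If $\Phi_e$ happens to converge to a wrong value only on oracles that disagree with $X$ on a long block after $\sigma_e$, the witnessing $\tau_e$ may have $L_e' \gg L_e$ with $\tau_e$ differing from $X$ on all of $[L_e, L_e')$, giving $\epsilon_e' = \rho_{L_e'}(\tau_e \triangle X)$ close to $1$. Your density substep repairs things from $L_e'$ onward, but does nothing for $m \in (L_e, L_e']$, where $\rho_m(D \triangle X)$ can then approach $1$, and your stated bound $\epsilon_e' + 2^{-(e+1)}$ on $[L_e', L_{e+1}]$ is useless when $\epsilon_e'$ is near $1$. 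Since this can occur at infinitely many stages, $\overline{\rho}(D \triangle X)$ need not be $0$. The recursion $\epsilon_{e+1} \leq \epsilon_e' L_e'/L_{e+1} + 2^{-(e+1)}$ only controls the density at the endpoints $L_{e+1}$, not the supremum over $(L_e, L_{e+1}]$; your ``observation that unlocks the proof'' patches the transient of the \emph{realization}, not of the diagonalizing string $\tau_e$ itself.

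The paper's proof confronts exactly this by restricting the convergence case to ``$\epsilon$-good'' extensions---those that stay within density $\epsilon$ of $X$ at every intermediate length---so the diagonalizing string automatically respects the density requirement. The price is that your clean Case B argument is no longer available: failure of the restricted Case 1 only says that $\Phi_e^\tau(n)\converges$ implies $\Phi_e^\tau(n)=A(n)$ for $\epsilon$-good $\tau$, and one cannot search over such $\tau$ computably without access to $X$. The essential idea you are missing is the paper's use of $D_{\epsilon/5}$: if neither the restricted convergence case nor the restricted divergence case applies, one shows $A \leq\sub{T} D_{\epsilon/5}$ by searching over strings close to $D_{\epsilon/5}$ (which \emph{is} $D_{\epsilon/5}$-recognizable) and invoking the triangle inequality to conclude they are $\epsilon$-good. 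Your construction uses the $D_\epsilon$ only to manufacture realizations, which is not where the hypothesis does its real work.
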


\begin{proof}
The basic idea is that, given a Turing functional $\Phi$ and a string
$\sigma$ that is ``close to'' $X$, we can extend $\sigma$ to a string
$\tau$ that is ``close to" $X$ such that $\Phi^D \neq A$ for all $D$
extending $\tau$ that are ``close to'' $X$.  We can take $\tau$ to be
any string ``close to" $X$ such that, for some $n$, either
$\Phi^\tau(n) \converges \neq A(n)$ or $\Phi^\gamma(n) \diverges$ for
all $\gamma$ extending $\tau$ that are ``close to" $X$.  If no such
$\tau$ exists, we can obtain a contradiction by arguing that $A
\leq\sub{T} D_\epsilon$ for sufficiently small $\epsilon$, since with
an oracle for $D_\epsilon$ we have access to many strings that are
``close to" $D_\epsilon$ and hence to $X$, by the triangle inequality
for Hamming distance.  In the above discussion the meaning of ``close
to" is different in different contexts, but the precise version will
be given below.  Further, as the construction proceeds, the meaning of
``close to" becomes so stringent that we guarantee that $\rho(X
\triangle D) = 0$.  We now specify the formal details.

We obtain $D$ as $\bigcup_e \sigma_e$, where $\sigma_e \in
2^{<\omega}$ and $\sigma_0 \subsetneq \sigma_1 \subsetneq \cdots$. In
order to ensure that $\rho(X \triangle D) = 0$, we require that for
all $e$ and all $m$ in the interval $[|\sigma_e|, |\sigma_{e+1}|]$,
either $D$ and $X$ agree on the interval $[|\sigma_e|, m)$ or $\rho_m
(X \triangle D) \leq 2^{-|\sigma_e|}$, with the latter true for $m =
|\sigma_{e+1}|$. This condition implies that $\rho_m (X \triangle D)
\leq 2^{-|\sigma_e|}$ for all $m \in [|\sigma_{e+1}|,
|\sigma_{e+2}|]$, and hence that $\rho(X \triangle D) = 0$.

Let $\sigma$ and $\tau$ be strings and let $\epsilon$ be a positive
real number.  Call $\tau$ an $\epsilon$-\emph{good} extension of
$\sigma$ if $\tau$ properly extends $\sigma$ and for all $m \in
[|\sigma|, |\tau|]$, either $X$ and $\tau$ agree on $[|\sigma|,m)$ or
$\rho_m (\tau \triangle X) \leq \epsilon$, with the latter true for
$m = |\tau|$.  In line with the previous paragraph, we require that
$\sigma_{e+1}$ be a $2^{-|\sigma_e|}$-good extension of $\sigma_e$
for all $e$.

At stage $0$, let $\sigma_0$ be the empty string. At stage $e+1$, we
are given $\sigma_e$ and choose $\sigma_{e+1}$ as follows so as to
force that $ A \neq \Phi_e^D$. Let $\epsilon = 2^{-|\sigma_e|}$.

\emph{Case 1}.  There is a number $n$ and a string $\tau$ that is an
$\epsilon$-good extension of $\sigma_e$ such that $\Phi^{\tau}_e(n)
\converges \neq A(n)$.  Let $\sigma_{e+1}$ be such a $\tau$.

\emph{Case 2}. Case 1 does not hold and there is a number $n$ and a
string $\beta$ that is an $\epsilon$-good extension of $\sigma_e$ such
that $|\beta| \geq |\sigma_e| + 2$ and $\Phi_e^\tau(n) \diverges$ for
all $\frac{\epsilon}{4}$-good extensions $\tau$ of $\beta$.  Let
$\sigma_{e+1}$ be such a $\beta$.

We claim that either Case 1 or Case 2 applies.  Suppose not.  Let
$D_{\frac{\epsilon}{5}}$ be as in the hypothesis of the lemma, so that
$\overline{\rho}(X \triangle D_{\frac{\epsilon}{5}}) \leq
\frac{\epsilon}{5}$ and $A \nleq\sub{T} D_{\frac{\epsilon}{5}}$. Let
$c \geq |\sigma_e| + 2$ be sufficiently large so that $\rho_m (X
\triangle D_{\frac{\epsilon}{5}}) \leq \frac{\epsilon}{4}$ for all $m
\geq c$ and $\sigma_e$ has an $\frac{\epsilon}{4}$-good extension
$\beta$ of length $c$. Note that the string obtained from $\sigma_e$
by appending a sufficiently long segment of $X$ starting with
$X(|\sigma_e|)$ is an $\frac{\epsilon}{4}$-good extension of
$\sigma_e$, so such a $\beta$ exists, and we assume it is obtained
in this manner.

We now obtain a contradiction by showing that $A \leq\sub{T}
D_{\frac{\epsilon}{5}}$.  To calculate $A(n)$ search for a string
$\gamma$ extending $\beta$ such that $\Phi_e^\gamma (n) \converges$,
say with use $u$, and $\rho_m (D_{\frac{\epsilon}{5}} \triangle
\gamma) \leq \frac{\epsilon}{2}$ for all $m \in [c,u)$.  We first
check that such a string $\gamma$ exists.  Since Case 2 does not
hold, there is a string $\tau$ that is an $\frac{\epsilon}{4}$-good
extension of $\beta$ such that $\Phi_e^\tau(n) \converges$.  We
claim that $\tau$ meets the criteria to serve as $\gamma$.  We need
only check that $\rho_m (D_{\frac{\epsilon}{5}} \triangle \tau) \leq
\frac{\epsilon}{2}$ for all $m \in [c,u)$.  Fix $m \in [c,u)$.  Then
$$\rho_m (D_{\frac{\epsilon}{5}} \triangle \tau) \leq \rho_m
(D_{\frac{\epsilon}{5}} \triangle X) + \rho_m (X \triangle \tau)
\leq \frac{\epsilon}{4} + \frac{\epsilon}{4} =
\frac{\epsilon}{2}.$$

Next we claim that $\gamma$ is an $\epsilon$-good extension of
$\sigma_e$.  The string $\gamma$ extends $\sigma_e$ since it extends
$\beta$, and $\beta$ extends $\sigma_e$. Let $m \in [|\sigma_e,
|\gamma|]$ be given.  If $m < c$, then $\gamma$ and $X$ agree on the
interval $[|\sigma_e|, m)$ because $\beta$ and $X$ agree on this
interval and $\gamma$ extends $\beta$. Now suppose that $m \geq c$.
Then
$$\rho_m(\gamma \triangle X) \leq \rho_m(\gamma \triangle
D_{\frac{\epsilon}{5}}) + \rho_m(D_{\frac{\epsilon}{5}} \triangle X)
\leq \frac{\epsilon}{2} + \frac{\epsilon}{4} < \epsilon.$$ Since
$\gamma$ is an $\epsilon$-good extension of $\sigma_e$ for which
$\Phi_e^\gamma(n) \converges$, and Case 1 does not hold, we conclude
that $\Phi_e^\gamma(n) = A(n)$. The search for $\gamma$ can be
carried out computably in $D_{\frac{\epsilon}{5}}$, so we conclude
that $A \leq\sub{T} D_{\frac{\epsilon}{5}}$, contradicting our choice
of $D_{\frac{\epsilon}{5}}$. (Although $\beta$ cannot be computed from
$D_{\frac{\epsilon}{5}}$, we may use it in our computation of $A(n)$
since it is a fixed string which does not depend on $n$.) This
contradiction shows that Case 1 or Case 2 must apply.

Let $D = \bigcup_n \sigma_n$.  Then $\rho(D \triangle X) = 0$, and $A
\nleq\sub{T} D$ since Case 1 or Case 2 applies at every stage.
\end{proof}

\begin{proof}[Proof of Theorem \ref{1randandK}]
Let $A \in X^{\mathfrak c}$. By Theorem \ref{compthm}, there is an
$\epsilon > 0$ such that $A \leq\sub{T} D_\epsilon$ whenever
$\overline{\rho}(X \triangle D_\epsilon) \leq \epsilon$. Let $k$ be an
integer such that $k > \frac{1}{\epsilon}$. As noted in
Definition \ref{partdef}, $X^k_{\neq i}$ is Turing equivalent to such
a $D_\epsilon$ for each $i < k$, so we have $A \leq\sub{T} X^k_{\neq i}$
for all $i<k$. By the unrelativized form of Theorem \ref{vlthm}, each
$X^k_i$ is $1$-random relative to $X^k_{\neq i}$, and hence relative
to $X^k_{\neq i} \oplus A \equiv\sub{T} X^k_{\neq i}$. Again by
Theorem \ref{vlthm}, $X$ is $1$-random relative to $A$. But $A
\leq\sub{T} X$, so $A$ is a base for $1$-randomness, and hence is
$K$-trivial.
\end{proof}

Weak $2$-randomness is exactly the level of randomness necessary to
obtain Corollary \ref{weak2randcor} directly from Theorem
\ref{1randandK}, because, as shown in \cite{DNWY}, if a $1$-random set
is not weakly $2$-random, then it computes a noncomputable
c.e.\ set. The corollary itself does hold of some $1$-random sets that
are not weakly $2$-random, because if it holds of $X$ then it also
holds of any $Y$ such that $\rho(Y \triangle X)=0$. (For example, let
$X$ be $2$-random and let $Y$ be obtained from $X$ by letting
$Y(2^n)=\Omega(n)$ (where $\Omega$ is Chaitin's halting probability)
for all $n$ and letting $Y(k)=X(k)$ for all other $k$. By van
Lambalgen's Theorem, $Y$ is $1$-random, but it computes $\Omega$, and
hence is not weakly $2$-random.)

Nevertheless, Corollary \ref{weak2randcor} does not hold of all
$1$-random sets, as we now show.

\begin{defn}
Let $W_0,W_1,\ldots$ be an effective listing of the c.e.\ sets. A set
$A$ is \emph{promptly simple} if it is c.e.\ and coinfinite, and there
exist  a computable function $f$ and a computable enumeration
$A[0],A[1],\ldots$ of $A$ such that for each $e$, if $W_e$ is infinite
then there are $n$ and $s$ for which $n \in W_e[s] \setminus W_e[s-1]$
and $n \in A[f(s)]$. Note that every promptly simple set is
noncomputable.
\end{defn}

We will show that if $X \leq\sub{T} \emptyset'$ is $1$-random then
$X^{\mathfrak c}$ contains a promptly simple set, and there is a
promptly simple set $A$ such that $ \mathcal{E}(A) \leq\sub{nc} X$. (We do not
know whether we can improve the last statement to $ \mathcal{E}(A) \leq\sub{uc}
X$.) In fact, we will obtain a considerably stronger result by first
proving a generalization of the fact, due to Hirschfeldt and Miller
(see \cite[Theorem 7.2.11]{DH}), that if $\mathcal T$ is a $\Sigma^0_3$
class of measure $0$, then there is a noncomputable c.e.\ set that is
computable from each $1$-random element of $\mathcal T$.

For a binary relation $P(Y,Z)$ between elements of $2^\omega$, let
$P(Y)=\{Z : P(Y,Z)\}$.

\begin{thm}
\label{sigma3thm}
Let $\mathcal S_0,\mathcal S_1,\ldots$ be uniformly $\Pi^0_2$ classes
of measure $0$, and let $P_0(Y,Z),P_1(Y,Z),\ldots$ be uniformly
$\Pi^0_1$ relations. Let $\mathcal D$ be the class of all $Y$ for
which there are numbers $k,m$ and a $1$-random set $Z$ such that $Z
\in P_k(Y) \subseteq \mathcal S_m$. Then there is a promptly simple
set $A$ such that $A \leq\sub{T} Y$ for every $Y \in \mathcal D$.
\end{thm}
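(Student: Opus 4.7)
The plan is to adapt the construction of Hirschfeldt and Miller from \cite[Theorem~7.2.11]{DH}, which covers the special case $P_k(Y,Z) \equiv (Y=Z)$ with $\bigcup_m \mathcal S_m$ the $\Sigma^0_3$ null class in question. We build a promptly simple c.e.\ set $A$ by a computable finite-injury priority construction, and for each pair $(k,m)$ we design a Turing functional $\Gamma_{k,m}$ that, modulo a finite patch depending on $Y$, computes $A$ from any $Y \in \mathcal D_{k,m} := \{Y : \exists Z\ \text{$1$-random}\ [Z \in P_k(Y) \subseteq \mathcal S_m]\}$. Since $\mathcal D = \bigcup_{k,m} \mathcal D_{k,m}$ and $A \leq\sub{T} Y$ is a nonuniform reduction, this yields $A \leq\sub{T} Y$ for every $Y \in \mathcal D$.

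For each $m$ write $\mathcal S_m = \bigcap_j V_{m,j}$ with the $V_{m,j}$ uniformly $\Sigma^0_1$ and, after replacing $V_{m,j}$ by $\bigcap_{j' \le j} V_{m,j'}$, nested in $j$, so $\mu(V_{m,j}) \searrow 0$. Two facts drive the argument for $Y \in \mathcal D_{k,m}$. First, $P_k(Y)$ is a closed $\Pi^0_1(Y)$-class contained in every $V_{m,j}$, so compactness of $2^\omega$ yields the total $Y$-computable function $g^Y(j) := \min\{s : P_k(Y) \subseteq V_{m,j}[s]\}$. Second, the $1$-random witness $Z$ lies in $V_{m,j}[s]$ for every $s \ge g^Y(j)$ and escapes every Martin-L\"of test.

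The prompt-simplicity strategy for $N_e$ picks a fresh follower $n_e$ and enumerates it into $A$ at the first stage $s_e$ at which $n_e$ appears in $W_e$. At stage $s_e$ we compute, for each active pair $(k,m)$, an index $j_{n_e}^{(k,m)} := \min\{j : \mu(V_{m,j}[s_e]) < 2^{-e-\langle k,m\rangle}\}$ (well-defined and computable at stage $s_e$, since $\mu(V_{m,j}) \to 0$ and the measures of the clopen stagewise approximations are rational and computable), and we designate the clopen receipt $R_{n_e}^{(k,m)} := V_{m, j_{n_e}^{(k,m)}}[s_e]$. The receipts form a Solovay test of total measure $\sum_e 2^{-e-\langle k,m\rangle} < \infty$, so $Z$ lies in $R_{n_e}^{(k,m)}$ for only finitely many $e$. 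For all other $e$, the fact that $Z \notin V_{m,j_{n_e}^{(k,m)}}[s_e]$ together with $Z \in V_{m,j_{n_e}^{(k,m)}}[g^Y(j_{n_e}^{(k,m)})]$ forces $s_e < g^Y(j_{n_e}^{(k,m)})$, so $n_e \in A[g^Y(j_{n_e}^{(k,m)})]$. Setting $\Gamma_{k,m}^Y(n_e) := A[g^Y(j_{n_e}^{(k,m)})](n_e)$ (and $\Gamma_{k,m}^Y(n) := 0$ for non-followers) therefore agrees with $A$ on all but finitely many arguments, and the finite exceptions are patched nonuniformly.

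The main obstacle will be coordinating prompt simplicity with the measure-based cost bounds across infinitely many pairs $(k,m)$ simultaneously: we cannot afford to delay the enumeration of $n_e$ long enough to wait for $\mu(V_{m,j}[s])$ to shrink for every $(k,m)$, yet all the tags $j_{n_e}^{(k,m)}$ must be ready the moment $n_e$ is enumerated. The standard resolution is a finite-injury priority argument in which only finitely many pairs $(k,m)$ are active at any given stage, combined with the geometric weighting above so that the Solovay totals converge regardless of the order in which followers are enumerated.
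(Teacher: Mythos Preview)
Your overall plan---Hirschfeldt--Miller with Solovay receipts, plus a $Y$-computable stage bound coming from compactness of $P_k(Y)$---is the paper's approach, and the core verification (that $Z\notin R^{(k,m)}_{n_e}$ forces $s_e$ below the $Y$-computable bound) is correct. But the prompt-simplicity mechanism you describe does not work. You have $N_e$ ``pick a fresh follower $n_e$ and enumerate it into $A$ at the first stage $s_e$ at which $n_e$ appears in $W_e$.'' A follower chosen in advance need never appear in $W_e$ at all, even when $W_e$ is infinite, so nothing forces $N_e$ to be met; the resulting $A$ need not be simple, let alone promptly simple. Prompt simplicity demands that \emph{some} element freshly appearing in $W_e$ be promptly placed in $A$; you do not get to nominate which one in advance. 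There is a related defect on the reduction side: $\Gamma^Y_{k,m}(n_e)$ uses the tag $j_{n_e}^{(k,m)}$ created at stage $s_e$, but if $n_e\notin W_e$ then $s_e$ and the tag never exist, so $\Gamma^Y_{k,m}$ diverges on such followers even though $A(n_e)=0$---and deciding whether $n_e\in W_e$ is $\Pi^0_1$, so you cannot patch this with a case split.

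The paper avoids both problems by dispensing with followers and with the auxiliary tags altogether: the enumerated number $n$ itself serves as the index into the $\mathcal V^m_{\cdot}$ sequence. At stage $s$, a number $n>2e$ freshly in $W_e$ goes into $A$ provided $\mu(\mathcal V^m_n[s])\le 2^{-e}$ for all $m<e$; since $\mu(\mathcal V^m_n)\to 0$ as $n\to\infty$, every sufficiently large fresh element of $W_e$ is automatically eligible, and prompt simplicity holds with $f(s)=s$. On the reduction side one sets $B(n):=A[s_n](n)$ where $s_n$ is the least $s$ at which the stage-$s$ approximation to $P_k(Y)$ is contained in $\mathcal V^m_n[s]$; this is total in $n$ regardless of whether $n$ ever enters $A$, and the Solovay receipt attached to $n$ is simply $\mathcal V^m_n[s]$ at its enumeration stage. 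If you replace your follower mechanism by this one, the remainder of your argument (including your compactness computation of $g^Y$) goes through essentially unchanged.
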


\begin{proof}
Let $(\mathcal V^m_n)_{m,n \in \omega}$ be uniformly $\Sigma^0_1$
classes such that $\mathcal S_m = \bigcap_n \mathcal V^m_n$. We may
assume that $\mathcal V^m_0 \supseteq \mathcal V^m_1 \supseteq \cdots$
for all $m$. For each $m$, we have $\mu(\bigcap_n \mathcal V^n_m) =
\mu(\mathcal S_m)=0$, so $\lim_n \mu(\mathcal V^m_n)=0$ for each
$m$. Let $\Theta$ be a computable relation such that $P_k(Y,Z) \equiv
\forall l\, \Theta(k,Y \uhr l,Z \uhr l)$.

Define $A$ as follows. At each stage $s$, if there is an $e<s$ such
that no numbers have entered $A$ for the sake of $e$ yet, and an
$n>2e$ such that $n \in W_e[s] \setminus W_e[s-1]$ and $\mu(\mathcal
V^m_n[s]) \leq 2^{-e}$ for all $m<e$, then for the least such $e$, put
the least corresponding $n$ into $A$. We say that $n$ enters $A$ for
the sake of $e$.

Clearly, $A$ is c.e.\ and coinfinite, since at most $e$ many numbers
less than $2e$ ever enter $A$. Suppose that $W_e$ is infinite. Let
$t>e$ be a stage such that all numbers that will ever enter $A$ for
the sake of any $i<e$ are in $A[t]$. There must be an $s \geq t$ and
an $n>2e$ such that $n \in W_e[s] \setminus W_e[s-1]$ and
$\mu(\mathcal V^m_n[s]) \leq 2^{-e}$ for all $m<e$. Then the least
such $n$ enters $A$ for the sake of $e$ at stage $s$ unless another
number has already entered $A$ for the sake of $e$. It follows that
$A$ is promptly simple.

Now suppose that $Y \in \mathcal D$. Let the numbers $k,m$ and the
$1$-random set $Z$ be such that $Z \in P_k(Y) \subseteq \mathcal S_m$.
Let $B \leq\sub{T} Y$ be defined as follows. Given $n$, let $$\mathcal
D^n_s = \{X : (\forall l \leq s)\, \Theta(k,Y \uhr l,X \uhr l)\}
\setminus \mathcal V^m_n[s].$$ Then $\mathcal D^n_0 \supseteq \mathcal
D^n_1 \supseteq \cdots$. Furthermore, if $X \in \bigcap_s \mathcal
D^n_s$ then $P_k(Y,X)$ and $X \notin \mathcal V^m_n$. Since $P_k(Y)
\subseteq S_m \subseteq \mathcal V^m_n$, it follows that $X \notin
P_k(Y)$, which is a contradiction. Thus $\bigcap_s \mathcal D^n_s =
\emptyset$. Since the $\mathcal D^n_s$ are nested closed sets, it
follows that there is an $s$ such that $\mathcal D^n_s =
\emptyset$. Let $s_n$ be the least such $s$ (which we can find using
$Y$) and let $B(n)=A(n)[s_n]$. Note that $B \subseteq A$.

Let $T=\{\mathcal V^m_n[s] : n \textrm{ enters } A \textrm{ at stage }
s\}$. We can think of $T$ as a uniform singly-indexed sequence of
$\Sigma^0_1$ sets since $m$ is fixed and for each $n$ there is at most
one $s$ such that $\mathcal V^m_n[s] \in T$. For each $e$, there is at
most one $n$ that enters $A$ for the sake of $e$, and the sum of the
measures of the $\mathcal V^m_n[s]$ such that $n$ enters $A$ at stage
$s$ for the sake of some $e>m$ is bounded by $\sum_e 2^{-e}$, which is
finite. Thus $T$ is a Solovay test, and hence $Z$ is in only finitely
many elements of $T$. So for all but finitely many $n$, if $n$ enters
$A$ at stage $s$ then $Z \notin \mathcal V^m_n[s]$. Then $Z \in
\mathcal D^n_s$, so $s_n>s$. Hence, for all such $n$, we have that
$B(n) = A(n)[s_n] = 1$. Thus $B =^* A$, so $A \equiv\sub{T} B
\leq\sub{T} Y$.
\end{proof}

Note that the result of Hirschfeldt and Miller mentioned above follows
from this theorem by starting with a $\Sigma^0_3$ class $\mathcal
S=\bigcap_m \mathcal S_m$ of measure $0$ and letting each $P_k$ be the
identity relation.

\begin{cor}
\label{1randcor}
Let $X \leq\sub{T} \emptyset'$ be $1$-random. There is a promptly
simple set $A$ such that if $\overline{\rho}(D \triangle
X)<\frac{1}{4}$ then $A \leq\sub{T} D$. In particular, $X^{\mathfrak
c}$ contains a promptly simple set, and there is a promptly simple
set $A$ such that $ \mathcal{E}(A) \leq\sub{nc} X$.
\end{cor}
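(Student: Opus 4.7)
The plan is to apply Theorem \ref{sigma3thm} with $P_k$ and $\mathcal{S}_m$ chosen so that every $D$ with $\overline{\rho}(D \triangle X) < \tfrac{1}{4}$ lies in the class $\mathcal{D}$, with $X$ itself as the $1$-random witness. Since $X \leq\sub{T} \emptyset'$, fix a computable approximation $X[s] \to X$. For $k \geq 5$ I would take $P_k(Y, Z) \equiv \forall n > k\, \rho_n(Y \triangle Z) \leq \tfrac{1}{4} - \tfrac{1}{k}$, which is visibly uniformly $\Pi^0_1$ in $(Y, Z)$ since the Hamming-density condition is decidable from $Y \uhr n$ and $Z \uhr n$; for smaller $k$ any trivial default works. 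For $m \geq 5$ I would take $\mathcal{S}_m = \{Z : \forall n > m\, \rho_n(Z \triangle X) \leq \tfrac{1}{2} - \tfrac{2}{m}\}$.

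To see that $\mathcal{S}_m$ is uniformly $\Pi^0_2$, I would present it as $\bigcap_{(n,s)} \mathcal{V}^m_{n,s}$, where for $n > m$, $\mathcal{V}^m_{n,s} = \{Z : \exists t \geq s\, \rho_n(Z \triangle X[t]) \leq \tfrac{1}{2} - \tfrac{2}{m}\}$ is uniformly $\Sigma^0_1$ in $(m, n, s)$ (and $\mathcal{V}^m_{n,s} = 2^\omega$ for $n \leq m$). Since $X[t] \uhr n$ stabilizes to $X \uhr n$, the condition ``infinitely often $\rho_n(Z \triangle X[t]) \leq \tfrac{1}{2} - \tfrac{2}{m}$'' matches ``$\rho_n(Z \triangle X) \leq \tfrac{1}{2} - \tfrac{2}{m}$'', so indeed $\bigcap_{(n,s)} \mathcal{V}^m_{n,s} = \mathcal{S}_m$. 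Nullity of $\mathcal{S}_m$ follows from the Chernoff bound: for any fixed $n > m$, since $|Z \uhr n \triangle X \uhr n|$ is $\mathrm{Binomial}(n, \tfrac{1}{2})$ for uniform random $Z$, the set $\{Z : \rho_n(Z \triangle X) \leq \tfrac{1}{2} - \tfrac{2}{m}\}$ has measure at most $e^{-8n/m^2}$, which tends to $0$ as $n \to \infty$.

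Now given $D$ with $\overline{\rho}(D \triangle X) < \tfrac{1}{4}$, I would choose $k$ so large that $\rho_n(D \triangle X) \leq \tfrac{1}{4} - \tfrac{1}{k}$ for all $n > k$; this guarantees $X \in P_k(D)$. For any $Z \in P_k(D)$ and $n > k$, the triangle inequality for Hamming distance yields $\rho_n(Z \triangle X) \leq \rho_n(Z \triangle D) + \rho_n(D \triangle X) \leq 2(\tfrac{1}{4} - \tfrac{1}{k}) = \tfrac{1}{2} - \tfrac{2}{k}$, so $Z \in \mathcal{S}_k$. Thus $D \in \mathcal{D}$ with $1$-random witness $X$, and Theorem \ref{sigma3thm} produces a promptly simple $A$ with $A \leq\sub{T} D$ for every such $D$.

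The remaining assertions are immediate. Any coarse description $D$ of $X$ satisfies $\rho(D \triangle X) = 0 < \tfrac{1}{4}$, so $A \leq\sub{T} D$, which both places $A$ in $X^{\mathfrak c}$ and forces $\mathcal{E}(A) \leq\sub{T} A \leq\sub{T} D$; since $\mathcal{E}(A)$ is trivially a coarse description of itself, this gives $\mathcal{E}(A) \leq\sub{nc} X$. The delicate step to verify carefully will be the uniformly $\Sigma^0_1$ presentation of the $\mathcal{V}^m_{n,s}$, which relies on the $\Delta^0_2$-ness of $X$ to reduce the condition on $\rho_n(Z \triangle X)$ to computable conditions at finite stages of the approximation $X[t]$.
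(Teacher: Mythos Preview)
Your proof is correct and follows essentially the same approach as the paper: define uniformly $\Pi^0_1$ closeness relations $P_k$ and uniformly $\Pi^0_2$ null classes $\mathcal S_m$ around $X$, verify that $X \in P_k(D) \subseteq \mathcal S_m$ via the triangle inequality for Hamming distance, and invoke Theorem~\ref{sigma3thm}. The only cosmetic differences are that the paper uses the exact thresholds $\tfrac14$ and $\tfrac12$ (rather than your shifted $\tfrac14-\tfrac1k$ and $\tfrac12-\tfrac2m$) and argues nullity by observing that no set $\tfrac12$-close to $X$ can be $1$-random relative to $X$, whereas your Chernoff bound gives a slightly cleaner route to $\mu(\mathcal S_m)=0$.
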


\begin{proof}
Say that sets $Y$ and $Z$ are \emph{$r$-close from $m$ on} if whenever
$m < n$, the Hamming distance between $Y \uhr n$ and $Z \uhr n$
(i.e., the number of bits on which these two strings differ) is at
most $rn$.

Let $\mathcal S_m$ be the class of all $Z$ such that $X$ and $Z$ are
$\frac{1}{2}$-close from $m$ on. Since $X$ is $\Delta^0_2$, the
$\mathcal S_m$ are uniformly $\Pi^0_2$ classes. Furthermore, if $X$
and $Z$ are $\frac{1}{2}$-close from $m$ on for some $m$, then $Z$
cannot be $1$-random relative to $X$ (by the same argument that shows
that if $C$ is $1$-random then there must be infinitely many $n$ such
that $C \uhr n$ has more $1$'s than $0$'s), so $\mu(\mathcal S_m)=0$
for all $m$. Let $P_m(Y,Z)$ hold if and only if $Y$ and $Z$ are
$\frac{1}{4}$-close from $m$ on. The $P_m$ are clearly uniformly
$\Pi^0_1$ relations.

Thus the hypotheses of Theorem \ref{sigma3thm} are satisfied. Let $A$
be as in that theorem. Suppose that $\overline{\rho}(D \triangle
X)<\frac{1}{4}$. Then there is an $m$ such that $D$ and $X$ are
$\frac{1}{4}$-close from $m$ on. If $D$ and $Z$ are
$\frac{1}{4}$-close from $m$ on, then by the triangle inequality for
Hamming distance, $X$ and $Z$ are $\frac{1}{2}$-close from $m$
on. Thus $X \in P_m(D) \subseteq \mathcal S_m$, so $A \leq\sub{T} D$.
\end{proof}

After learning about Corollary \ref{1randcor}, Nies \cite{N2} gave a
different but closely connected proof of this result, which works even
for $X$ of positive effective Hausdorff dimension, as long as we
sufficiently decrease the bound $\frac{1}{4}$. However, even for $X$
of effective Hausdorff dimension $1$ his bound is much worse, namely
$\frac{1}{20}$.

Maass, Shore, and Stob \cite[Corollary 1.6]{MSS} showed that if $A$
and $B$ are promptly simple then there is a promptly simple set $G$ such
that $G \leq\sub{T} A$ and $G \leq\sub{T} B$. Thus we have the
following extension of Ku{\v c}era's result \cite{K} that two
$\Delta^0_2$ $1$-random sets cannot form a minimal pair, which will also
be useful below.

\begin{cor}
\label{quartercor}
Let $X_0,X_1 \leq\sub{T} \emptyset'$ be $1$-random. There is a
promptly simple set $A$ such that if $\overline{\rho}(D \triangle
X_i)<\frac{1}{4}$ for some $i \in \{0,1\}$ then $A \leq\sub{T} D$.
\end{cor}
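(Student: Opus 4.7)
The plan is to apply Corollary \ref{1randcor} twice and then combine the resulting promptly simple sets using the Maass--Shore--Stob infimum theorem cited immediately before the corollary.

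First, since $X_0 \leq\sub{T} \emptyset'$ is $1$-random, I would invoke Corollary \ref{1randcor} to obtain a promptly simple set $A_0$ with the property that $A_0 \leq\sub{T} D$ whenever $\overline{\rho}(D \triangle X_0) < \frac{1}{4}$. Applying the corollary again to $X_1$ yields a promptly simple set $A_1$ with the analogous property for $X_1$.

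Next, I would apply the result of Maass, Shore, and Stob \cite[Corollary 1.6]{MSS} quoted in the paragraph preceding the statement: since both $A_0$ and $A_1$ are promptly simple, there exists a promptly simple set $A$ such that $A \leq\sub{T} A_0$ and $A \leq\sub{T} A_1$. This is the set we want. Indeed, if $\overline{\rho}(D \triangle X_i) < \frac{1}{4}$ for some $i \in \{0,1\}$, then by the choice of $A_i$ we have $A_i \leq\sub{T} D$, and hence $A \leq\sub{T} A_i \leq\sub{T} D$.

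There is really no obstacle here; the work has all been done in Corollary \ref{1randcor} and in \cite{MSS}. The only thing to verify is that the definition of promptly simple is the same as that used by Maass, Shore, and Stob, so that their infimum theorem applies to the sets produced by Corollary \ref{1randcor}, which is routine. So the proof is essentially a two-line combination of the two cited results.
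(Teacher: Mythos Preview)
Your proposal is correct and matches the paper's intended argument exactly: the paper does not give a separate proof but derives the corollary by the word ``Thus'' immediately after quoting the Maass--Shore--Stob result, i.e., apply Corollary~\ref{1randcor} to each $X_i$ and then take a promptly simple set below both. The paper also notes afterward that one could alternatively adapt the proof of Corollary~\ref{1randcor} directly, but your route is the one actually used.
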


It is easy to adapt the proof of Corollary \ref{1randcor} to give a
direct proof of Corollary \ref{quartercor}, and indeed of the fact
that for any uniformly $\emptyset'$-computable family $X_0,X_1,\ldots$
of $1$-random sets, there is a promptly simple set $A$ such that if
$\overline{\rho}(D \triangle X_i)<\frac{1}{4}$ for some $i$ then $A
\leq\sub{T} D$. (We let $\mathcal S_{\langle i,m \rangle}$ be the
class of all $Z$ such that $X_i$ and $Z$ are $\frac{1}{2}$-close from
$m$ on, and the rest of the proof is essentially as before.)

Given the many (and often surprising) characterizations of
$K$-triviality, it is natural to ask whether there is a converse to
Theorem \ref{1randandK} stating that if $A$ is $K$-trivial then $A \in
X^{\mathfrak c}$ for some $1$-random $X$. We now show that is not the
case, using a recent result of Bienvenu, Greenberg, Ku{\v c}era, Nies,
and Turetsky \cite{BGKNT}. There are many notions of randomness tests
in the theory of algorithmic randomness. Some, like Martin-L\"of
tests, correspond to significant levels of algorithmic randomness,
while other, less obviously natural ones have nevertheless become
important tools in the development of this theory. Balanced tests
belong to the latter class.

\begin{defn}
Let $\mathcal W_0,\mathcal W_1,\ldots \subseteq 2^\omega$ be an
effective list of all $\Sigma^0_1$ classes. A \emph{balanced test} is
a sequence $(\mathcal U_n)_{n \in \omega}$ of $\Sigma^0_1$ classes such
that there is a computable binary function $f$ with the following
properties.
\begin{enumerate}[\rm 1.]

\item $|\{s : f(n,s+1) \neq f(n,s)\}| \leq O(2^n)$,

\item $\forall n\; \mathcal U_n = \mathcal W_{\lim_s f(n,s)}$, and

\item $\forall n\; \forall s\; \mu(\mathcal W_{f(n,s)}) \leq 2^{-n}$.

\end{enumerate}
\end{defn}

For $\sigma \in 2^{<\omega}$ and $X \in 2^\omega$, we write $\sigma X$
for the element of $2^\omega$ obtained by concatenating $\sigma$ and
$X$.

\begin{thm}[Bienvenu, Greenberg, Ku{\v c}era, Nies, and Turetsky
\cite{BGKNT}]
\label{oberthm}
There are a $K$-trivial set $A$ and a balanced test $(\mathcal U_n)_{n
\in \omega}$ such that if $A \leq\sub{T} X$ then there is a string
$\sigma$ with $\sigma X \in \bigcap_n \mathcal U_n$.
\end{thm}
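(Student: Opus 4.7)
My plan is to build $A$ as a c.e.\ K-trivial set and the balanced test $(\mathcal U_n)$ simultaneously via a cost-function construction in the style of Nies \cite{N}, refined so that the $O(2^n)$ change budget at level $n$ in the balanced test matches the budget consumed by the approximation of $A$ below certain thresholds.

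I would fix an effective listing $(\Phi_e)$ of Turing functionals and a computable approximation $(A_s)$ of $A$. At each stage $s$ and each $n$, I would maintain an index $f(n,s)$ whose $\Sigma^0_1$ class is
\[
\mathcal W_{f(n,s)} = \bigcup_{e \leq n,\, |\sigma| \leq n} \{Z : \Phi_e^{\sigma Z} \supseteq A_s \uhr \ell_{n,e}\},
\]
with $\ell_{n,e}$ chosen large enough (and, if needed, after Solovay-style trimming) that $\mu(\mathcal W_{f(n,s)}) \leq 2^{-n}$. The index $f(n,\cdot)$ changes only when some $A_s \uhr \ell_{n,e}$ for $e \leq n$ actually changes.

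I would then build $A$ using a cost function $\mathbf{c}(x,s)$ which charges a change of $A$ at position $x$ at stage $s$ the amount $\sum_n 2^{-n}$ summed over those $n$ for which the change crosses threshold $\ell_{n,e}$ for some $e \leq n$. A refinement of the cost-function theorem would yield a noncomputable c.e.\ $A$ obeying $\mathbf{c}$ together with the standard K-triviality cost function. The total cost at level $n$ is then $O(1)$, giving the desired $O(2^n)$ change budget. For the capture property: if $A \leq\sub{T} X$ via $\Phi_e$, choose a shift $\sigma$ absorbing any finite setup required by the reduction; then for every $n$ larger than both $e$ and $|\sigma|$, the pair $(e,\sigma)$ appears in the union defining $\mathcal W_{f(n,s)}$ and $\Phi_e^{\sigma X}$ extends $A \uhr \ell_{n,e}$, placing $\sigma X \in \mathcal U_n$.

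The main obstacle is the combinatorial balancing: arranging $\mathbf{c}$ so that simultaneously the measure bound $\mu(\mathcal W_{f(n,s)}) \leq 2^{-n}$, the $O(2^n)$ change budget, K-triviality, and noncomputability of $A$ all hold. This requires the balanced cost-function machinery of \cite{BGKNT}, including carefully bounding the measure contribution of each functional $\Phi_e$ to $\mathcal W_{f(n,s)}$, which is the technical heart of the argument and is what forces $A$ to be K-trivial rather than merely low or $\Delta^0_2$.
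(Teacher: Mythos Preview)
The paper does not prove this theorem at all: it is quoted verbatim from Bienvenu, Greenberg, Ku{\v c}era, Nies, and Turetsky \cite{BGKNT} and used as a black box in the argument that follows (Lemma~\ref{balancedlem} and the subsequent theorem). There is therefore no proof in the paper to compare your proposal against.

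As for the proposal itself: you have the right architecture in mind---a c.e.\ set built by a cost-function construction, with the test components tracking initial segments of the current approximation through all short functionals and shifts---and this is indeed the spirit of \cite{BGKNT}. But your sketch essentially concedes the substance of the argument in its last paragraph: you say the combinatorial balancing ``requires the balanced cost-function machinery of \cite{BGKNT},'' which is precisely the theorem you are trying to prove. Two specific points would need real work. First, your definition of $\mathcal W_{f(n,s)}$ as a union over $e\le n$ and $|\sigma|\le n$ does not obviously have measure at most $2^{-n}$ after ``Solovay-style trimming'' while still capturing every $X\ge\sub{T} A$; the tension between the measure bound and the capture property is exactly where the difficulty lies, and trimming that discards mass may discard the very cylinder containing $\sigma X$. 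Second, the role of the shift $\sigma$ is not merely to ``absorb finite setup'': in the \cite{BGKNT} framework it is tied to the fact that one only gets capture up to a finite tail (the test is an \emph{Oberwolfach}-type test), and your account does not explain why a single $\sigma$ works uniformly for all $n$. So the outline is plausible, but what you have written is a description of the shape of the \cite{BGKNT} proof rather than an independent argument.
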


We will also use the following measure-theoretic fact.

\begin{thm}[Loomis and Whitney \cite{LW}]
\label{lwthm}
Let $\mathcal S \subseteq 2^\omega$ be open, and let $k \in
\omega$. For $i<k$, let $\pi_i(\mathcal S)=\{Y^k_{\neq i} : Y \in
\mathcal S\}$. Then $\mu(\mathcal S)^{k-1} \leq \mu(\pi_0(\mathcal
S))\cdots\mu(\pi_{k-1}(\mathcal S))$.
\end{thm}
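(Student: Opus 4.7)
The plan is to reduce this inequality on Cantor space to the classical discrete Loomis--Whitney inequality: if $T \subseteq X_0 \times \cdots \times X_{k-1}$ is a finite set and $\widehat\pi_i(T) \subseteq \prod_{j \neq i} X_j$ denotes the coordinate projection that forgets the $i$-th factor, then $|T|^{k-1} \leq \prod_{i<k} |\widehat\pi_i(T)|$. Since $\mathcal S$ is open, I can write it as an increasing union $\mathcal S = \bigcup_n \open{T_n}$ of clopen sets. Each $\pi_i$ commutes with unions, and $\mu$ is continuous from below, so it suffices to establish the inequality when $\mathcal S$ is clopen; the general case then follows by passing to the monotone limit on both sides.

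For a clopen $\mathcal S = \open T$, I would first arrange that $T \subseteq 2^{Nk}$ for some $N$: any clopen set is determined by a finite antichain of strings, and each such string can be refined to a union of strings of a common length that is a multiple of $k$. Grouping the $Nk$ bits of a string into $k$ blocks according to residue mod $k$ gives a natural bijection $2^{Nk} \cong (2^N)^k$, under which the map $Y \mapsto Y^k_{\neq i}$ restricts at level $Nk$ to the standard coordinate projection $\widehat\pi_i : (2^N)^k \to (2^N)^{k-1}$ that forgets the $i$-th block. In particular $\pi_i(\open T) = \open{\widehat\pi_i(T)}$, so $\mu(\open T) = |T|/2^{Nk}$ and $\mu(\pi_i(\open T)) = |\widehat\pi_i(T)|/2^{N(k-1)}$. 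Substituting these identities into the discrete Loomis--Whitney inequality applied to $T$ and dividing through by $2^{Nk(k-1)}$ gives exactly $\mu(\open T)^{k-1} \leq \prod_{i<k} \mu(\pi_i(\open T))$.

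The main (and essentially only) obstacle is the discrete Loomis--Whitney inequality itself, which is a classical combinatorial theorem provable via Shearer's entropy inequality or by induction using Cauchy--Schwarz; the attribution to Loomis and Whitney in the paper indicates that the authors intend to invoke this discrete fact rather than reprove it. Everything else in the argument is routine bookkeeping: converting measures into normalized counts, matching the two flavors of projection via the block bijection, and passing from clopen approximations back to $\mathcal S$ by monotone convergence of measure.
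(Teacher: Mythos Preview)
The paper does not prove this theorem; it is stated with attribution to Loomis and Whitney \cite{LW} and used as a black box in the proof of Lemma~\ref{balancedlem}. Your reduction is correct and is exactly the natural way to derive this Cantor-space formulation from the classical discrete Loomis--Whitney inequality: the identification of $\pi_i$ on cylinders $\open{T}$ (with $T \subseteq 2^{Nk}$) with the coordinate projection $\widehat\pi_i$ on $(2^N)^k$ is accurate, the measure computations $\mu(\open T)=|T|2^{-Nk}$ and $\mu(\pi_i(\open T))=|\widehat\pi_i(T)|2^{-N(k-1)}$ are right, and the passage to general open $\mathcal S$ by writing it as an increasing union of clopen sets and using continuity of $\mu$ from below is unproblematic since $\pi_i$ commutes with unions and sends clopen sets to clopen sets. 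So there is nothing to compare against in the paper itself; you have simply supplied the (standard) justification the authors chose to omit.
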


Our result will follow from the following lemma.

\begin{lem}
\label{balancedlem}
Let $X$ be $1$-random, let $k>1$, and let $(\mathcal U_n)_{n \in
  \omega}$ be a balanced test. There is an $i<k$ such that $X^k_{\neq
  i} \notin \bigcap_n \mathcal U_n$.
\end{lem}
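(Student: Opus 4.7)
\textbf{Proof plan for Lemma \ref{balancedlem}.} I would argue by contradiction, assuming that $X^k_{\neq i} \in \bigcap_n \mathcal U_n$ for every $i<k$, and building from the balanced test $(\mathcal U_n)$ a Martin-L\"of test that captures $X$, contradicting its $1$-randomness. The core idea is that while each ``side'' $X^k_{\neq i}$ lives in a set of measure $\le 2^{-n}$, the Loomis--Whitney inequality forces the full $X$ to live in a set of much smaller measure, small enough to beat the $O(2^n)$ index changes allowed by balancedness.

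Let $f$ be the computable function witnessing that $(\mathcal U_n)$ is balanced, with the corresponding constant $c$ so that $|\{s:f(n,s+1)\neq f(n,s)\}|\leq c\cdot 2^n$. For each $n$ and $s$ define
\[
  \mathcal S_n[s] \;=\; \{Y\in 2^\omega : Y^k_{\neq i} \in \mathcal W_{f(n,s)}\text{ for every }i<k\}.
\]
Because $Y\mapsto Y^k_{\neq i}$ is continuous and $\mathcal W_{f(n,s)}$ is $\Sigma^0_1$ uniformly in $n,s$, the classes $\mathcal S_n[s]$ are uniformly $\Sigma^0_1$. The crucial measure bound comes from Theorem \ref{lwthm}: since $\pi_i(\mathcal S_n[s])\subseteq \mathcal W_{f(n,s)}$ for each $i<k$, and $\mu(\mathcal W_{f(n,s)})\le 2^{-n}$, we get
\[
  \mu(\mathcal S_n[s])^{k-1} \le \prod_{i<k}\mu(\pi_i(\mathcal S_n[s])) \le 2^{-nk},
\]
so $\mu(\mathcal S_n[s])\le 2^{-nk/(k-1)}$.

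Next I would set $\mathcal V_n=\bigcup_s \mathcal S_n[s]$, again uniformly $\Sigma^0_1$. Because $\mathcal S_n[s]$ depends on $s$ only through $f(n,s)$, and $f(n,\cdot)$ takes at most $c\cdot 2^n + 1$ distinct values, a union bound gives
\[
  \mu(\mathcal V_n) \;\le\; (c\cdot 2^n + 1)\cdot 2^{-nk/(k-1)} \;\le\; (c+1)\cdot 2^{-n/(k-1)}.
\]
Choosing a constant $m_0$ with $(c+1)\cdot 2^{-m_0}\le 1$ and setting $\mathcal T_m = \mathcal V_{(k-1)(m+m_0)}$ yields a genuine Martin-L\"of test.

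Finally I would verify $X \in \bigcap_m \mathcal T_m$. Under the contradiction hypothesis, for each $n$ and each $i<k$ there is a stage by which $X^k_{\neq i}$ is enumerated into $\mathcal W_{\lim_s f(n,s)}$; taking $s$ large enough so that $f(n,s)$ has stabilized \emph{and} all $k$ enumerations have occurred puts $X$ into $\mathcal S_n[s]\subseteq \mathcal V_n$. Hence $X\in\bigcap_n \mathcal V_n\subseteq\bigcap_m \mathcal T_m$, contradicting the $1$-randomness of $X$.

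The main obstacle I anticipate is the interaction between Loomis--Whitney (which needs $\Sigma^0_1$/open sets at each fixed stage) and the limit nature of $\mathcal U_n$: one cannot naively take $\mathcal S_n=\{Y:\forall i\,Y^k_{\neq i}\in\mathcal U_n\}$ because $\mathcal U_n$ is only $\Sigma^0_2$-presented. The trick is to apply Loomis--Whitney stage-by-stage at the $\Sigma^0_1$ snapshots $\mathcal W_{f(n,s)}$ and then absorb the cost of unioning over $s$ using the $O(2^n)$ bound on index changes. The slight measure slack $2^{-n/(k-1)}$ provided by Loomis--Whitney is precisely what makes this affordable, and a constant shift of indices converts $(\mathcal V_n)$ into a Martin-L\"of test.
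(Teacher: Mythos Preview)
Your argument is correct and essentially identical to the paper's: both apply Loomis--Whitney to the stagewise snapshots $\{Y:\forall i<k\;Y^k_{\neq i}\in\mathcal U_n[s]\}$, then use the $O(2^n)$ bound on index changes to control the measure of the union over $s$. The only cosmetic difference is that the paper observes $\sum_n \mu(\mathcal S_n)<\infty$ and invokes a Solovay test directly, whereas you thin to a subsequence to obtain a Martin-L\"of test.
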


\begin{proof}
Assume for a contradiction that $X^k_{\neq i} \in \bigcap_n \mathcal
U_n$ for all $i<k$. Let
$$
\mathcal S_{n,s} = \{Y : \forall i<k\;
(Y^k_{\neq i} \in \mathcal U_n[s])\}
$$
and let $\mathcal S_n=\bigcup_s \mathcal S_{n,s}$. By Theorem
\ref{lwthm}, $\mu(\mathcal S_{n,s})^{k-1} \leq \mu(\mathcal
U_n[s])^k$, so $\mu(\mathcal S_n) \leq O(2^n)2^{-\frac{nk}{k-1}} =
O(2^{-\frac{n}{k-1}})$, and hence $\sum_n \mu(\mathcal
S_n)<\infty$. Thus $\{\mathcal S_n : n \in \omega\}$ is a Solovay
test. However, $X \in \bigcap_n \mathcal S_n$, so we have a
contradiction.
\end{proof}

\begin{thm}
There is a $K$-trivial set $A$ such that $A \notin X^{\mathfrak c}$
for all $1$-random $X$.
\end{thm}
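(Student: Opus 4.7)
The plan is to take $A$ and $(\mathcal U_n)_{n \in \omega}$ as in Theorem \ref{oberthm}, fix an arbitrary $1$-random $X$, and suppose for a contradiction that $A \in X^{\mathfrak{c}}$. By the contrapositive of Theorem \ref{compthm}, there is an $\epsilon > 0$ such that $A \leq\sub{T} D$ whenever $\overline{\rho}(X \triangle D) \leq \epsilon$. Fix $k \in \omega$ with $k > 1/\epsilon$. As in the proof of Theorem \ref{1randandK}, for each $i < k$ the set $X \setminus \{n : n \equiv i \bmod k\}$ lies within coarse distance $1/k < \epsilon$ of $X$ and is Turing equivalent to $X^k_{\neq i}$, so $A \leq\sub{T} X^k_{\neq i}$ for every $i < k$. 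Theorem \ref{oberthm} then yields, for each $i < k$, a string $\sigma_i$ with $\sigma_i X^k_{\neq i} \in \bigcap_n \mathcal U_n$.

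The key step is a variant of Lemma \ref{balancedlem} that absorbs this extra existential prefix. I will define
$$
\mathcal S_{n,s} = \{Y : \forall i < k\; \exists \sigma\, (|\sigma| \leq n \;\wedge\; \sigma Y^k_{\neq i} \in \mathcal U_{(k+1)n}[s])\}
$$
and set $\mathcal S_n = \bigcup_s \mathcal S_{n,s}$, obtaining a uniformly $\Sigma^0_1$ sequence. Each projection $\pi_i(\mathcal S_{n,s})$ sits inside $\bigcup_{|\sigma| \leq n} \sigma^{-1}(\mathcal U_{(k+1)n}[s])$, whose measure is at most $\sum_{\ell \leq n} 2^{\ell}\, \mu(\mathcal U_{(k+1)n}[s]) \leq 2^{n+1} \cdot 2^{-(k+1)n} = 2^{1-kn}$. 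Loomis--Whitney (Theorem \ref{lwthm}) then gives $\mu(\mathcal S_{n,s}) \leq 2^{k(1-kn)/(k-1)}$. Because $(\mathcal U_n)$ is a balanced test, $\mathcal U_{(k+1)n}[s]$ has $O(2^{(k+1)n})$ index changes, so
$$
\mu(\mathcal S_n) \leq O(2^{(k+1)n}) \cdot 2^{k(1-kn)/(k-1)} = O(2^{(k-n)/(k-1)}),
$$
which is summable in $n$, making $(\mathcal S_n)_{n \in \omega}$ a Solovay test.

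To close the argument, I will verify that $X \in \mathcal S_n$ for every $n \geq \max_{i<k} |\sigma_i|$: at such an $n$ the string $\sigma_i$ witnesses the inner existential, since $\sigma_i X^k_{\neq i} \in \mathcal U_{(k+1)n}$ forces $\sigma_i X^k_{\neq i} \in \mathcal U_{(k+1)n}[s]$ for all sufficiently large $s$, and choosing $s$ past every such threshold simultaneously (one per $i < k$) shows $X \in \mathcal S_{n,s} \subseteq \mathcal S_n$. Hence $X$ lies in cofinitely many $\mathcal S_n$, contradicting Solovay's characterization of $1$-randomness.

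The main obstacle is precisely the existential prefix in Theorem \ref{oberthm}: summing $\mu(\sigma^{-1}\mathcal U_m)$ over $|\sigma| \leq n$ can inflate the projection measure by a factor of up to $2^{n+1}$ and threatens to swamp both the Loomis--Whitney savings and the $O(2^{(k+1)n})$ balanced-test fluctuations. The quantitative trick is to evaluate the test at index $(k+1)n$ rather than at $n$; with this choice the competing exponents cancel down to $(k-n)/(k-1)$, and the argument succeeds for any $k > 1/\epsilon$.
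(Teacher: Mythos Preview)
Your proof is correct and follows essentially the same strategy as the paper: invoke Theorem \ref{oberthm}, use Theorem \ref{compthm} to obtain $k$ with $A \leq\sub{T} X^k_{\neq i}$ for all $i<k$, and then derive a Solovay test via the Loomis--Whitney inequality and the balanced-test change bound. The one packaging difference is that the paper absorbs the prefixes by fixing the specific strings $\sigma_0,\ldots,\sigma_{k-1}$ and forming a \emph{new balanced test} $\mathcal V_n=\{Y:\exists i<k\;(\sigma_iY\in\mathcal U_{n+k+m})\}$ (with $m=\max_i|\sigma_i|$), after which Lemma \ref{balancedlem} applies verbatim; you instead quantify over all prefixes of length $\leq n$ inside the Solovay-test components and redo the Lemma \ref{balancedlem} computation with the shifted index $(k+1)n$. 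Your route is slightly more self-contained (no separate lemma needed) and your $\mathcal S_n$ do not depend on the particular $\sigma_i$, while the paper's route avoids recomputing the exponent by reducing to the already-proved lemma; the underlying estimate is the same in both cases.
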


\begin{proof}
Let $A$ and $(\mathcal U_n)_{n \in \omega}$ be as in Theorem
\ref{oberthm}. Let $X$ be $1$-random. By Theorem \ref{compthm}, it is
enough to fix $k>1$ and show that there is an $i<k$ such that $A
\nleq\sub{T} X^k_{\neq i}$. Assume for a contradiction that $A
\leq\sub{T} X^k_{\neq i}$ for all $i<k$. Then there are
$\sigma_0,\ldots,\sigma_{k-1}$ such that $\sigma_iX^k_{\neq i} \in
\bigcap_n \mathcal U_n$ for all $i<k$. Let $m=\max_{i<k} |\sigma_i|$
and let $\mathcal V_n=\{Y : \exists i<k\; (\sigma_iY \in \mathcal
U_{n+k+m})\}$. It is easy to check that $(\mathcal V_n)_{n \in \omega}$
is a balanced test, and $X^k_{\neq i} \in \bigcap_n \mathcal V_n$ for
all $i<k$, which contradicts Lemma \ref{balancedlem}.
\end{proof}

\section{Further applications of cone-avoiding compactness} 
\label{further}

We can use Theorem \ref{compthm} to give an analog to Corollary
\ref{weak2randcor} for effective genericity. In this case,
$1$-genericity is sufficient, as it is straightforward to show that if
$X$ is $1$-generic relative to $A$ and $A$ is noncomputable, then $A
\nleq\sub{T} X$ (i.e., unlike the case for $1$-randomness, there are
no noncomputable bases for $1$-genericity), and that no $1$-generic
set can be coarsely computable. The other ingredient we need to
replicate the argument we gave in the case of effective randomness is
a version of van Lambalgen's Theorem for $1$-genericity. This result
was established by Yu \cite[Proposition 2.2]{Y}. Relativizing his
theorem and applying induction as in the case of Theorem \ref{vlthm},
we obtain the following fact.

\begin{thm}[Yu \cite{Y}]
\label{ythm}
The following are equivalent for all sets $X$ and $A$, and all $k>1$.
\begin{enumerate}[\rm 1.]

\item $X$ is $1$-generic relative to $A$.

\item For each $i<k$, the set $X^k_i$ is $1$-generic relative to
$X^k_{\neq i} \oplus A$.

\end{enumerate}
\end{thm}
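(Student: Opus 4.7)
The plan is to follow the recipe the paper sketches: relativize Yu's two-set version of van Lambalgen's theorem for $1$-genericity to an arbitrary base oracle $A$, and then iterate via induction on $k$. The relativized base case I would establish first: for any $A$-computable, infinite, coinfinite $Z \subseteq \omega$, the set $X$ is $1$-generic relative to $A$ if and only if $X \uhr Z$ is $1$-generic relative to $(X \uhr \overline{Z}) \oplus A$ and $X \uhr \overline{Z}$ is $1$-generic relative to $(X \uhr Z) \oplus A$. This is obtained from Yu's \cite[Proposition~2.2]{Y} by adding $A$ as a side oracle throughout: his construction uses dense $\Sigma^0_1$ sets of strings, each of which becomes a dense $\Sigma^{0,A}_1$ set in the relativized setting, and the $A$-computability of $Z$ lets the even/odd split in Yu's argument be replaced by the split along $Z$ and $\overline{Z}$.

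Next I would prove the equivalence by induction on $k \geq 2$. The base case $k=2$ is the relativized statement applied to $Z = \{n : n \text{ even}\}$, since then $X \uhr Z = X^2_0 = X^2_{\neq 1}$ and $X \uhr \overline{Z} = X^2_1 = X^2_{\neq 0}$. For the step from $k-1$ to $k$, apply the relativized base case with $Z = \{n : n \equiv 0 \bmod k\}$ to obtain that $X$ is $1$-generic relative to $A$ if and only if each of $X^k_0$ and $X^k_{\neq 0}$ is $1$-generic over the other joined with $A$. Setting $Y = X^k_{\neq 0}$, the computable bijection between $\{n : n \not\equiv 0 \bmod k\}$ and $\omega$ gives $Y^{k-1}_{j} \equiv\sub{T} X^k_{j+1}$ and $Y^{k-1}_{\neq j} \oplus X^k_0 \equiv\sub{T} X^k_{\neq (j+1)}$ for each $j < k-1$. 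Applying the inductive hypothesis to $Y$ with oracle $X^k_0 \oplus A$ and translating back then yields condition (2) for $i \in \{1, \ldots, k-1\}$, while the $i=0$ case is already handled by the base step.

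The only real obstacle is the bookkeeping in the inductive step: one must verify the Turing equivalences above under the natural renaming of indices, and use the standard fact that $1$-genericity relative to an oracle depends only on the Turing degree of that oracle. Both implications of the equivalence are then carried through the induction simultaneously via the biconditional form of the relativized base case.
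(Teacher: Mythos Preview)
Your proposal is correct and follows exactly the approach the paper indicates: the paper does not give a proof but simply states that the result is obtained by relativizing Yu's Proposition~2.2 and applying induction, just as for Theorem~\ref{vlthm}. Your relativized two-part base case and the inductive bookkeeping (in particular the identifications $Y^{k-1}_j = X^k_{j+1}$ and $Y^{k-1}_{\neq j} \oplus X^k_0 \equiv\sub{T} X^k_{\neq(j+1)}$) are exactly what is needed to carry this out.
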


Now we can establish the following analog to Corollary
\ref{weak2randcor}.

\begin{thm}
\label{1genthm}
If $X$ is $1$-generic then $X^{\mathfrak c} = {\bf 0}$, and hence
$ \mathcal{E}(A) \nleq\sub{nc} X$ for all noncomputable $A$. In
particular, in both the uniform and nonuniform coarse degrees, the
degree of $X$ is not in the image of the embedding
induced by $\mathcal{E}$.
\end{thm}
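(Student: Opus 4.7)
The plan is to mirror the argument for Corollary \ref{weak2randcor}, substituting Yu's theorem (Theorem \ref{ythm}) for van Lambalgen's theorem, and invoking the fact cited just before Theorem \ref{ythm} that $1$-genericity admits no noncomputable bases in place of the use of $K$-triviality.

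First I would show $X^{\mathfrak c} = {\bf 0}$. Let $A \in X^{\mathfrak c}$. Applying the contrapositive of Theorem \ref{compthm}, there must exist some $\epsilon > 0$ such that $A \leq\sub{T} D$ for every set $D$ with $\overline{\rho}(X \triangle D) \leq \epsilon$. Fix an integer $k > 1/\epsilon$. By Definition \ref{partdef}, for each $i < k$ the set $X^k_{\neq i}$ is Turing equivalent to a set $D$ with $\overline{\rho}(X \triangle D) \leq 1/k < \epsilon$, so $A \leq\sub{T} X^k_{\neq i}$. The unrelativized form of Theorem \ref{ythm} applied to $X$ then gives that each $X^k_i$ is $1$-generic relative to $X^k_{\neq i}$, and hence also relative to $X^k_{\neq i} \oplus A$, since the two oracles are Turing equivalent. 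Applying Theorem \ref{ythm} again, in the converse direction, yields that $X$ is $1$-generic relative to $A$. Since $X$ is itself a coarse description of $X$, the hypothesis $A \in X^{\mathfrak c}$ forces $A \leq\sub{T} X$, and as there are no noncomputable bases for $1$-genericity, $A$ must be computable.

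Once $X^{\mathfrak c} = {\bf 0}$ is established, the remaining conclusions follow formally. If $\mathcal{E}(A) \leq\sub{nc} X$, then every coarse description of $X$ computes a coarse description of $\mathcal{E}(A)$, which in turn computes $A$ (since $\mathcal{E}$ is a coarsening), so $A \in X^{\mathfrak c}$ and hence $A$ is computable. For the statement about embeddings, suppose toward contradiction that $X \equiv\sub{nc} \mathcal{E}(B)$ or $X \equiv\sub{uc} \mathcal{E}(B)$; either way $\mathcal{E}(B) \leq\sub{nc} X$ forces $B$ to be computable, whence $\mathcal{E}(B)$ itself is a computable coarse description of $\mathcal{E}(B)$, and then $X \leq\sub{nc} \mathcal{E}(B)$ produces a computable coarse description of $X$, contradicting the fact that no $1$-generic set is coarsely computable.

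Since both the compactness theorem and Yu's version of van Lambalgen's theorem are already in hand, I do not anticipate any substantial obstacle; the only subtlety is in correctly invoking the two unnumbered facts about $1$-generic sets (no noncomputable bases, and not coarsely computable) asserted in the discussion preceding Theorem \ref{ythm}.
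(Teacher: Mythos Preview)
Your proposal is correct and follows essentially the same route as the paper's proof: invoke Theorem~\ref{compthm} to obtain $k$ with $A \leq\sub{T} X^k_{\neq i}$ for all $i<k$, then use Yu's version of van Lambalgen (Theorem~\ref{ythm}) in both directions to conclude that $X$ is $1$-generic relative to $A$, and finish via the absence of noncomputable bases for $1$-genericity. The paper merely abbreviates the compactness step by writing ``As in the proof of Theorem~\ref{1randandK}'' and omits the routine derivation of the ``hence'' and ``in particular'' clauses that you spell out.
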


\begin{proof}
Let $A \in X^{\mathfrak c}$. As in the proof of Theorem
\ref{1randandK}, there is a $k$ such that $A \leq\sub{T} X^k_{\neq i}$
for all $i<k$. By the unrelativized form of Theorem \ref{ythm}, each
$X^k_i$ is $1$-generic relative to $X^k_{\neq i}$, and hence relative
to $X^k_{\neq i} \oplus A \equiv\sub{T} X^k_{\neq i}$. Again by
Theorem \ref{ythm}, $X$ is $1$-generic relative to $A$. But $A
\leq\sub{T} X$, so $A$ is computable.
\end{proof}

Igusa (personal communication) has also found the following
application of Theorem \ref{compthm}. We say that $X$ is
\emph{generically computable} if there is a partial computable
function $\varphi$ such that $\varphi(n) = X(n)$ for all $n$ in the
domain of $\varphi$, and the domain of $\varphi$ has density
$1$. Jockusch and Schupp \cite[Theorem 2.26]{JS} showed that there are
generically computable sets that are not coarsely computable, but by
Lemma 1.7 in \cite{HJMS}, if $X$ is generically computable then
$\gamma(X)=1$, where $\gamma$ is the coarse computability bound from
Definition \ref{bounddef}.

\begin{thm}[Igusa, personal communication]
If $\gamma(X)=1$ then $X^{\mathfrak c} = {\bf 0}$, and hence $\mathcal{E}(A)
\nleq\sub{nc} X$ for all noncomputable $A$. Thus, if $\gamma(X) = 1$
and $X$ is not coarsely computable then in both the uniform and
nonuniform coarse degrees, the degree
of $X$ is not in the image of the embedding induced by $\mathcal{E}$. In
particular, the above holds when $X$ is generically computable but not
coarsely computable.
\end{thm}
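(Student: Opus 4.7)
The plan is to derive this as a direct consequence of the cone-avoiding compactness theorem (Theorem \ref{compthm}), with the hypothesis $\gamma(X) = 1$ supplying the required family of near-descriptions. First I would fix $A \in X^{\mathfrak c}$ and suppose for contradiction that $A$ is noncomputable. By Definition \ref{bounddef}, the assumption $\gamma(X) = 1$ gives, for every $\epsilon > 0$, a computable set $C_\epsilon$ with $\overline{\rho}(X \triangle C_\epsilon) \leq \epsilon$; since $C_\epsilon$ is computable and $A$ is not, $A \nleq\sub{T} C_\epsilon$. Applying Theorem \ref{compthm} with $D_\epsilon = C_\epsilon$ then yields a coarse description $D$ of $X$ with $A \nleq\sub{T} D$, contradicting $A \in X^{\mathfrak c}$. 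Hence $X^{\mathfrak c} = {\bf 0}$.

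Next I would deduce $\mathcal{E}(A) \nleq\sub{nc} X$ for every noncomputable $A$: if $\mathcal{E}(A) \leq\sub{nc} X$, then every coarse description of $X$ would compute a coarse description of $\mathcal{E}(A)$, and every coarse description of $\mathcal{E}(A)$ computes $A$ because $\mathcal{E}$ is a coarsening, forcing $A \in X^{\mathfrak c}$ and contradicting the first step. For the embedding assertion, assume additionally that $X$ is not coarsely computable. For any $B$: if $B$ is computable then $\mathcal{E}(B)$ is coarsely computable and so lies in the least coarse degree, while $X$ does not; and if $B$ is noncomputable then $\mathcal{E}(B) \nleq\sub{nc} X$ by the preceding sentence. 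Either way $X \not\equiv\sub{nc} \mathcal{E}(B)$, and since $\equiv\sub{uc}$ implies $\equiv\sub{nc}$, also $X \not\equiv\sub{uc} \mathcal{E}(B)$. The ``in particular'' clause is immediate because generic computability of $X$ implies $\gamma(X) = 1$ by Lemma 1.7 of \cite{HJMS}, cited just before the theorem statement.

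I do not expect any serious obstacle: all the difficult work has already been absorbed into Theorem \ref{compthm}, and the present argument only has to observe that $\gamma(X) = 1$ delivers \emph{computable} witnesses $D_\epsilon$, for which the cone-avoidance requirement $A \nleq\sub{T} D_\epsilon$ is automatic as soon as $A$ itself is noncomputable. The only bookkeeping point is to keep clear the distinction between the three levels of the conclusion, namely $X^{\mathfrak c} = {\bf 0}$, the nonuniform statement $\mathcal{E}(A) \nleq\sub{nc} X$, and the statement about the image of the embedding, and to note that the last step uses the hypothesis that $X$ itself is not coarsely computable in order to separate $X$ from $\mathcal{E}(B)$ when $B$ is computable.
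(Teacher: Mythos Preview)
Your proposal is correct and follows essentially the same approach as the paper: both use that $\gamma(X)=1$ supplies computable sets $C_\epsilon$ with $\overline{\rho}(X\triangle C_\epsilon)\leq\epsilon$, note that $A\nleq\sub{T} C_\epsilon$ whenever $A$ is noncomputable, and then invoke Theorem~\ref{compthm}. The paper's proof stops after establishing $X^{\mathfrak c}=\mathbf{0}$, leaving the remaining assertions implicit, whereas you spell out the consequences about $\mathcal{E}(A)\nleq\sub{nc} X$ and the embedding; this extra detail is fine and entirely in line with the paper's reasoning elsewhere (cf.\ the discussion preceding Corollary~\ref{weak2randcor}).
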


\begin{proof}
Suppose that $\gamma(X)=1$ and $A$ is not computable. If $\epsilon>0$
then there is a computable set $C$ such that $\overline{\rho}(X
\triangle C) < \epsilon$. Since $C$ is computable, $A \nleq\sub{T}
C$. By Theorem \ref{compthm}, $A \notin X^{\mathfrak c}$.
\end{proof}

\section{Minimal pairs in the uniform and nonuniform\\ coarse degrees}
\label{minpairs}

For any degree structure that acts as a measure of information
content, it is reasonable to expect that if two sets are sufficiently
random relative to each other, then their degrees form a minimal
pair. For the Turing degrees, it is not difficult to show that if $Y$
is not computable and $X$ is weakly $2$-random relative to $Y$, then
the degrees of $X$ and $Y$ form a minimal pair. On the other hand,
Ku{\v c}era \cite{K} showed that if $X,Y \leq\sub{T} \emptyset'$ are
both $1$-random, then there is a noncomputable set $A \leq\sub{T}
X,Y$, so there are relatively $1$-random sets whose degrees do not
form a minimal pair. As we will see, the situation for the
nonuniform coarse degrees is similar, but ``one jump up''.

For an interval $I$, let $\rho_I(X)=\frac{|X \cap I|}{|I|}$.

\begin{lem}
\label{jlem}
Let $J_k=[2^{k}-1,2^{k+1}-1)$. Then $\rho(X)=0$ if and only if $\lim_k
\rho_{J_k}(X)=0$.
\end{lem}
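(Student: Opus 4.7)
The plan is to reduce both directions to a routine bookkeeping argument that compares the dyadic block averages $\rho_{J_k}(X)$ with the prefix averages $\rho_n(X)$, exploiting only the facts that $|J_k|=2^k$ and that $J_k$ is contained in the initial segment $[0,2^{k+1}-1)$. Since $\rho(X)=0$ is equivalent to $\overline{\rho}(X)=0$, i.e., $\lim_n \rho_n(X)=0$, both statements become unconditional limit statements and no distinction between upper and lower density will be needed.

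For the forward direction, I would observe that $X\cap J_k\subseteq X\uhr(2^{k+1}-1)$, so $|X\cap J_k|\le |X\uhr(2^{k+1}-1)|$. Dividing by $|J_k|=2^k$ gives $\rho_{J_k}(X)\le \frac{|X\uhr(2^{k+1}-1)|}{2^k}$, which is at most $2\,\rho_{2^{k+1}-1}(X)$ up to a negligible correction coming from the $-1$. If $\rho(X)=0$, then $\rho_{2^{k+1}-1}(X)\to 0$, and hence $\rho_{J_k}(X)\to 0$.

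For the backward direction, fix $\epsilon>0$ and choose $K$ so that $\rho_{J_j}(X)<\epsilon$ for all $j\ge K$. Given $n$, let $k$ be the unique index with $n\in J_k$, so $2^k-1\le n<2^{k+1}-1$, in particular $n\ge 2^k-1$ and $n<2\cdot 2^k$. Writing $|X\uhr n|$ as $\sum_{j<k}|X\cap J_j|+|X\cap[2^k-1,n)|$, I would bound the first $K$ summands crudely by $\sum_{j<K}2^j<2^K$, the middle summands by $\epsilon\sum_{K\le j<k}2^j<\epsilon\cdot 2^k$, and the last term by $\epsilon\cdot 2^k$ (using $|[2^k-1,n)|<2^k$ and $\rho_{J_k}(X)<\epsilon$ when $k\ge K$). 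Dividing by $n$ and using $2^k\le 2(n+1)$ then yields $\rho_n(X)\le \frac{2^K}{n}+C\epsilon$ for an absolute constant $C$, so $\limsup_n \rho_n(X)\le C\epsilon$. Since $\epsilon$ was arbitrary, $\rho(X)=0$.

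The calculations are genuinely routine; there is no real obstacle. The only minor point to be careful about is the off-by-one from $|J_k|=2^k$ vs.\ the right endpoint $2^{k+1}-1$, and the need to compare $n$ with $2^k$ via the two-sided bound $2^k-1\le n<2\cdot 2^k$ so that the factor swapping between $n$ and $2^k$ costs only a harmless constant.
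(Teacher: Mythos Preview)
Your proposal is correct and follows essentially the same approach as the paper: both directions compare $\rho_{J_k}(X)$ with $\rho_{2^{k+1}-1}(X)$ via the inequality $\rho_{J_k}(X)\le 2\,\rho_{2^{k+1}-1}(X)$ (the paper states this as the contrapositive), and for the other direction both split $[0,n)$ into the dyadic blocks $J_j$, bound the first few crudely and the rest by $\epsilon\cdot|J_j|$, then use $n\ge 2^k-1$ to absorb the geometric sum. The only cosmetic difference is that the paper chooses the block bound $\frac{\epsilon}{2}$ to get exactly $\epsilon$ at the end, whereas you carry a harmless constant $C$.
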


\begin{proof}
First suppose that $\limsup_k \rho_{J_k}(X) > 0$. Since $|J_k| = 2^k$,
we have $\overline{\rho}(X) \geq \limsup_k \rho_{2^{k+1}-1}(X) \geq
\limsup_k \frac{\rho_{J_k}(X)}{2} > 0$.

Now suppose that $\limsup_k \rho_{J_k}(X) = 0$. Fix
$\epsilon>0$. If $m$ is sufficiently large, $k \geq m$, and $n \in
J_k$, then $$|X \cap [0,n)| \leq |X \cap [0,2^{k+1}-1)| \leq
\sum_{i=0}^{m-1} |J_i| + \sum_{i=m}^k \frac{\epsilon}{2}
|J_i|.$$ If $k$ is sufficiently large then this sum is less than
$\epsilon (2^k-1)$, whence $\rho_n(X) < \frac{\epsilon
(2^k-1)}{n} \leq \frac{\epsilon n}{n}=\epsilon$. Thus
$\limsup_n \rho_n(X) \leq \epsilon$. Since $\epsilon$ is
arbitrary, $\limsup_n \rho_n(X) = 0$.
\end{proof}

\begin{thm}
If $A$ is not coarsely computable and $X$ is weakly $3$-random
relative to $A$, then there is no $X$-computable coarse description of
$A$.  In particular,  $A \nleq\sub{nc} X$.
\end{thm}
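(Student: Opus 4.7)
The plan is to prove the contrapositive: assuming $X$ is weakly $3$-random relative to $A$ and some Turing functional $\Phi$ makes $\Phi^X$ a coarse description of $A$, I will show that $A$ must be coarsely computable. Consider the class $\mathcal{B} = \{Y : \Phi^Y \text{ is a coarse description of } A\}$; clearly $X \in \mathcal{B}$. A direct analysis shows $\mathcal{B}$ is $\Pi^0_3(A)$: the event ``$\Phi^Y$ is not total'' is $\Sigma^0_2$, while under the assumption of totality, ``$\overline{\rho}(\Phi^Y \triangle A) > 0$'' takes the form $\exists \epsilon\,\forall N\,\exists n \geq N : \rho_n(\Phi^Y \triangle A) > \epsilon$ with the innermost relation decidable from $Y \oplus A$; combining these two cases by disjunction gives that $2^\omega \setminus \mathcal{B}$ is $\Sigma^0_3(A)$.

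If $\mu(\mathcal{B}) = 0$, then $\mathcal{B}$ is a $\Pi^0_3(A)$ null class containing $X$, contradicting weak $3$-randomness of $X$ relative to $A$. So I may assume $\mu(\mathcal{B}) > 0$, and use the Lebesgue density theorem to pick a string $\sigma$ and a small $\delta > 0$ (say $\delta = 1/10$) such that $\mu(\mathcal{B} \cap [\sigma]) > (1-\delta)\mu([\sigma])$. For each $n$ and $b \in \{0,1\}$, the class $V^b_n = \{Y \in [\sigma] : \Phi^Y(n)\converges = b\}$ is uniformly $\Sigma^0_1$ with left-c.e.\ measure. Because $\mu(V^0_n) + \mu(V^1_n) \geq \mu(\mathcal{B} \cap [\sigma]) > (1-\delta)\mu([\sigma])$, one of these measures must exceed the fixed threshold $t = (\tfrac{1}{2} - \delta)\mu([\sigma])$; approximating both from below in parallel therefore produces, uniformly computably in $n$, a bit $C(n) = b$ for which $\mu(V^b_n) > t$ is witnessed first.

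Now I check that the computable set $C$ is a coarse description of $A$, which contradicts the hypothesis. For each $Y \in \mathcal{B} \cap [\sigma]$ we have $\#\{n < N : \Phi^Y(n) \neq A(n)\}/N \to 0$, so by dominated convergence the Ces\`aro average over $n < N$ of $\mu(\{Y \in \mathcal{B} \cap [\sigma] : \Phi^Y(n) \neq A(n)\})/\mu(\mathcal{B} \cap [\sigma])$ tends to $0$. On the other hand, whenever $C(n) \neq A(n)$ one has $\mu(\{Y \in \mathcal{B} \cap [\sigma] : \Phi^Y(n) \neq A(n)\}) \geq \mu(V^{C(n)}_n) - \mu([\sigma] \setminus \mathcal{B}) > (\tfrac{1}{2} - 2\delta)\mu([\sigma])$, so the conditional probability of disagreement is bounded below by a positive constant. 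Combining the two bounds forces $\#\{n < N : C(n) \neq A(n)\}/N \to 0$, so $C$ witnesses that $A$ is coarsely computable.

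The main obstacles are twofold. First, pinning down the $\Pi^0_3(A)$ bound on $\mathcal{B}$: the naive calculation from ``$\Phi^Y$ total and $\rho(\Phi^Y \triangle A) = 0$'' yields only $\Pi^0_4(A)$, and one needs the split-on-totality trick above to save an alternation. Second, guaranteeing that $C$ is actually \emph{computable}; this is handled by the threshold trick, which exploits the fact that $\mathcal{B}$ is dense inside $[\sigma]$ to force at least one of the $\Sigma^0_1$ approximations of $\mu(V^0_n)$ and $\mu(V^1_n)$ to eventually cross $t$, so the parallel search always terminates. The ``in particular $A \nleq\sub{nc} X$'' clause is immediate since $X$ is trivially a coarse description of itself.
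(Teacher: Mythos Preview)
Your proof is correct and follows the same overall architecture as the paper's: define the class $\mathcal{B}$ of oracles $Y$ for which $\Phi^Y$ is a coarse description of $A$, argue it is $\Pi^0_3(A)$, invoke weak $3$-randomness to get $\mu(\mathcal{B})>0$, apply the Lebesgue Density Theorem to find a cylinder $[\sigma]$ in which $\mathcal{B}$ has large conditional measure, and then extract a computable coarse description of $A$.

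The genuine difference lies in the extraction step. The paper works block-by-block on the intervals $J_k=[2^k-1,2^{k+1}-1)$: for each $k$ it finds a large cluster $R_k$ of strings whose $\Psi$-outputs are mutually close on $J_k$, copies one of them, and uses Lemma~\ref{jlem} to translate block-wise agreement into density-$0$ symmetric difference. Your argument is bit-by-bit: for each $n$ you take whichever value $b$ first has $\mu(V^b_n)$ cross a fixed threshold, and then you use dominated convergence and Fubini to show that the density of disagreements with $A$ goes to zero. Your route avoids the block decomposition and Lemma~\ref{jlem} entirely, trading them for a clean measure-theoretic averaging argument; this is shorter and arguably more transparent, though the paper's clustering idea is more hands-on and self-contained.

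Two small remarks. First, your claim that the ``naive'' complexity calculation yields $\Pi^0_4(A)$ is not quite right: as the paper shows, writing the density condition as $\forall k\,\exists m\,\forall n>m\,[\rho_n(\Phi^Y\triangle A)<2^{-k}]$ and interpreting the innermost matrix as the $\Pi^0_1(A)$ statement ``for all stages $s$ at which $\Phi^Y\upharpoonright n$ has converged, $\rho_n(\Phi^Y[s]\triangle A)<2^{-k}$'' already gives $\Pi^0_3(A)$, so your split-on-totality manoeuvre is unnecessary (though harmless). Second, your phrase ``innermost relation decidable from $Y\oplus A$'' is a slight overstatement---it is $\Sigma^0_1(Y\oplus A)$, not $\Delta^0_1$---but this is absorbed by the adjacent existential quantifier, so the $\Sigma^0_3(A)$ bound on the complement is correct.
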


\begin{proof}
Suppose that $\Phi^X$ is a coarse description of $A$ and let $$\mathcal
P=\{Y : \Phi^Y \textrm{ is a coarse description of } A\}.$$ Then $Y \in
\mathcal P$ if and only if
\begin{enumerate}[\rm 1.]

\item $\Phi^Y$ is total, which is a $\Pi^0_2$ property, and

\item for each $k$ there is an $m$ such that, for all $n>m$, we have
$\rho_n(\Phi^Y \triangle A)<2^{-k}$, which is a $\Pi^{0,A}_3$ property.

\end{enumerate}
Thus $\mathcal P$ is a $\Pi^{0,A}_3$ class, so it suffices to show
that if $A$ is not coarsely computable then $\mu(\mathcal P)=0$.

We prove the contrapositive. Suppose that $\mu(\mathcal P)>0$. Then,
by the Lebesgue Density Theorem, there is a $\sigma$ such that
$\mu(\mathcal P \cap \open{\sigma})>\frac{3}{4}2^{-|\sigma|}$. It is
now easy to define a Turing functional $\Psi$ such that the measure of
the class of $Y$ for which $\Psi^Y$ is a coarse description of $A$ is
greater than $\frac{3}{4}$. Define a computable set $D$ as
follows. Let $J_k=[2^{k}-1,2^{k+1}-1)$.  For each $k$, wait until we
find a finite set of strings $S_k$ such that
$\mu(\open{S_k})>\frac{3}{4}$ and $\Psi^\sigma$ converges on all of
$J_k$ for each $\sigma \in S_k$ (which must happen, by our choice of
$\Psi$). Let $n_k$ be largest such that there is a set $R_k
\subseteq S_k$ with $\mu(\open{R_k})>\frac{1}{2}$ and
$\rho_{J_k}(\Psi^\sigma \triangle \Psi^\tau) \leq 2^{-n_k}$ for all
$\sigma,\tau \in R_k$. Let $\sigma \in R_k$ and define $D \uhr J_k =
\Psi^\sigma \uhr J_k$.

We claim that $D$ is a coarse description of $A$. By Lemma \ref{jlem},
it is enough to show that $\lim_k \rho_{J_k}(D \triangle A)=0$. Fix
$n$. Let $\mathcal B_k$ be the class of all $Y$ such that $\Psi^Y$
converges on all of $J_k$ and $\rho_{J_k}(\Psi^Y \triangle A) \leq
2^{-n}$. If $\Psi^Y$ is a coarse description of $A$ then, again by
Lemma \ref{jlem}, $\rho_{J_k}(\Psi^Y \triangle A) \leq 2^{-n}$ for all
sufficiently large $k$, so there is an $m$ such that $\mu(\mathcal
B_k)>\frac{3}{4}$ for each $k>m$, and hence $\mu(\mathcal B_k \cap
\open{S_k})>\frac{1}{2}$ for each $k>m$. Let $T_k = \{\sigma \in S_k :
\rho_{J_k}(\Psi^\sigma \triangle A) \leq 2^{-n}\}$. Then $\open{T_k} =
\mathcal{B}_k \cap \open{S_k}$, so $\mu(\open{T_k}) > \frac{1}{2}$ for
each $k > m$. Furthermore, by the triangle inequality for Hamming
distance, $\rho_{J_k}(\Psi^\sigma \triangle \Psi^\tau) \leq
2^{-(n-1)}$ for all $\sigma, \tau \in T_k$. It follows that, for each
$k>m$, we have $n_k \geq n-1$, and at least one element $Y$ of
$\mathcal B_k$ is in $\open{R_k}$ (where $R_k$ is as in the definition
of $D$), which implies that $$\rho_{J_k}(D \triangle A) \leq
\rho_{J_k}(D \triangle \Psi^Y) + \rho_{J_k}(\Psi^Y \triangle A) \leq
2^{-n_k}+2^{-n} < 2^{-n+2}.$$ Since $n$ is arbitrary, $\lim_k
\rho_{J_k}(D \triangle A)=0$.
\end{proof}

\begin{cor}
If $Y$ is not coarsely computable and $X$ is weakly $3$-random
relative to $Y$, then the nonuniform coarse degrees of $X$ and $Y$
form a minimal pair, and hence so do their uniform coarse degrees.
\end{cor}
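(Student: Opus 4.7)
The plan is to reduce the statement to the preceding theorem. First I would observe that both degrees really are above $\mathbf{0}$: $Y$ is not coarsely computable by hypothesis, and $X$, being weakly $3$-random relative to $Y$, is in particular $1$-random, so $X$ is also not coarsely computable (no $1$-random set has a computable set at symmetric difference density $0$). It therefore suffices to verify the meet condition in the nonuniform coarse degrees.

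So suppose $A \leq\sub{nc} X$ and $A \leq\sub{nc} Y$, and assume for contradiction that $A$ is not coarsely computable. Applying $A \leq\sub{nc} Y$ to the trivial coarse description $Y$ of itself produces a coarse description $D$ of $A$ with $D \leq\sub{T} Y$. By the triangle inequality for the density of symmetric differences, $D$ and $A$ have exactly the same coarse descriptions; in particular $D$ is not coarsely computable either. Since $D \leq\sub{T} Y$, weak $3$-randomness of $X$ relative to $Y$ yields weak $3$-randomness of $X$ relative to $D$. The previous theorem, applied with $D$ in place of $A$, then says that no $X$-computable set is a coarse description of $D$, hence none is a coarse description of $A$. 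But $A \leq\sub{nc} X$ means that $X$, viewed as a coarse description of itself, computes some coarse description of $A$, a contradiction. For the uniform case, observe that $\leq\sub{uc}$ implies $\leq\sub{nc}$, so the same $A$ would be coarsely computable, and the uniform coarse degrees of $X$ and $Y$ form a minimal pair as well.

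The only delicate point in this plan is the swap from $A$ to $D$: the hypotheses of the previous theorem require the set to be weakly $3$-randomized against, which forces us to pass to a $Y$-computable coarse description $D$ of $A$ before applying it. That step is harmless precisely because $D \triangle A$ has density $0$, so ``coarse description of $D$'' and ``coarse description of $A$'' mean the same thing, and this identification is what lets the conclusion of the theorem contradict the hypothesis $A \leq\sub{nc} X$. Everything else is bookkeeping.
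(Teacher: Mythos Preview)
Your proposal is correct and follows essentially the same route as the paper: pass from $A$ to a $Y$-computable coarse description $D$ of $A$, use $D \leq\sub{T} Y$ to transfer weak $3$-randomness of $X$ to be relative to $D$, and apply the preceding theorem to $D$. The paper phrases this via the contrapositive (concluding $D$ is coarsely computable) rather than by contradiction, and it leaves implicit the check that both $X$ and $Y$ have nonzero coarse degree, but the substance is the same.
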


\begin{proof}
Let $A \leq\sub{nc} X,Y$. Then $Y$ computes a coarse description $D$
of $A$. We have $D \leq\sub{nc} X$, and $X$ is weakly $3$-random
relative to $D$, so by the theorem, $D$ is coarsely computable, and
hence so is $A$.
\end{proof}

For the nonuniform coarse degrees at least, this corollary does not
hold of $2$-randomness in place of weak $3$-randomness. To establish
this fact, we use the following complementary results. The first was
proved by Downey, Jockusch, and Schupp \cite[Corollary 3.16]{DJS} in
unrelativized form, but it is easy to check that their proof
relativizes.

\begin{thm}[Downey, Jockusch, and Schupp \cite{DJS}]
\label{DJSthm}
If $A$ is c.e., $\rho(A)$ is defined, and $A' \leq\sub{T} D'$, then $D$
computes a coarse description of $A$.
\end{thm}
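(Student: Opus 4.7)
The plan is to construct a $D$-computable set $C$ with $\rho(A \triangle C) = 0$ by partitioning $\omega$ into successively larger blocks and, on each block, copying $A$ at a suitably late stage of its c.e.\ enumeration. Fix a computable enumeration $A = \bigcup_s A[s]$ and set $r = \rho(A)$. Because $|A \cap [0,n)|$ is $A$-computable, the Dedekind cuts of $r$ are c.e.\ in $A'$, and hence c.e.\ in $D'$ by hypothesis, so $r$ is $D'$-computable as a real. By Shoenfield's limit lemma, there is a rational-valued $D$-computable function $\hat{q}(k,s)$ such that $\lim_s \hat{q}(k,s) = q_k$ exists for each $k$ and $|q_k - r| \leq 2^{-k}$. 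These values will serve as threshold estimates for $r$.

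The construction proceeds in stages. Having defined $n_{k-1}$ (with $n_{-1} = 0$), at stage $k$ I $D$-computably search, over pairs $(n,s)$ within a $D$-computable bound $F(k)$, for the least pair with $n \geq 2^k n_{k-1}$ and $|A[s] \cap [0,n)| \geq (\hat{q}(k,k) - 2^{1-k}) \cdot n$. If one is found, set $n_k = n$ and $s_k = s$; otherwise fall back to $n_k = n_{k-1}+1$ and $s_k = 0$. Define $C \uhr [n_{k-1}, n_k) = A[s_k] \uhr [n_{k-1}, n_k)$. Since everything is uniformly $D$-computable, $C \leq\sub{T} D$.

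For verification, for all sufficiently large $k$ we have $\hat{q}(k,k) = q_k$ (so $|\hat{q}(k,k) - r| \leq 2^{-k}$) and $\rho_n(A) < r + 2^{-k}$ for all $n \geq n_k$, by the density hypothesis. Combining these with the threshold inequality yields $|(A \setminus A[s_k]) \cap [0, n_k)| \leq O(2^{-k}) \cdot n_k$; since $A[s_k] \subseteq A$, the symmetric difference $A \triangle C$ restricted to $[0, n_k)$ inherits this bound, with the geometric growth $n_k \geq 2^k n_{k-1}$ absorbing contributions from earlier blocks. Hence $\rho(A \triangle C) = 0$, and $C$ is a coarse description of $A$ computed by $D$.

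The main obstacle is that $r$ is only $D$-limit-computable, so the threshold $\hat{q}(k,k) - 2^{1-k}$ may overshoot $r$ at small $k$ and the stage-$k$ search may fail. This is circumvented by the bound $F(k)$ and the fallback to trivial extensions: once $k$ is large enough that $\hat{q}(k,\cdot)$ has stabilized at $q_k$ and $F(k)$ is sufficiently large, the search succeeds with the intended density tolerance, and the finitely many earlier stages where it fails affect only a bounded initial segment of $C$ and so do not disturb the density-zero conclusion.
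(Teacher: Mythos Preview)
The paper does not itself prove this theorem --- it is quoted from \cite{DJS} (their Corollary~3.16 in unrelativized form) --- so there is no in-paper argument to compare against. Your underlying mechanism is the right one: since $A[s]\subseteq A$, once both $\rho_n(A[s])$ and $\rho_n(A)$ lie within $\epsilon$ of $r=\rho(A)$ the strings $A[s]\uhr n$ and $A\uhr n$ differ in at most $2\epsilon n$ bits, and splicing such approximations across growing blocks gives a coarse description.

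There is, however, a genuine gap in how you obtain the thresholds $D$-computably. You assert that $\hat q(k,k)=q_k$ for all sufficiently large $k$, but the Limit Lemma gives no control on the modulus of convergence: for a generic $D$-computable approximation the diagonal $k\mapsto\hat q(k,k)$ need not converge to $r$ at all, so $\hat q(k,k)-2^{1-k}$ may exceed $r$ infinitely often, the bounded search then fails at each such $k$ regardless of how $F$ is chosen, and the fallback produces blocks on which $C=A[0]$ misses all of $A$. (Producing a $D$-computable sequence within $2^{-k}$ of $r$ would amount to $r\leq\sub{T}D$, which is not given.) The repair is to fold the approximation into the search variable: take a single $D$-computable sequence $p_s\to r$ (available since $r\leq\sub{T}A'\leq\sub{T}D'$) and search for the least $s\geq k$ admitting an $n$ in the desired range with $|\rho_n(A[s])-p_s|\leq 2^{-k}$; this always halts, and $s_k\geq k\to\infty$ then forces $p_{s_k}\to r$, hence $\rho_{n_k}(A[s_k])\to r$ as you need. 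A smaller issue: your verification bounds $\rho_{n_k}(C\triangle A)$ but not $\rho_M(C\triangle A)$ for $n_{k-1}<M<n_k$, and with $n_k/n_{k-1}\geq 2^k$ unbounded the block-$k$ error $O(2^{-k})\,n_k$ can swamp a denominator $M\approx n_{k-1}$; one must either cap $n_k/n_{k-1}$ or strengthen the search condition so that $\rho_m(A[s_k])$ is close to the threshold throughout the block.
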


\begin{thm}[Hirschfeldt, Jockusch, McNicholl, and Schupp \cite{HJMS}]
\label{HJMSthm}
Every nonlow c.e.\ degree contains a c.e.\ set $A$ such that
$\rho(A)=\frac{1}{2}$ and $A$ is not coarsely computable.
\end{thm}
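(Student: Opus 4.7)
The plan is to build, from a given c.e.\ set $D$ of nonlow degree, a c.e.\ set $A \equiv\sub{T} D$ with $\rho(A)=\frac{1}{2}$ that is not coarsely computable, by a priority construction combining explicit density control with nonlow permitting. First I would partition $\omega$ into consecutive blocks $I_0, I_1, \ldots$ of rapidly growing lengths (say $|I_n|=2^{2n}$), pick a distinguished coding position $c_n \in I_n$ for each $n$, and split the rest of $I_n$ into a ``diagonalization reserve'' $N_n$ and a ``balancing reserve'' $B_n$ of roughly equal size. Coding is done by enumerating $c_n$ into $A$ exactly when $n$ enters $D$; since $\{c_n : n \in \omega\}$ has density $0$, the coding contributes nothing to $\rho(A)$, while standard permitting bookkeeping yields $A \leq\sub{T} D$, and reading the bits $A(c_n)$ recovers $D$, so $A \equiv\sub{T} D$.

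The diagonalization requirements are $R_e$: if $\varphi_e$ is the characteristic function of a set $C_e$, then $\overline{\rho}(A \triangle C_e) > 0$. The strategy for $R_e$ assigns an interval $I_{n_e}$ by priority, waits until $\varphi_e$ has converged on all of $I_{n_e}$, and then enumerates elements of $N_{n_e}$ into $A$ so as to force the number of positions in $I_{n_e}$ at which $A$ and $C_e$ disagree to exceed $|I_{n_e}|/4$, which by the rapid growth of $|I_n|$ makes $\overline{\rho}(A \triangle C_e) > 0$. Meanwhile the balancing reserves $B_n$ act as a buffer: after each $R_e$-action completes, I would enumerate or withhold elements of subsequent $B_m$ to steer the partial density $\rho_m(A)$ back toward $\frac{1}{2}$.

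The nonlowness of $D$ is what makes the timing of the $R_e$-actions work: although totality of $\varphi_e$ is undecidable, a $D$-permitting schedule in the spirit of standard nonlow permitting ensures that for each $e$ enough permissions arrive at stages by which $\varphi_e$ has converged on $I_{n_e}$ for the diagonalization to complete. The hard part will be to reconcile three competing demands, the global density target $\frac{1}{2}$, the sparse coding of $D$ at the $c_n$, and the local flooding of $I_{n_e}$ for each $R_e$, and in particular to verify a balancing lemma that $\liminf_n \rho_n(A) = \limsup_n \rho_n(A) = \frac{1}{2}$ despite the finitely many $R_e$-perturbations affecting any initial segment; this is where the rapid growth of $|I_n|$ does the essential work.
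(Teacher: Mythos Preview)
This theorem is not proved in the present paper; it is quoted from \cite{HJMS} and used only as a black box in the proof of Theorem~\ref{2randthm}, so there is no proof here against which to compare your proposal directly.

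That said, your plan has a genuine gap. You claim that forcing $A$ and $C_e$ to disagree on more than $|I_{n_e}|/4$ positions of a \emph{single} interval $I_{n_e}$ yields $\overline{\rho}(A \triangle C_e) > 0$. But $\overline{\rho}$ is a $\limsup$: achieving $\rho_m(A \triangle C_e) \geq c$ at the one value $m = \max I_{n_e} + 1$ says nothing about the tail, since the contribution of any fixed interval to $\rho_m$ decays to $0$ as $m \to \infty$. To make $A$ not coarsely computable you must ensure, for each total $\varphi_e$, that $A$ disagrees with $C_e$ on a fixed positive fraction of \emph{infinitely many} intervals. Each $R_e$ therefore needs infinitely many successful actions, all of which must be interleaved with the balancing and with every other $R_{e'}$.

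This correction is exactly where the nonlowness of $D$ does its work, and your one sentence about ``a $D$-permitting schedule in the spirit of standard nonlow permitting'' does not carry the argument. Note that by Theorem~\ref{DJSthm} applied with $D = \emptyset$, every \emph{low} c.e.\ set whose density exists is coarsely computable, so the hypothesis is sharp: the $A$ you build will itself be nonlow, and your construction must genuinely exploit $D' >\sub{T} \emptyset'$. The substantive content of the proof is a permitting mechanism that, for each $e$ with $\varphi_e$ total, secures infinitely many intervals on which the diagonalization is both triggered (by convergence of $\varphi_e$) and permitted (by a change in $D$), while still keeping the density of $A$ at $\tfrac12$. Your sketch does not supply this mechanism.
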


\begin{thm}
\label{2randthm}
Let $X,Y \leq\sub{T} \emptyset''$ (which is equivalent to $\mathcal{E}(X),\mathcal{E}(Y)
\leq\sub{nc} \mathcal{E}(\emptyset'')$). If $X$ and $Y$ are both $2$-random,
then there is an $A \leq\sub{nc} X,Y$ such that $A$ is not coarsely
computable. In particular, there is a pair of relatively $2$-random
sets whose nonuniform coarse degrees do not form a minimal pair.
\end{thm}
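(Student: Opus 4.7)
The plan is to combine Theorems~\ref{HJMSthm} and~\ref{DJSthm}. I aim to produce a c.e.\ set $A$ with $\rho(A) = \tfrac{1}{2}$ that is not coarsely computable, lying in a carefully chosen nonlow c.e.\ degree $\mathbf{a}$ so that $\mathbf{a}' \leq\sub{T} D'$ for every coarse description $D$ of $X$ or of $Y$. Given such $A$, Theorem~\ref{DJSthm} immediately yields that every such $D$ computes a coarse description of $A$, so $A \leq\sub{nc} X$ and $A \leq\sub{nc} Y$, which, together with $A$ not being coarsely computable, establishes the first assertion. The ``in particular'' clause then follows by taking $X,Y \leq\sub{T} \emptyset''$ to be a relatively $2$-random pair --- for instance, the even and odd parts of a single $2$-random set below $\emptyset''$, which are relatively $2$-random by van Lambalgen's theorem.

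The key step is to locate such a degree $\mathbf{a}$. First I would record two consequences of the hypotheses: since $X$ is $2$-random and $\leq\sub{T} \emptyset''$, it is $1$-random relative to $\emptyset'$ and $\Delta^0_2$ relative to $\emptyset'$, so Ku\v{c}era's theorem relativized to $\emptyset'$ gives $\emptyset' \leq\sub{T} X$, whence $X' \equiv\sub{T} X \leq\sub{T} \emptyset''$, and similarly for $Y$; also every c.e.\ $\mathbf{a}$ has $\mathbf{a}' \leq\sub{T} \emptyset''$. I would then establish a jump lower bound on arbitrary coarse descriptions $D$, adapting the cone-avoiding compactness method of Theorem~\ref{compthm} one level up: if some coarse description $D$ of $X$ had $\mathbf{a}' \not\leq\sub{T} D'$, then iteratively extending $D$ while cone-avoiding $\mathbf{a}'$ in the jump, and combining this with the partitioning trick of Definition~\ref{partdef} together with Theorem~\ref{vlthm}, one would produce a $1$-random partitioning of $X$ violating the relativized Ku\v{c}era bound and so contradict the $2$-randomness of $X$. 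This identifies an appropriate nonlow $\mathbf{a}$, and Theorem~\ref{HJMSthm} then supplies a c.e.\ set $A \in \mathbf{a}$ with $\rho(A) = \tfrac{1}{2}$ that is not coarsely computable.

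The main obstacle is this jump-lowering step: controlling $D'$ uniformly across \emph{all} coarse descriptions $D$ of $X$ or $Y$, since a coarse description of $X$ may differ from $X$ on an arbitrary density-$0$ set and so its Turing degree (and jump) is a priori quite unconstrained. Overcoming this is the technical heart of the argument, and it is what determines which specific nonlow c.e.\ degree $\mathbf{a}$ can be invoked in Theorem~\ref{HJMSthm}; a workable fallback, should the compactness adaptation fall short of $\mathbf{a}' = \emptyset''$, is to relativize the construction of Corollary~\ref{quartercor} to $\emptyset'$ applied to the pair $(X,Y)$ and then interleave the resulting reduction requirements with the density and non-coarse-computability requirements from the proof of Theorem~\ref{HJMSthm} in a single finite-injury construction.
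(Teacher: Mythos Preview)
Your overall architecture --- find a nonlow c.e.\ degree $\mathbf{a}$ with $\mathbf{a}' \leq\sub{T} D'$ for every coarse description $D$ of $X$ or $Y$, then invoke Theorems~\ref{HJMSthm} and~\ref{DJSthm} --- is exactly the paper's approach. But the roles of your ``primary'' and ``fallback'' routes are reversed. The primary route you sketch (lifting Theorem~\ref{compthm} ``one level up'' and combining with partitioning and Theorem~\ref{vlthm}) does not lead anywhere: Theorem~\ref{compthm} manufactures a \emph{single} cone-avoiding coarse description, whereas what you need is a uniform lower bound on $D'$ across \emph{all} coarse descriptions, and in any case you cannot derive a contradiction from ``$\mathbf{a}' \nleq\sub{T} D'$'' before $\mathbf{a}$ has been constructed. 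Your fallback --- relativize Corollary~\ref{quartercor} to $\emptyset'$ --- is in fact the correct main step and is precisely what the paper does.

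What you are missing is a short black-box step that removes the need for any interleaved finite-injury construction. Relativizing Corollary~\ref{quartercor} to $\emptyset'$ (using $X,Y \leq\sub{T} \emptyset''$ and that $X,Y$ are $1$-random relative to $\emptyset'$) yields an $\emptyset'$-c.e.\ set $J >\sub{T} \emptyset'$ with $J \leq\sub{T} D \oplus \emptyset' \leq\sub{T} D'$ for every coarse description $D$ of $X$ or of $Y$. Now apply the Sacks Jump Inversion Theorem: there is a c.e.\ set $B$ with $B' \equiv\sub{T} J$, and $B$ is nonlow since $J >\sub{T} \emptyset'$. Theorem~\ref{HJMSthm} then supplies a c.e.\ $A \equiv\sub{T} B$ with $\rho(A)=\tfrac12$ that is not coarsely computable, and $A' \equiv\sub{T} J \leq\sub{T} D'$ feeds directly into Theorem~\ref{DJSthm}. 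No new priority argument is required.

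A minor slip: from $\emptyset' \leq\sub{T} X$ you cannot conclude $X' \equiv\sub{T} X$; that line should be dropped (it is not used anyway).
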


\begin{proof}
Since $X$ and $Y$ are both $1$-random relative to $\emptyset'$, by the
relativized form of Corollary \ref{quartercor} there is an
$\emptyset'$-c.e.\ set $J >\sub{T} \emptyset'$ such that for every
coarse description $D$ of either $X$ or $Y$, we have that $D \oplus
\emptyset'$ computes $J$, and hence so does $D'$. By the Sacks Jump
Inversion Theorem \cite{Sa}, there is a c.e.\ set $B$ such that $B'
\equiv\sub{T} J$. By Theorem \ref{HJMSthm}, there is a c.e.\ set $A
\equiv\sub{T} B$ such that $\rho(A)=\frac{1}{2}$ and $A$ is not
coarsely computable. Let $D$ be a coarse description of either $X$ or
$Y$. Then $D' \geq\sub{T} J \equiv\sub{T} A'$, so by Theorem
\ref{DJSthm}, $D$ computes a coarse description of $A$.
\end{proof}

We do not know whether this theorem holds for uniform coarse reducibility.

\section{Open Questions}
\label{questions}

We finish with a few questions raised by our results.

\begin{oq}
Can the bound $\frac{1}{4}$ in Corollary \ref{1randcor} be increased?
\end{oq}
 
\begin{oq}
Let $X \leq\sub{T} \emptyset'$ be $1$-random. Must there be a
noncomputable (c.e.)\ set $A$ such that $ \mathcal{E}(A) \leq\sub{uc} X$? (Recall
that Corollary \ref{1randcor} gives a positive answer to the
nonuniform analog to this question.) If not, then is there any
$1$-random $X$ for which such an $A$ exists?
\end{oq}

\begin{oq}
Does Theorem \ref{2randthm} hold for uniform coarse reducibility?
\end{oq}


\begin{thebibliography}{99}

\bibitem{A} E. Astor, Asymptotic density, immunity, and randomness,
to appear.

\bibitem{BGKNT} L. Bienvenu, N. Greenberg, A. Ku{\v c}era, A. Nies,
and D. Turetsky, Coherent randomness tests and computing the
K-trivial sets, to appear

\bibitem{DH} R. G. Downey and D. R. Hirschfeldt, Algorithmic
Randomness and Complexity, Theory and Applications of Computability,
Springer, New York, 2010.

\bibitem{DJS} R. G. Downey, C. G. Jockusch, Jr., and P. E. Schupp,
Asymptotic density and computably enumerable sets, J. Math. Log 13
(2013) 1350005, 43 pp.

\bibitem{DJMS} R. G. Downey, C. G. Jockusch. Jr, T. H. McNicholl,
and P. E. Schupp, Asymptotic density and the Ershov Hierarchy,
Math. Log. Q., to appear.

\bibitem{DNWY} R. G. Downey, A. Nies, R. Weber, and L. Yu,
Lowness and $\Pi^0_2$ nullsets, J. Symbolic Logic 71 (2006)
1044--1052.

\bibitem{DI} D. D. Dzhafarov and G. Igusa, Notions of robust
information coding, to appear.

\bibitem{FMN} S. Figueira, J. S. Miller, and A. Nies, Indifferent
sets, J. Logic Comput. 19 (2009) 425--443.

\bibitem{HJ} D. R. Hirschfeldt and C. G. Jockusch, Jr., On notions of
computability theoretic reduction between $\Pi^1_2$ principles, to
appear.

\bibitem{HJMS} D. R. Hirschfeldt, C. G. Jockusch, Jr., T. McNicholl,
and P. E. Schupp, Asymptotic density and the coarse computability
bound, to appear.

\bibitem{HNS} D. R. Hirschfeldt, A. Nies, and F. Stephan, Using random
sets as oracles, J. London Math. Soc. 75 (2007) 610--622.

\bibitem{J} C. G. Jockusch, Jr., Degrees of generic sets, in
F. R. Drake and S. S. Wainer, eds., Recursion Theory: Its
Generalisations and Applications, London Math. Soc. Lecture Note
Series 45, Cambridge University Press, Cambridge, 1980, 110--139.

\bibitem{JS} C. G. Jockusch, Jr. and P. E. Schupp, Generic
computability, Turing degrees, and asymptotic density, J. London
Math. Soc. 85 (2012) 472--490.

\bibitem{KMSS} I. Kapovich, A. Myasnikov, P. Schupp, and V. Shpilrain,
Generic-case complexity, decision problems in group theory and
random walks, J. Algebra 264 (2003) 665--694.

\bibitem{K} A. Ku{\v c}era, An alternative priority-free solution to
Post's problem, in J. Gruska, B. Rovan, and J. Wiederman, eds.,
Mathematical Foundations of Computer Science 1986,
Lecture Notes in Comput. Sci. 233, Springer, Berlin, 1986, 493--500.

\bibitem{LW} L. H. Loomis and H. Whitney, An inequality related to the
isoperimetric inequality, Bull. Amer. Math. Soc 55 (1949) 961--962.

\bibitem{MSS} W. Maass, R. A. Shore, and M. Stob, Splitting properties
and jump classes, Israel J. Math. 39 (1981) 210--224.

\bibitem{M} B. Monin, Higher Computability and Randomness, PhD
dissertation, Universit\'e Paris Diderot--Paris 7, 2014.

\bibitem{N}  A. Nies, Computability and Randomness, Oxford
University Press, Oxford, 2009.

\bibitem{N2} A. Nies, Notes on a theorem of Hirschfeldt, Jockusch,
Kuyper and Schupp regarding coarse computation and $K$-triviality,
in A. Nies, ed., Logic Blog 2013, Part 5, Section 24, available at
http://arxiv.org/abs/1403.5719.

\bibitem{Sa} G. E. Sacks, Recursive enumerability and the jump
operator, Trans. Amer. Math. Soc. 108 (1963) 223--239.

\bibitem{So} R. I. Soare, Recursively Enumerable Sets and Degrees,
Perspectives in Mathematical Logic, Springer-Verlag, Berlin, 1987.

\bibitem{vL} M. van Lambalgen, The axiomatization of randomness,
J. Symbolic Logic 55 (1990) 1143--1167.

\bibitem{Y} L. Yu, Lowness for genericity,  Arch. Math. Logic 45
(2006) 233--238.


\end{thebibliography}
\end{document}